\newif\ifarxiv\arxivfalse
			\let\cl@chapter\undefined
\def\csname ver@subfig.sty\endcsname{}
		\newcommand\mynum[1]{$ˆ{\@fnsymbol{#1}}$}
		\newcommand\D[1][h]{%
			\@ifnextchar_{\operatorname D}{%
				\ifstrempty{#1}{\operatorname D}{\operatorname D_{#1}}%
			}
		}
\Crefname{equation}{}{}
\Crefname{item}{}{}{}
\newcommand{\TheKeywords}{%
	Nonlinear system of equations, 
        Inexact Levenberg-Marquardt method,
        H\"{o}lder metric subregularity,
        Local superlinear convergence,
        Global convergence,
        Complexity analysis.%
}
\newcommand{\TheSubjclass}{%
        34A34, %{Nonlinear equations and systems, general}
	65K05, %{Math. Prog. Algorithm}
        90C30, %{Nonlinear Prog.}
        93E24. %{Least squares and related methods}
}
\newcommand{\TheTitle}{%
	Inexact Levenberg\textendash Marquardt methods under H\"{o}lder metric subregularity%
}
\newcommand{\TheShortTitle}{%
	Inexact Levenberg\textendash Marquardt methods under H\"{o}lder metric subregularity%
}
\newcommand{\TheShortAuthor}{%
	B. Symoens, M. Rahimi, and M. Ahookhosh%
}
\newcommand{\TheFunding}{%
	MR and MA acknowledge the support by the \emph{Research Foundation Flanders (FWO)} research project G081222N and
	\emph{UA BOF DocPRO4 projects} with ID 46929 and 48996.
}
\newcommand{\TheAddressUA}{%
		Department of Mathematics, University of Antwerp, Middelheimlaan 1, B-2020 Antwerp, Belgium}
\newcommand{\TheAbstract}{%

   This paper investigates two inexact Levenberg-Marquardt (LM) methods for solving systems of nonlinear equations. Both approaches compute approximate search directions by solving the LM linear system inexactly, subject to specific residual-based conditions. The first method uses an adaptive scheme to update the LM parameter, and we establish its local superlinear convergence under Hölder metric subregularity and local Hölder continuity of the gradient. The second method combines an inexact LM step with a nonmonotone quadratic regularization strategy. For this variant, we prove global convergence under the assumption of Lipschitz continuous gradients and derive a worst-case global complexity bound, showing that an approximate stationary point can be found in $\mathcal{O}(\epsilon^{-2})$ function and gradient evaluations. Finally, we justify the use of the LSQR algorithm for efficiently solving the linear systems involved, which is used in our numerical experiment on several nonlinear systems, including those appearing in real-world biochemical reaction networks, monotone and nonlinear equations, and image deblurring problems.
}
\begin{document}

\ifarxiv
	\title[\TheShortTitle]{\TheTitle}
	\author[\TheShortAuthor]{%
		Bas Symoens\textsuperscript{1},\ 
		Morteza Rahimi\textsuperscript{2}.\ 
		Masoud Ahookhosh\textsuperscript{2},\ and\ 

	}
	\thanks{\textsuperscript{1}\TheAddressKU}
	\thanks{\textsuperscript{2}\TheAddressMons}
	\thanks{\TheFunding}
	\keywords{\TheKeywords}
	\subjclass{\TheSubjclass}

% comment

		\begin{abstract}
			\TheAbstract
		\end{abstract}

		\maketitle
        
\else

	\journalname{}

	\title{\TheTitle\thanks{\TheFunding}}
	\titlerunning{\TheShortTitle}

	\author{%
            Bas Symoens\and
            Morteza Rahimi\and
		Masoud Ahookhosh%
	}
	\authorrunning{\TheShortAuthor}
    
    \institute{% 
		B. Symoens, M. Rahimi, and M. Ahookhosh
		\at
        \TheAddressUA.
        {\tt
            \{%
            \href{mailto:bas.symoens@uantwerpen.be}{bas.symoens},% 
            \href{mailto:morteza.rahimi@uantwerp.be}{morteza.rahimi},%
            \href{mailto:masoud.ahookhosh@uantwerp.be}{masoud.ahookhosh}%
            \}%
            \href{mailto:bas.symoens@uantwerpen.be, morteza.rahimi@uantwerp.be, masoud.ahookhosh@uantwerp.be}{@uantwerp.be}%
        }%
    }%

	\maketitle

	\begin{abstract}
		\TheAbstract
		\keywords{\TheKeywords}%
		\subclass{\TheSubjclass}%
	\end{abstract}
\fi

\vspace{-5mm}
%%%%%%%%%%%%%%%%%%%%%%%%%%%%%%%%%%%%%%%%%%%%%%%%%%%%%%%%%%%%%%%%%%%%%%%%%%%%

\section{Introduction}\label{sec:Introduction}

In this paper, we consider solving the system of nonlinear equations  
\begin{equation}
    F(x) = 0, \label{eq:nonequa}
\end{equation}
where \(F: \mathbb{R}^m \to \mathbb{R}^n\) is continuously differentiable and the solution set \(\Omega := \{x \in \mathbb{R}^m : F(x) = 0\}\) is nonempty. These problems appear in numerous scientific and engineering fields such as physics~\cite{eilenberger2012solitons,hasegawa2012plasma}, fluid mechanics~\cite{whitham2011linear}, chemical kinetics~\cite{aragon2015globally,aragon2018accelerating}, and applied mathematics (notably in solving differential equations)~\cite{ortega2000iterative}.

A classical strategy for tackling these problems is to minimize the nonlinear least-squares merit function
\begin{equation*}
\min_{x \in \mathbb{R}^m} \psi(x):= \frac{1}{2} \|F(x)\|^2,
\label{eq:meritfunc}
\end{equation*}
using iterative methods such as Newton, quasi-Newton, Gauss-Newton, adaptive regularization, and Levenberg-Marquardt algorithms~\cite{bellavia2010convergence,fischer2016globally,nocedal2006numerical,ortega2000iterative,yuan2015recent}.
Among these, the Levenberg-Marquardt (LM) method interpolates between gradient descent and Gauss–Newton by computing a search direction \(d_k\) through the subproblem
\begin{equation}
\min_{d \in \mathbb{R}^m} \phi_k(d) := \left\|\nabla F(x_k)^T d + F(x_k)\right\|^2 + \mu_k \|d\|^2,
\label{eq:def_phi_k}
\end{equation}
where \(\nabla F(x_k)\in \R^{m\times n}\) is the Jacobian of \(F\) at \(x_k\) and \(\mu_k > 0\) is a regularization parameter. This corresponds to finding a solution to the linear system 
\begin{equation}
\left(\nabla F(x_k) \nabla F(x_k)^T + \mu_k I\right) d_k = - \nabla F(x_k) F(x_k),
\label{eq:LM_direction}
\end{equation}
where $I\in \R^{m\times m}$ stands for the identity matrix. Then, the iteration is updated by \(x_{k+1}:=x_k+d_k\) and this process continues until a stopping criterion is met.
The choice of the regularization parameter \(\mu_k\) in~\eqref{eq:LM_direction} crucially affects the method’s performance; cf. \cite{izmailov2014newton,kelley1999iterative}. It is observed that with an appropriate choice of the parameter $\mu_k$, the LM method transitions between the gradient descent behavior when the iteration is far from the solution $x^*\in \Omega$, and that of the Gauss-Newton method as the iteration approaches $x^*$. Recently, in \cite{ahookhosh2019local}, the authors proposed the LMAR method with the adaptive regularization parameter
\begin{equation}
\mu_k := \xi_k \|F(x_k)\|^\eta + \omega_k \|\nabla F(x_k) F(x_k)\|^\eta,
\label{eq:muk}
\end{equation}
where \(\eta > 0\), \(\xi_k \in [\xi_{\min}, \xi_{\max}]\), and \(\omega_k \in [0, \omega_{\max}]\) for some positive constants \(\xi_{\min}, \xi_{\max}, \omega_{\max}\).

In recent decades, the use of error bounds to analyze the convergence rates of optimization methods has become a well-established technique. Under the assumption that \(\nabla F(x^*)\) is nonsingular, the LM method exhibits local quadratic convergence, and \(x^*\) is locally unique; see, e.g., \cite{bellavia2015strong,kanzow2005withdrawn,yamashita2001rate}. However, this is a restrictive assumption that often does not hold in practice.
This challenge can be addressed by employing \emph{local error bounds} \cite{ahookhosh2019local,behling2013effect,bellavia2015strong,fan2006convergence,fan2012modified,fan2005quadratic,yamashita2001rate}, which establish a quantitative relationship between the distance of iterations to the solution set and a computable residual.
In particular, a mapping \(F\) is said to be \emph{H\"{o}lder metric subregular} of order \(\delta\in (0,1]\) around \((x^*,0)\in \Omega\times\R^n\), if there exist some constants \(\beta > 0\) and \(r > 0\) such that
\begin{equation}
\beta \, \mathrm{dist}(x,\Omega) \le \|F(x)\|^\delta, \quad \forall x \in \mathbb{B}(x^*, r),
\label{eq:leb0}
\end{equation}
in which \(\mathrm{dist}(x,\Omega) = \inf_{y \in \Omega} \|x - y\|\) and \(\mathbb{B}(x^*,r)\) is a closed ball centered on \(x^*\) of radius \(r\). This condition implies that sufficiently small residuals ensure proximity to the solution set, thereby justifying the use of residual norms as a stopping criterion. Note that the nonsingularity of \(\nabla F(x^*)\) implies~\eqref{eq:leb0} with order $\delta=1$, the converse does not need to hold, allowing \(\Omega\) to be nonunique locally, a common situation in applications (see Section~\ref{sec.numapp}). This makes the local error bound a strictly weaker and more flexible assumption than nonsingularity.
In~\cite{yamashita2001rate}, the local quadratic convergence of the LM method was studied for \(\mu_k = \|F(x_k)\|^2\) and
under metric subregularity (\(\delta=1)\).  
Assuming H\"{o}lder metric subregularity, the local convergence of LMAR with different parameter $\mu_k$ has been studied in~\cite{ahookhosh2019local} and \cite{guo2015solving,zhu2016improved}.

In many applications, finding an initial point \(x_0\) sufficiently close to the solution \(x^*\) to guarantee fast convergence of the LM method is challenging, and in some cases, impossible. In the absence of such proximity, convergence of the LM method is not guaranteed, i.e., additional conditions are required to ensure global convergence. To address this issue, a globalization technique is typically incorporated in combination with the LM direction \(d_k\); see \cite{ahookhosh2020finding,ahookhosh2015globally,yamashita2001rate}.
An example of such a technique is \emph{quadratic regularization} where the adaptive parameter \(\mu_k\) is updated using a ratio 
\begin{equation}\label{eq:retiohat}
\overline{\rho}_{k}:=\frac{D_{k}-\psi\left(x_{k}+d_{k}\right)}{q_{k}(0)-q_{k}\left(d_{k}\right)},
\end{equation}
in which $D_{k}\geq \psi(x_k)$ is a nonmonotone term, and 
$q_{k}(d):=\frac{1}{2}\left\Vert\nabla F\left(x_{k}\right)^{T} d + F\left(x_{k}\right)\right\Vert^{2}.$
This ratio, as a measurement, plays a key role in selecting a direction that better aligns the model with the objective function, ensuring the global convergence of the LM  method to a stationary point of merit function $\psi$; see \cite{ahookhosh2012class,ahookhosh2020finding,gu2008incorporating}.

The key problem with the practical application of LM methods is the computational cost of solving the subproblem \eqref{eq:LM_direction}, especially when the dimension of the problem is large, and when the Jacobian matrix $\nabla F(x_k)$ is ill-conditioned. Moreover, investing substantial computational effort to compute a highly accurate search direction may be unreasonable in the early iterations, when the current iterate $x_k$ is still far from the solution. This motivates the quest for applying inexact techniques to solve this subproblem; see the early work \cite{dan2002convergence} followed by \cite{fan2004inexact,fan2011convergence,fischer2010inexactness,bao2019modified,wang2021convergence} under error bounds and singular value analysis. Recently, authors of~\cite{yin2024modified} investigated the convergence of a line search Inexact LM (ILM) method with $\mu_k=\xi_k\|F(x_k)\|^\eta$ under H\"{o}lder metric subregularity and local H\"{o}lder gradient continuity. These reasons are the primary motivation for our study to generate inexact variants of LM methods (cf. \cite{ahookhosh2019local,ahookhosh2020finding}) for large-scale problems under H\"{o}lder metric subregularity.

\subsection{{\bf Motivating large-scale nonlinear systems: biochemical reaction networks}}\label{sec:bio}

Biochemical reaction networks constitute a representative application in which systems of nonlinear equations arise naturally during the modeling process~\cite{fleming2012mass}. Consider a system with \(m\) molecular species involved in \(n\) reversible \emph{elementary reactions},
with \(m < n\). Let \(c \in \mathbb{R}_{++}^m\) denote the vector of molecular species concentrations. The deterministic dynamics of species concentrations are governed by
\begin{equation*}
\frac{dc}{dt} \equiv -f(c):= N \bigg( \exp\left(\ln(k_{f}) + F^{T} \ln(c)\right) - \exp\left(\ln(k_{r}) + R^{T} \ln(c)\right) \bigg),
\label{eq:dcdt2}
\end{equation*}
where \(N = R - F\) is the net \emph{stoichiometric matrix} (see, e.g.,\cite{ghaderi2021mathematical,ghaderi2020structural}), formed from the forward and reverse stoichiometric matrices \(F, R \in \mathbb{Z}_+^{m \times n}\) encoding the structure of the reactions, and \(k_f, k_r \in \mathbb{R}^n_{+}\) are the \emph{elementary kinetic parameters}.

A concentration vector \(c^* \in \mathbb{R}_{++}^m\) is called a \emph{steady state} if \(f(c^*):= -\frac{dc}{dt}(c^*) = 0\),
that is, the net rate of change in all species concentrations is zero. Identifying such steady states, whose existence is guaranteed under mild assumptions~\cite{gevorgyan2008detection,fleming2016conditions}, is fundamental to understanding the long-term behavior and functionality of biochemical networks.
Mathematically, this task reduces to solving the system of nonlinear equations
\begin{equation*}
F(x) := 
\begin{bmatrix}
\left[-\overline{N}\,|\, \overline{N}\right] \exp\left(k + [F\,|\,R]^{T} x\right) \\
L\, \exp(x) - l_0
\end{bmatrix}
= 0, \label{eq:steadyStateEquation}
\end{equation*}
where \(\overline{N}\) is a full-rank submatrix of \(N\) with ${\rm rank}(\overline{N})={\rm rank}(N)=\bar{r}$, \(k := [\ln(k_{f})^{T},\, \ln(k_{r})^{T}]^{T} \in \mathbb{R}^{2n}\), $x:=\ln(c)\in \R^m$, \(L \in \mathbb{R}^{(m - \bar{r}) \times m}\) is a basis for the left null space of \(N\), and \(l_0 \in \mathbb{R}_{++}^{m - \bar{r}}\) is a constant vector~\cite{haraldsdottir2016identification}.
In this setting, the function \(F\) is H\"{o}lder metrically subregular at \((x^*, 0)\), and the objective function \(\psi(x) := \frac{1}{2} \|F(x)\|^2\) is real analytic; see~\cite[Propositions~2.2.2 and 2.2.8]{krantz2002primer}.

\vspace{-3mm}
%%%%%%%%%%%%%%%%%%%%%%%%%%%%%%%%%%%%%%%%%%%%%%%%%%%%%%%%%%%%%%%%%%%%%%

\subsection{{\bf Contributions}}

Our contributions are threefold:

\vspace{-1mm}
\begin{description}
    \item[{\bf (i)}] {\bf Local convergence of ILLM method.}
        We propose an ILM method with adaptive regularization $\mu_k$, defined in \eqref{eq:muk} -- referred to as ILLM (\Cref{ALGORITHM-2:illm}) -- for computing approximate zeros of mappings that are H\"{o}lder metrically subregular. In this approach, the linear system \eqref{eq:LM_direction} is solved inexactly, yielding a direction $d_k$, where the level of inexactness is controlled by an upper bound on the residual. We show that ILLM attains a locally superlinear convergence rate under the assumptions of H\"{o}lder metric subregularity and local H\"{o}lder continuity of the gradient. \vspace{-2mm}
    \item[{\bf (ii)}] {\bf Global convergence of ILMQR method.}
        Building on the previous framework, we incorporate a nonmonotone quadratic regularization strategy to update the parameter \(\mu_k\), while maintaining control over the inexactness. This results in the ILM method with nonmonotone quadratic regularization -- referred to as ILMQR (\Cref{ALGORITHM-3:ILMQR}). We establish the global convergence of this method and provide a complexity analysis under Lipschitz continuity of the gradient and level boundedness of the merit function. \vspace{-2mm}
    \item[{\bf (iii)}] {\bf Numerical experiments.}
        We initially motivate the use of the LSQR algorithm, which enables efficient solution of the linear systems.
        Then, we validate the proposed methods in a variety of nonlinear systems, including models arising from biochemical reaction networks, monotone equations, and other nonlinear mappings. The methods exhibit superior performance on biochemical reaction network problems and demonstrate promising behavior across all other problems tested. Additionally, we apply our methods to an image deblurring problem, illustrating their capability to accurately reconstruct blurred and noisy images compared to some existing solvers.
\end{description}

\vspace{-6mm}
%%%%%%%%%%%%%%%%%%%%%%%%%%%%%%%%%%%%%%%%%%%%%%%%%%%%%%%%%%%%%%%%%%%%%%

\subsection{{\bf Organization}}

The remainder of the paper is organized as follows. Section~\ref{sec:prelim} introduces the notation and preliminaries. In Section~\ref{sec:Local}, we present the ILLM method and analyze its local convergence properties. Section~\ref{sec.ILMQR} introduces the ILMQR method, along with proofs of its global convergence and complexity. Section~\ref{sec.numapp} discusses iterative linear solvers and presents numerical experiments on biochemical reaction networks, (non)monotone mappings, and image deblurring. Finally, Section~\ref{sec:conclusion} summarizes our findings.

%%%%%%%%%%%%%%%%%%%%%%%%%%%%%%%%%%%%%%%%%%%%%%%%%%%%%%%%%%%%%%%%%%%%%%%%%%%%%%%%%%%%%%%%%
%%%%%%%%%%%%%%%%%%%%%%%%%%%%%%%%%%%%%%%%%%%%%%%%%%%%%%%%%%%%%%%%%%%%%%%%%%%%%%%%%%%%%%%%%

\section{{\bf Preliminaries}}\label{sec:prelim}

We begin this introductory section by reviewing some basic concepts and standard notation.
Given vectors $x, y \in \mathbb{R}^m$, we use $x^T$ to denote the transpose of $x$ and $x^T y$ for the inner product of $x$ and $y$. The Euclidean norm is defined as $\Vert x\Vert =\sqrt{x^Tx}$.
The closed ball centered on $x$ with radius $r > 0$ is denoted by $\mathbb{B}(x,r) := \{z \in \mathbb{R}^m: \Vert z-x\Vert \leq r\}$ and the distance from a point $x$ to a set $\Omega \subseteq \mathbb{R}^m$ is defined as $\mathrm{dist}(x,\Omega):=\inf\{\|x-z\| : z\in \Omega\}$.
Moreover, for each matrix $A = [a_{ij}] \in \mathbb{R}^{m\times n}$, we apply the induced matrix 2-norm, defined as $\Vert A\Vert :=\max_{\|x\|=1} \| Ax\|$.

We now recall the notions of H\"{o}lder metric subregularity and (local) H\"{o}lder gradient continuity. Let $F:\mathbb{R}^{m}\rightarrow\mathbb{R}^{n}$ be a continuously differentiable mapping.

\begin{definition}
\label{def:hmsr}
The mapping $F:\mathbb{R}^{m}\rightarrow\mathbb{R}^{n}$
is said to be H\"{o}lder metrically subregular of order $\delta\in (0,1]$
in $(\bar{x},\bar{y})$ with $\bar{y}=F(\bar{x})$ if there exist
some constants $r>0$ and $\beta>0$ such that
\[
\beta\,\mathrm{dist} \left(x,F^{-1}(\bar{y})\right)\leq\|\bar{y}-F(x)\|^{\delta},\quad\forall x\in\mathbb{B}(\bar{x},r).
\]
\end{definition}
For any solution $x^{*}\in\Omega$ of the system of nonlinear equations \eqref{eq:nonequa},
the H\"{o}lder metric subregularity of $F$ at $(x^{*},0)$ reduces to
\begin{equation}
\beta\,\mathrm{dist}(x,\Omega)\leq\|F(x)\|^{\delta},\quad\forall x\in\mathbb{B}(x^{*},r).\label{eq:errorBound}
\end{equation}
Therefore, this property provides an upper bound for the distance from
any point sufficiently close to the solution $x^{*}$ to the nearest
zero of the function.
%%%%%%%%%%%%%%
\begin{remark}
    There are several key points regarding H\"{o}lder metric subregularity worth highlighting.
    \begin{enumerate}[label=\textbf{\arabic*)}]
        \item[{\bf (a)}] 
            This property can also be defined analogously for set-valued mappings; see, e.g.,~\cite{kruger2015error}.
    
        \item[{\bf (b)}]  
            H\"{o}lder metric subregularity at $(x^{*},0)$ is also referred to as a
            \lq\lq H\"{o}lderian local error bound'' \cite{ngai2015global,vui2013global}.
            In the special case $\delta = 1$, it coincides with the standard notions of \lq\lq local error bound'' and \lq\lq metric subregularity''.
        \item[{\bf (c)}]  
            It is known that H\"{o}lder metric subregularity is closely related to the \L ojasiewicz inequalities\footnote{For definition of the \L ojasiewicz inequalities see \cite{lojasiewicz1993geometrie,kurdyka1998gradients}.}.
            In fact, the merit function $\psi(\cdot)=\frac{1}{2}\|F(\cdot)\|^{2}$ satisfies the \L ojasiewicz inequality if and only if the mapping $F$ is H\"{o}lder metrically subregular at $(x^{*},0)$; see \cite{ahookhosh2019local}.
            Moreover, Stanis\l{}aw \L{}ojasiewicz proved that real analytic functions, which commonly arise in applications, satisfy this property \cite{law1965ensembles}.
            Hence, a broad class of functions admits H\"{o}lder metric subregularity.
    
        \item[{\bf (d)}]  
            It is worth emphasizing that H\"{o}lder metric subregularity is weaker than the local error bound condition, when $\delta=1$; see \cite[Example 1]{ahookhosh2019local}.
            According to the Lyusternik\textendash Graves theorem (see, e.g.,~\cite[Theorem~5D.5]{dontchev2009implicit} or \cite[Theorem~1.57]{mordukhovich2006variational}), the nonsingularity of $\nabla F(x^{*})$ is equivalent to the \emph{metric regularity} of $F$ around $(x^*,0)$, which entails the existence of some positive constants $\beta$, $r$ and $s$ such that
            \[
            \beta\mathrm{dist}\left(x,F^{-1}(y)\right)\leq\|y-F(x)\|,\quad\forall x\in\mathbb{B}(x^{*},r),\forall y\in\mathbb{B}(0,s).
            \]
            Fixing $y=0$ yields the metric subregularity condition~\eqref{eq:errorBound} with $\delta=1$.
            Furthermore, by shrinking $r$ if necessary to ensure $\|F(x)\|<1$,
            this condition implies \eqref{eq:errorBound} for any $\delta\in(0,1]$.
    \end{enumerate}
\end{remark}
%%%%%%%%%%%%

Let $S\subseteq \R^m$ be a nonempty convex set and $\nu\in (0,1]$. The mapping $F:\mathbb{R}^{m}\rightarrow\mathbb{R}^{n}$
is said to be H\"{o}lder gradient continuous of order $\nu$ on \(S\) if there exists some modulus $L_H>0$ such that
$$\|\nabla F(y) - \nabla F(x)\| \leq L_H\|y-x\|^\nu,\quad\quad\forall x,y \in S,$$
which is equivalent to the H\"olderian condition
\begin{equation}\label{eq_Holderian_inequality}
    \left\Vert F(y)-F(x)-\nabla F(x)^T(y-x)\right\Vert \leq \frac{L_H}{1+\nu} \|y-x\|^{1+\nu},\quad \forall x,y\in S;
\end{equation}
see, \cite{yin2024modified,ahookhosh2019local}. For a given point $\overline{x}\in\R^m$, the mapping $F$ is said to be locally H\"{o}lder gradient continuous of order $\nu$ at $\overline{x}$ if there exists a radius $r>0$ such that $F$ is H\"{o}lder gradient continuous on \(\mathbb{B}(\overline{x},r)\). Moreover, the mapping $F$ is said to be Lipschitz gradient continuous if it is H\"{o}lder gradient continuous of order $\nu=1$ on $\R^m$.

%%%%%%%%%%%%%%%%%%%%%%%%%%%%%%%%%%%%%%%%%%%%%%%%%%%%%%%%%%%%%%%%%%%%%%%%%%%%%%%%%%%%%%%%%
%%%%%%%%%%%%%%%%%%%%%%%%%%%%%%%%%%%%%%%%%%%%%%%%%%%%%%%%%%%%%%%%%%%%%%%%%%%%%%%%%%%%%%%%%
\section{Local convergence of inexact Levenberg-Marquardt method}\label{sec:Local}
Let us begin by introducing ILLM and establishing its local convergence, which is the main contribution of this section.
By assuming certain properties, we aim to prove that this method converges superlinearly near a solution.

We present Algorithm ILLM for solving the nonlinear system~\eqref{eq:nonequa} through iterative refinement. The method combines the LM framework with an adaptive damping strategy and inexact linear solves to improve both robustness and computational efficiency.
In fact, instead of exactly solving the linear system~\eqref{eq:LM_direction}, ILLM computes an approximate solution $d_k$, referred to as the ILM direction, thereby reducing computational cost. 
This approximation is controlled by a residual condition to ensure sufficient accuracy.
Specifically, the residual
\begin{equation}\label{eq:LM_residual}
    \begin{array}{ll}
         r_{k}=r(x_k)\hspace{-2.5mm}
         &:= \left(\nabla F(x_{k})\nabla F(x_{k})^{T}+\mu_{k}I\right)d_{k}+\nabla F(x_{k})F(x_{k})         
         \hspace{0.9mm}= H_{k}d_{k} +\nabla \psi(x_{k}),
    \end{array}
\end{equation}
is required to satisfy the residual condition $\Vert r_{k}\Vert\leq\tau\mu_k\|d_k\|$ for a fixed $\tau\in(0,\tfrac{1}{2})$, where
$$H_{k}=H(x_{k}):=\nabla F(x_{k}) \nabla F(x_{k})^{T}+\mu_{k} I.$$
To summarize, our ILLM method in each iteration, first computes $\mu_{k}$ using \eqref{eq:muk}, and next, finds the direction $d_{k}$ as an approximate solution of the linear system \eqref{eq:LM_direction}, provided that the residual $r_{k}$ \eqref{eq:LM_residual} satisfies the condition $\Vert r_{k}\Vert\leq\tau\mu_k\|d_k\|$.
So, it updates the current step by moving in the direction $d_{k}$, which means $x_{k+1}=x_{k}+d_{k}$.
Eventually, after each update, it checks if a stopping criterion is met. The following is the process of ILLM:
%%%%%%%%%%%%%%%%%%%%%%
\RestyleAlgo{boxruled}
\begin{algorithm}[ht!]
    \caption{ILLM (Inexact Local Levenberg-Marquardt algorithm)}\label{ALGORITHM-2:illm}
    \DontPrintSemicolon
    \KwIn{$x_{0} \in \mathbb{R}^{m},\,\xi_{\max}>\xi_{\min}\geq 0,\, \omega_{\max}>\omega_{\min}\geq 0$ with $\xi_{\min}+\omega_{\min}>0,\, \eta>0,\, \tau\in(0,\tfrac{1}{2})$;}
    \Begin
    {
        $k:=0$;\;
        \While {$\|F(x_{k})\|>0$}
        {
            Choose $\xi_{k}\in[\xi_{\min}, \xi_{\max}],\, \omega_{k} \in[\omega_{\min},\omega_{\max }]$;\;
            Calculate $\mu_k$ by \eqref{eq:muk};\;
            Find the approximate solution $d_k$ by solving \eqref{eq:LM_direction} inexactly with $\|r_{k}\|\leq\tau\mu_k\|d_k\|$;\; 
            $x_{k+1}=x_{k} + d_{k}$;\;
            $k=k+1$;
        }
        \KwOut{$x_{k}$;}
    } 
\end{algorithm}
%%%%%%%%%%%%%%%

Now that we have presented the algorithm, let us focus on the conditions required for local convergence. These assumptions are used to prove the superlinear convergence of the method under certain constraints.
%%%%%%%%%%%%%%%%%%%%%%%%%%%%%%%%%%
\begin{ass}\label{as:local}
    Let us consider the system \eqref{eq:nonequa}. Given $x^*\in \Omega$, assume that
    \begin{enumerate}[label=\textbf{(A\arabic*)}]
        \item \label{as:local1}
            The mapping $F$ is H\"{o}lder metric subregular of order $\delta \in (0,1]$ at $(x^{*},0)$.
        
        \item \label{as:local2}
            The mapping $F$ is locally H\"{o}lder gradient continuous of order $\nu \in (0,1]$ at $x^{*}$.
    \end{enumerate}
\end{ass}
%%%%%%%%%%%%%%%%

\begin{remark}\label{rem:parameters}
    Let \cref{as:local} hold. Since $F$ is continuously differentiable, it is locally Lipschitz continuous. Therefore, there exist a sufficiently small radius $0<r<2$ and constants $\beta,L_{F},L_{H}>0$ such that: $(i)$ the mapping $F$ is Lipschitz on $\mathbb{B}(x^{*},r)$ with modulus $L_{F}$; $(ii)$ the mapping $F$ is  H\"{o}lder gradient continuous of order $\nu$ on $\mathbb{B}(x^{*},r)$ with modulus $L_{H}$; and $(iii)$ the mapping $F$ is H\"{o}lder metric subregular of order $\delta \in (0,1]$ at $(x^{*},0)$ with constants $r,\beta$, i.e., error bound \eqref{eq:errorBound} holds. Based on these parameters, we define the following constants:
    \begin{gather*}
        \tilde{r}\coloneqq 
        \begin{cases}
            \frac{r}{2} & \text{if } \xi_{\min}>0,\\
            \min\left\{\frac{r}{2}, 
            \left(\frac{\beta^{\frac{2}{\delta}}(1+\nu)^2}{2L_{H}^2}\right)^{\frac{1}{2\left(1+\nu-\frac{1}{\delta}\right)}}\right\}\hspace*{1cm} & \text{if $\xi_{\min}=0$},
        \end{cases}\hspace{1cm}
        \delta_1 \coloneqq 
        \begin{cases}
            \min\{1+\nu-\frac{\eta}{2\delta},1\} 
            & \mathrm{if\ }\xi_{\min}>0,\\[2mm]
            \min\left\{1+\nu-\frac{\eta}{2}\left(\frac{2}{\delta}-1\right),1\right\}\hspace*{.7cm} & \mathrm{if\ }\xi_{\min}=0,
        \end{cases}\\
            c_1 \coloneqq 
        \begin{cases}
            \frac{1}{1-\tau}\left(\frac{L_H}{2(1+\nu)\xi_{\min}^{\nicefrac{1}{2}}\beta^{\nicefrac{\eta}{(2\delta)}}} + 1\right)\hspace*{.7cm} & \mathrm{if\ }\xi_{\min}>0,\\[4mm]
            \frac{1}{1-\tau}
            \left(
            \frac{L_{H} 2^{\eta}}{2(1+\nu)\omega_{\min}^{\nicefrac{1}{2}}\beta^{\nicefrac{\eta}{\delta}}}
            + 1 \right) & \mathrm{if\ } \xi_{\min}=0,
        \end{cases}\hspace{1cm}
        \delta_2 \coloneqq 
        \min
        \left\{
        \delta\delta_1(1+\nu),
        \delta\left(\delta_1+\tfrac{\eta}{2}\right)
        \right\},
    \end{gather*}
    \begin{gather*}
        c_2:=\tfrac{1}{\beta}
        \left(
        \tfrac{c_1^{1+\nu}L_H}{1+\nu}
        + 
        \tfrac{\tau}{2}c_1(\xi_{\max} + \omega_{\max}L_F^{\eta})^{\tfrac{1}{2}}L_F^{\tfrac{\eta}{2}}
        +
        \left(\tfrac{L_H^2}{(1+\nu)^2} + (\xi_{\max}+\omega_{\max}L_F^\eta)L_F^\eta\right)^{\tfrac{1}{2}}
        \right)^{\tfrac{1}{\delta}}.
    \end{gather*}
\end{remark}
%%%%%%%%%%%%

The following preparatory lemma combines three well-known results that are essential for subsequent analysis. For details of the proof, see \cite[Lemma 2.2]{bao2019modified}.
%%%%%%%%%%%%%
\begin{lemma}\label{lem:matrix}
    Let $A$ be an $m\times n$ matrix and $\mu > 0$ be a constant. Then, the following inequalities hold:
    \begin{enumerate}[label=(\alph*)]
        \item $\|(A A^{T}+\mu I)^{-1}\|\leq \frac{1}{\mu}$;\label{eq:matrix1}
        \item $\|(A A^{T}+\mu I)^{-1}A\| \leq \frac{1}{2\sqrt{\mu}}$;\label{eq:matrix2}
        \item  $\|(A A^{T}+\mu I)^{-1}A A^{T}\|\leq 1$.\label{eq:matrix3}
    \end{enumerate}
\end{lemma}
%%%%%%%%%%%

%%%%%%%%%%%%%%%%%%%%%%%%%%%%%%%%%%%%%%%%%%%%
The following result plays a key role in the convergence analysis of ILLM throughout the remainder of this section. Specifically, it provides an upper bound on the norm of the ILM direction in terms of the distance from the current iterate to the solution set. 
%%%%%%%%%%%%%%%%%%%%%%%%%%%%%%%%%%%%%%%%%%
\begin{lemma}\label{lem:local_convergence}
    Let \cref{as:local} hold. Let $\eta\in \big(0,2\delta(1+\nu)\big)$ if $\xi_{\min}>0$,
    and $\delta>\tfrac{1}{1+\nu}$ and $\eta\in\left(0,\frac{2\delta(1+\nu)}{2-\delta}\right)$ if $\xi_{\min}=0$.
    If $x_k\in \mathbb{B}(x^*,\tilde{r})$ is an iteration generated by ILLM such that
    $x_k\not\in\Omega$, then
    \begin{equation}\label{eq:lem_dk}
        \|d_k\| \leq c_1\mathrm{dist}(x_k,\Omega)^{\delta_1},
    \end{equation}
    where the parameters $\tilde{r},\delta_1,c_1$ are the constants defined in \Cref{rem:parameters}.
\end{lemma}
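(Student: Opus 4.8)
The plan is to write the inexact Levenberg--Marquardt direction in closed form, bound its three constituent pieces via \Cref{lem:matrix}, and then convert the resulting estimate into powers of $\rho_k:=\mathrm{dist}(x_k,\Omega)$ through the error bound \eqref{eq:errorBound}. First I would fix a projection $\bar x_k\in\Omega$ of $x_k$, so that $\|x_k-\bar x_k\|=\rho_k$ and $F(\bar x_k)=0$; since $x_k\in\mathbb B(x^*,\tilde r)$ with $\tilde r\le r/2$, one checks that $x_k$, $\bar x_k$ and the whole segment joining them lie in $\mathbb B(x^*,r)$, so the Lipschitz and H\"older-gradient bounds of \Cref{rem:parameters} apply there. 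From \eqref{eq:LM_residual}, $d_k=H_k^{-1}r_k-H_k^{-1}\nabla\psi(x_k)$. Setting $e_k:=F(x_k)-F(\bar x_k)-\nabla F(x_k)^T(x_k-\bar x_k)$, the H\"olderian inequality \eqref{eq_Holderian_inequality} gives $\|e_k\|\le\frac{L_H}{1+\nu}\rho_k^{1+\nu}$, and since $F(\bar x_k)=0$ we may decompose $\nabla\psi(x_k)=\nabla F(x_k)e_k+\nabla F(x_k)\nabla F(x_k)^T(x_k-\bar x_k)$. Substituting this and using \Cref{lem:matrix}(a)--(c) together with $\|r_k\|\le\tau\mu_k\|d_k\|$, the three resulting terms of $\|d_k\|$ are bounded respectively by $\tau\|d_k\|$, $\frac{1}{2\sqrt{\mu_k}}\|e_k\|$ and $\rho_k$, so that after rearranging
\[
(1-\tau)\|d_k\|\le\frac{L_H}{2(1+\nu)\sqrt{\mu_k}}\,\rho_k^{1+\nu}+\rho_k .
\]

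The remaining task is to lower bound $\mu_k$ in the two regimes of \Cref{rem:parameters}. If $\xi_{\min}>0$, then \eqref{eq:muk} and \eqref{eq:errorBound} give $\mu_k\ge\xi_{\min}\|F(x_k)\|^\eta\ge\xi_{\min}\beta^{\eta/\delta}\rho_k^{\eta/\delta}$ (in particular $\mu_k>0$, so $H_k$ is invertible and $d_k$ is well defined), whence the previous display yields $(1-\tau)\|d_k\|\le\frac{L_H}{2(1+\nu)\xi_{\min}^{1/2}\beta^{\eta/(2\delta)}}\rho_k^{1+\nu-\eta/(2\delta)}+\rho_k$. If $\xi_{\min}=0$, then $\omega_{\min}>0$ and I first need a lower bound on $\|\nabla\psi(x_k)\|$: from $\langle\nabla\psi(x_k),x_k-\bar x_k\rangle=\langle F(x_k),F(x_k)-e_k\rangle=\|F(x_k)\|^2-\langle F(x_k),e_k\rangle\ge\|F(x_k)\|\big(\|F(x_k)\|-\|e_k\|\big)$ and $\langle\nabla\psi(x_k),x_k-\bar x_k\rangle\le\|\nabla\psi(x_k)\|\rho_k$, combined with $\|F(x_k)\|\ge\beta^{1/\delta}\rho_k^{1/\delta}$ from \eqref{eq:errorBound}. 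The assumption $\delta>\tfrac1{1+\nu}$ makes the exponent $1+\nu-\tfrac1\delta$ positive, and the second branch in the definition of $\tilde r$ is exactly what forces $\|e_k\|\le\tfrac1{\sqrt2}\|F(x_k)\|$ on $\mathbb B(x^*,\tilde r)$; hence $\|\nabla\psi(x_k)\|\ge\tfrac14\beta^{2/\delta}\rho_k^{2/\delta-1}>0$ (so again $\mu_k>0$), and then $\mu_k\ge\omega_{\min}\|\nabla\psi(x_k)\|^\eta\ge\omega_{\min}(\tfrac14\beta^{2/\delta})^\eta\rho_k^{\eta(2/\delta-1)}$, which upon substitution gives $(1-\tau)\|d_k\|\le\frac{L_H 2^\eta}{2(1+\nu)\omega_{\min}^{1/2}\beta^{\eta/\delta}}\rho_k^{1+\nu-\frac\eta2(2/\delta-1)}+\rho_k$.

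To conclude, in both regimes the exponent of the first term is $1+\nu-\tfrac\eta2\theta$ with $\theta=\tfrac1\delta$ (resp. $\theta=\tfrac2\delta-1$), and the stated restriction on $\eta$ ($\eta<2\delta(1+\nu)$, resp. $\eta<\tfrac{2\delta(1+\nu)}{2-\delta}$) is precisely what makes this exponent strictly positive; by construction $\delta_1$ is the minimum of that exponent and $1$. Since $\rho_k\le\tilde r<1$, both $\rho_k^{1+\nu-\eta\theta/2}$ and $\rho_k$ are at most $\rho_k^{\delta_1}$, so factoring $\rho_k^{\delta_1}$ out of the right-hand side and dividing by $1-\tau$ leaves exactly the constant $c_1$ of \Cref{rem:parameters}, which gives \eqref{eq:lem_dk}. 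I expect the case $\xi_{\min}=0$ to be the main obstacle: the lower bound $\|\nabla\psi(x_k)\|\ge\tfrac14\beta^{2/\delta}\rho_k^{2/\delta-1}$ must be teased out by playing the error bound against the H\"older gradient estimate, and it is this step that dictates both the extra hypothesis $\delta>\tfrac1{1+\nu}$ and the particular form of the second branch of $\tilde r$.
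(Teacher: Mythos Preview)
Your proposal is correct and follows essentially the same route as the paper: decompose $d_k=H_k^{-1}r_k-H_k^{-1}\nabla\psi(x_k)$, split $\nabla\psi(x_k)$ using the projection $\bar x_k$ and the Taylor remainder $e_k$, bound the three pieces via \Cref{lem:matrix}(a)--(c) and the residual condition, and then lower bound $\mu_k$ in the two regimes to convert everything into powers of $\rho_k$. The only cosmetic difference is in Case $\xi_{\min}=0$: the paper expands $\|e_k\|^2=\|F(x_k)-\nabla F(x_k)^T(x_k-\bar x_k)\|^2$ and drops the nonnegative cross term $\|\nabla F(x_k)^T(x_k-\bar x_k)\|^2$, whereas you take the inner product $\langle\nabla\psi(x_k),x_k-\bar x_k\rangle$ and apply Cauchy--Schwarz to $\langle F(x_k),e_k\rangle$; both arguments yield the same lower bound $\|\nabla\psi(x_k)\|\ge\tfrac14\beta^{2/\delta}\rho_k^{2/\delta-1}$ under the given choice of $\tilde r$.
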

\begin{proof}
    Invoking \Cref{lem:matrix}\ref{eq:matrix1} together with the residual condition $\Vert r_{k}\Vert\leq\tau\mu_k\|d_k\|$, it holds that
\begin{align*}
    \|d_k\| =& \left\| H_k^{-1}r_k - H_{k}^{-1} \nabla\psi(x_k)\right\| 
        \leq
        \left\|H_k^{-1}\right\|\left\|r_k\right\| + \left\|H_k^{-1}\nabla F(x_k)F(x_k)\right\| \\[1mm]
        \leq& 
        \tfrac{\|r_k\|}{\mu_k} + \left\|H_k^{-1}\nabla F(x_k)F(x_k)\right\|
        \leq
        \tau\|d_k\| + \left\|H_k^{-1}\nabla F(x_k)F(x_k)\right\|,
\end{align*}
implying
\begin{equation}
    \|d_k\| \leq \tfrac{1}{1-\tau}\left\|H_k^{-1}\nabla F(x_k)F(x_k)\right\|.
    \label{eq:dk_bound1}
\end{equation}
Let $\overline{x}_k \in \Omega$ denote the projection of $x_k$ onto $\Omega$, i.e., $\mathrm{dist}(x_k,\Omega)=\|\overline{x}_k-x_k\|$.
    Then,
    \begin{equation*}
        \|\overline{x}_k-x^*\| \leq \|\overline{x}_k - x_k\| + \|x_k -x^*\| \leq 2\|x_k -x^*\|\leq r,
    \end{equation*}
verifying $\overline{x}_k\in\mathbb{B}(x^*,r)$. 
Hence, from \Cref{lem:matrix}\ref{eq:matrix2} and \ref{eq:matrix3}, and H\"{o}lderian condition \eqref{eq_Holderian_inequality} on \(\mathbb{B}(x^*,r)\), we obtain
\begin{align*}
    \left\|H_k^{-1}\nabla F(x_k)F(x_k)\right\|
    \leq& \left\|H_k^{-1}\nabla F(x_k)\Big(F(x_k)+\nabla F(x_k)^{T}(\overline{x}_k - x_k)\Big)\right\|
    +
    \left\|H_k^{-1}\nabla F(x_k)\nabla F(x_k)^{T}(\overline{x}_k - x_k)\right\|\\[1mm]
    \leq& 
    \left\|H_k^{-1}\nabla F(x_k)\right\|\left\|F(x_k)+\nabla F(x_k)^{T}(\overline{x}_k - x_k)-F(\overline{x}_k)\right\|
    + 
    \left\|H_k^{-1}\nabla F(x_k)\nabla F(x_k)^{T}\right\|\left\|\overline{x}_k - x_k\right\|\\[1mm]
    \leq&  
    \tfrac{L_H}{2(1+\nu)\sqrt{\mu_k}}\|\overline{x}_k - x_k\|^{1+\nu} + \|\overline{x}_k - x_k\|
    = \tfrac{L_H}{2(1+\nu)\sqrt{\mu_k}}\mathrm{dist}(x_k,\Omega)^{1+\nu} + \mathrm{dist}(x_k,\Omega).
\end{align*}
Substituting this into \eqref{eq:dk_bound1} gives
\begin{equation}
    \|d_k\| \leq \tfrac{1}{1-\tau}\left(\tfrac{L_H}{2(1+\nu)\sqrt{\mu_k}}\mathrm{dist}(x_k,\Omega)^{1+\nu} + \mathrm{dist}(x_k,\Omega)\right).
    \label{eq:dk_bound2}
\end{equation}
For deriving inequality \eqref{eq:lem_dk}, two possible cases arise: (i) $\xi_{\min}>0$; (ii) $\xi_{\min}=0$.

In Case (i), by \Cref{as:local}~\ref{as:local1}, $\mu_k$ can be lower bounded as 
\begin{align*}
    \mu_k   = \xi_k\|F(x_k)\|^\eta + \omega_k \|\nabla F(x_k) F(x_k)\|^\eta
            \geq \xi_k\|F(x_k)\|^\eta 
            \geq \xi_{\min} \beta^{\eta/\delta}\mathrm{dist}(x_k,\Omega)^{\eta/\delta}.
\end{align*}
Substituting this lower bound into \eqref{eq:dk_bound2} yields
\begin{align*}
    \|d_k\|
    &\leq 
    \frac{1}{1-\tau}\left(\frac{L_H}{2(1+\nu)\xi_{\min}^{1/2}\beta^{\eta/(2\delta)}}\mathrm{dist}(x_k,\Omega)^{1+\nu-\frac{\eta}{2\delta}} + \mathrm{dist}(x_k,\Omega)\right)\\
    &\leq
    \frac{1}{1-\tau}\left(\frac{L_H}{2(1+\nu)\xi_{\min}^{1/2}\beta^{\eta/(2\delta)}} + 1\right)\mathrm{dist}(x_k,\Omega)^{\min\{1+\nu-\frac{\eta}{2\delta},1\}}, 
\end{align*}
thereby ensuring inequality \eqref{eq:lem_dk} in this case.

For Case (ii), $\xi_{\min}=0$ leads to $\omega_{\min}>0$. Thus,
\begin{align}\label{mu_lower_bound1}
    \mu_k 
    =\xi_k\|F(x_k)\|^\eta + \omega_k \|\nabla F(x_k) F(x_k)\|^\eta
    \geq \omega_{\min}\|\nabla F(x_k)F(x_k)\|^\eta.
\end{align}
Applying the properties in \Cref{as:local}, we conclude
\begin{align*}
    \frac{L^{2}_{H}}{(1+\nu)^2}\mathrm{dist}(x_k,\Omega)^{2(1+\nu)} 
    &\geq 
    \|F(x_k) + \nabla F(x_k)^T(\overline{x}_k-x_k)-F(\overline{x}_{k})\|^2\\
    &=
    \|F(x_k)\|^2 + 2(\overline{x}_k-x_k)^T\nabla F(x_k)F(x_k) + \|\nabla F(x_k)^T(\overline{x}_k-x_k)\|^2\\
    % &\geq
    % \beta^{\frac{2}{\delta}}\mathrm{dist}(x_k,\Omega)^{\frac{2}{\delta}} 
    % + 
    % 2(\overline{x}_k-x_k)^T\nabla F(x_k)F(x_k)\\
    &\geq 
    \beta^{\frac{2}{\delta}}\mathrm{dist}(x_k,\Omega)^{\frac{2}{\delta}} 
    -
    2\|\overline{x}_k-x_k\|\|\nabla F(x_k) F(x_k)\|,
\end{align*}
leading to
\begin{align*}
    \|\nabla F(x_k) F(x_k)\| 
    &\geq
    \frac{\beta^{\frac{2}{\delta}}}{2}\mathrm{dist}(x_k,\Omega)^{\frac{2}{\delta}-1} - \frac{L_H^2}{2(1+\nu)^2}\mathrm{dist}(x_k,\Omega)^{1+2\nu}\\
    &\geq
    \frac{\beta^{\frac{2}{\delta}}}{2}\mathrm{dist}(x_k,\Omega)^{\frac{2}{\delta}-1} - 
    \frac{L_H^2}{2(1+\nu)^2}\|x_k-x^*\|^{1+2\nu-\left(\frac{2}{\delta}-1\right)}\mathrm{dist}(x_k,\Omega)^{\frac{2}{\delta}-1}\\
    &\geq
    \frac{\beta^{\frac{2}{\delta}}}{2}\mathrm{dist}(x_k,\Omega)^{\frac{2}{\delta}-1} 
    - 
    \frac{L_H^2}{2(1+\nu)^2}
    \tilde{r}^{2\left( 1+\nu-\frac{1}{\delta}\right)}\mathrm{dist}(x_k,\Omega)^{\frac{2}{\delta}-1}\\
    &\geq
    \frac{\beta^{\frac{2}{\delta}}}{2}\mathrm{dist}(x_k,\Omega)^{\frac{2}{\delta}-1} 
    -
    \frac{\beta^{\frac{2}{\delta}}}{4}\mathrm{dist}(x_k,\Omega)^{\frac{2}{\delta}-1}
    =
    \frac{\beta^{\frac{2}{\delta}}}{4}\mathrm{dist}(x_k,\Omega)^{\frac{2}{\delta}-1}.
\end{align*}
Plugging this into \eqref{mu_lower_bound1}, the parameter $\mu_k$ is lower bounded as
\begin{equation*}
    \mu_k \geq \omega_{\min} \frac{\beta^{\frac{2\eta}{\delta}}}{4^\eta}\mathrm{dist}(x_k,\Omega)^{\left(\frac{2}{\delta}-1\right)\eta}.
\end{equation*}
Substituting this lower bound into \eqref{eq:dk_bound2} justifies that
\begin{align*}
    \|d_k\|
    &\leq
    \frac{1}{1-\tau}
    \left(
        \frac{L_H4^{\frac{\eta}{2}}}{2(1+\nu)\omega_{\min}^{\nicefrac{1}{2}}\beta^{\nicefrac{\eta}{\delta}}}\mathrm{dist}(x_k,\Omega)^{1+\nu-\frac{\eta}{2}\left(\frac{2}{\delta}-1\right)}
        +
        \mathrm{dist}(x_k,\Omega)
    \right)\\
    &\leq
    \frac{1}{1-\tau}
    \left(
        \frac{L_H 2^{\eta}}{2(1+\nu)\omega_{\min}^{\nicefrac{1}{2}}\beta^{\nicefrac{\eta}{\delta}}}
        +1
    \right) ~
    \mathrm{dist}(x_k,\Omega)^{\min\left\{1+\nu-\frac{\eta}{2}\left(\frac{2}{\delta}-1\right),1\right\}},
\end{align*}
adjusting our desired result.
\end{proof}
%%%%%%%%%%%

\Cref{pro:dist_decrease} indicates that the distance to the solution set contracts at a rate governed by a power of the previous distance. This reveals a locally superlinear convergence behavior of the ILLM.
%%%%%%%%%%%%%%%%%%%%%%%%%%%%%%%%%%%%
\begin{proposition}\label{pro:dist_decrease}
    Let \Cref{as:local} hold.
    Let $\eta\in (0,2\delta(1+\nu))$ if $\xi_{\min}>0$, and $\delta>\frac{1}{1+\nu}$ and $\eta\in\left(0,\frac{2\delta(1+\nu)}{2-\delta}\right)$ if $\xi_{\min}=0$.
    If $x_{k}, x_{k+1} \in\mathbb{B}(x^*,\tilde{r})$ are iterations generated by ILLM such that
    $x_k\not\in \Omega$, then
    \begin{equation}
        \mathrm{dist}(x_{k+1},\Omega)\leq c_2\mathrm{dist}(x_k,\Omega)^{\delta_2},\label{eq:dist_decrease}
    \end{equation}
    where the parameters $\tilde{r},\delta_2,c_2$ are the constants defined in \Cref{rem:parameters}.
\end{proposition}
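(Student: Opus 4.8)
The plan is to bound $\mathrm{dist}(x_{k+1},\Omega)$ via the error bound \eqref{eq:errorBound} applied at $x_{k+1}$, i.e.\ $\beta\,\mathrm{dist}(x_{k+1},\Omega)\le \|F(x_{k+1})\|^{\delta}$, and then estimate $\|F(x_{k+1})\|=\|F(x_k+d_k)\|$ from above. First I would note that since $x_k,x_{k+1}\in\mathbb B(x^*,\tilde r)$ (and $\tilde r\le r/2$), the segment between $x_k$ and $x_{k+1}$, as well as the projection $\overline x_k$, all lie in $\mathbb B(x^*,r)$, so the H\"olderian inequality \eqref{eq_Holderian_inequality} and Lipschitzness of $F$ are available. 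The key decomposition is
\[
F(x_k+d_k) = \underbrace{\bigl(F(x_k+d_k)-F(x_k)-\nabla F(x_k)^Td_k\bigr)}_{\text{Hölder remainder}} + \underbrace{\bigl(F(x_k)+\nabla F(x_k)^Td_k\bigr)}_{\text{LM model residual at }d_k}.
\]
The first term is bounded by $\tfrac{L_H}{1+\nu}\|d_k\|^{1+\nu}$ using \eqref{eq_Holderian_inequality}.

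For the second term I would exploit the (inexact) LM linear system. From \eqref{eq:LM_residual}, $r_k=\nabla F(x_k)\nabla F(x_k)^Td_k+\mu_k d_k+\nabla F(x_k)F(x_k)$, so multiplying the model residual $F(x_k)+\nabla F(x_k)^Td_k$ by $\nabla F(x_k)$ gives $\nabla F(x_k)\bigl(F(x_k)+\nabla F(x_k)^Td_k\bigr)=r_k-\mu_k d_k$. This controls only the component of the model residual in the range of $\nabla F(x_k)$; to control the full vector $F(x_k)+\nabla F(x_k)^Td_k\in\R^n$ I expect the argument to instead bound its norm directly. A cleaner route: write $\|F(x_k)+\nabla F(x_k)^Td_k\|\le \|F(x_k)+\nabla F(x_k)^T(\overline x_k-x_k)\|+\|\nabla F(x_k)^T(d_k-(\overline x_k-x_k))\|$. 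The first piece equals $\|F(x_k)+\nabla F(x_k)^T(\overline x_k-x_k)-F(\overline x_k)\|\le\tfrac{L_H}{1+\nu}\mathrm{dist}(x_k,\Omega)^{1+\nu}$, matching the constant $L_H/(1+\nu)$ in $c_2$. For the second piece one uses $\|\nabla F(x_k)^T\|\le(\|H_k\|)^{1/2}$ together with $H_k$-weighted estimates, or more plausibly estimates $\|\nabla F(x_k)^T d_k + F(x_k)\|$ through the stationarity relation $H_k d_k=r_k-\nabla\psi(x_k)$; one multiplies by $\nabla F(x_k)^T H_k^{-1}$-type quantities and invokes \Cref{lem:matrix}. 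Either way, the outcome should be a bound of the form
\[
\|F(x_k)+\nabla F(x_k)^Td_k\| \le \tfrac{L_H}{1+\nu}\mathrm{dist}(x_k,\Omega)^{1+\nu} + \tfrac{\tau}{2}\mu_k^{1/2}\|d_k\| + \text{(term)}\cdot\mathrm{dist}(x_k,\Omega),
\]
where the $\tfrac{\tau}{2}\mu_k^{1/2}\|d_k\|$ comes from $\|\nabla F(x_k)^TH_k^{-1}r_k\|\le \tfrac{1}{2\sqrt{\mu_k}}\|r_k\|\le\tfrac{\tau}{2}\sqrt{\mu_k}\|d_k\|$ via \Cref{lem:matrix}\ref{eq:matrix2} and the residual condition, and the last term from $\|\nabla F(x_k)^T H_k^{-1}\nabla\psi(x_k)\|\le \mathrm{dist}(x_k,\Omega)$ type bounds paralleling the proof of \Cref{lem:local_convergence}.

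Next I would substitute: (a) $\|d_k\|\le c_1\mathrm{dist}(x_k,\Omega)^{\delta_1}$ from \Cref{lem:local_convergence}, so the Hölder remainder is $\le\tfrac{L_H}{1+\nu}c_1^{1+\nu}\mathrm{dist}(x_k,\Omega)^{\delta_1(1+\nu)}$; and (b) the upper bound $\mu_k\le (\xi_{\max}+\omega_{\max}L_F^{\eta})L_F^{\eta}\mathrm{dist}(x_k,\Omega)^{0}$... more precisely $\mu_k=\xi_k\|F(x_k)\|^\eta+\omega_k\|\nabla F(x_k)F(x_k)\|^\eta\le(\xi_{\max}+\omega_{\max}L_F^{\eta})L_F^{\eta}\mathrm{dist}(x_k,\Omega)^{\eta\delta/\delta}$ — here I'd use $\|F(x_k)\|=\|F(x_k)-F(\overline x_k)\|\le L_F\mathrm{dist}(x_k,\Omega)$ and $\|\nabla F(x_k)\|\le L_F$ (shrinking $r$ so the Jacobian norm is bounded, which follows from continuity). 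Thus $\mu_k^{1/2}\le(\xi_{\max}+\omega_{\max}L_F^{\eta})^{1/2}L_F^{\eta/2}\,\mathrm{dist}(x_k,\Omega)^{\ldots}$, and the term $\tfrac{\tau}{2}\mu_k^{1/2}\|d_k\|$ becomes $\tfrac{\tau}{2}c_1(\xi_{\max}+\omega_{\max}L_F^\eta)^{1/2}L_F^{\eta/2}\mathrm{dist}(x_k,\Omega)^{\delta_1+\eta/2}$, reproducing the middle summand of $c_2$. Collecting the three contributions with exponents $\delta_1(1+\nu)$, $\delta_1+\tfrac{\eta}{2}$, and (from the "(term)"$\cdot\mathrm{dist}$ piece, whose coefficient is $(\tfrac{L_H^2}{(1+\nu)^2}+(\xi_{\max}+\omega_{\max}L_F^\eta)L_F^\eta)^{1/2}$, exponent $1\ge\delta_1$... actually exponent $\delta_1+\eta/2$ after the $\mu_k^{-1/2}\cdot\mu_k^{1/2}$ bookkeeping) and using $\mathrm{dist}(x_k,\Omega)\le\tilde r\le 1$ to absorb everything into the smallest exponent $\min\{\delta_1(1+\nu),\delta_1+\tfrac{\eta}{2}\}$, I get $\|F(x_{k+1})\|\le \beta c_2^{\delta}\,\mathrm{dist}(x_k,\Omega)^{\min\{\delta_1(1+\nu),\delta_1+\eta/2\}}$ up to constants matching the definition of $c_2$; raising to the power $\delta$ and dividing by $\beta$ yields \eqref{eq:dist_decrease} with $\delta_2=\min\{\delta\delta_1(1+\nu),\delta(\delta_1+\tfrac{\eta}{2})\}$ as claimed.

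The main obstacle I anticipate is the careful tracking of which norm bounds produce exactly the constant $c_2$ — in particular getting the full-space residual $\|F(x_k)+\nabla F(x_k)^Td_k\|$ (not just its $\nabla F(x_k)$-image) under control, and making sure the power-of-distance bookkeeping (exploiting $\mathrm{dist}(x_k,\Omega)\le\tilde r<1$ and, in the $\xi_{\min}=0$ case, the extra restriction on $\tilde r$ guaranteeing $\delta_1>0$) consolidates cleanly into the single exponent $\delta_2$. The structural steps — error bound at $x_{k+1}$, split into Hölder remainder plus model residual, insert \Cref{lem:local_convergence} and the $\mu_k$ two-sided bounds — are routine; the bookkeeping is where the work lies.
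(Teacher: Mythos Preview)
Your overall scaffold --- apply the error bound at $x_{k+1}$, split $F(x_k+d_k)$ into a H\"older remainder plus the model residual $F(x_k)+\nabla F(x_k)^Td_k$, and then separate the latter into an exact-LM part $F(x_k)+\nabla F(x_k)^Td_k^{LM}$ and an inexact correction $\nabla F(x_k)^TH_k^{-1}r_k$ --- matches the paper, and your treatment of the H\"older remainder and of the inexact correction (via \Cref{lem:matrix}\ref{eq:matrix2} and the residual condition, giving the $\tfrac{\tau}{2}\sqrt{\mu_k}\|d_k\|$ term) is correct.

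The genuine gap is the bound on $\|F(x_k)+\nabla F(x_k)^Td_k^{LM}\|$. Neither of your two suggested routes delivers the required exponent. The triangle inequality against $\overline x_k-x_k$ leaves you with $\|\nabla F(x_k)^T(d_k-(\overline x_k-x_k))\|$, which is only $O(\mathrm{dist}(x_k,\Omega)^{\delta_1})$ and would spoil $\delta_2$. ``Paralleling \Cref{lem:local_convergence}'' bounds $\|d_k^{LM}\|$, not the model residual; writing $F(x_k)+\nabla F(x_k)^Td_k^{LM}=(I-\nabla F(x_k)^TH_k^{-1}\nabla F(x_k))F(x_k)$ does not help either, because this operator is not small on the component of $F(x_k)$ orthogonal to the range of $\nabla F(x_k)^T$. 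Your own hesitation (``exponent $1\ge\delta_1$\ldots actually exponent $\delta_1+\eta/2$'') reflects this: an exponent~$1$ on that piece does \emph{not} consolidate into $\min\{\delta_1(1+\nu),\delta_1+\tfrac{\eta}{2}\}$ in general.

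The missing idea is the \emph{optimality} of $d_k^{LM}$ for the LM subproblem \eqref{eq:def_phi_k}: since $d_k^{LM}$ minimizes $\phi_k$,
\[
\|F(x_k)+\nabla F(x_k)^Td_k^{LM}\|^2 \le \phi_k(d_k^{LM}) \le \phi_k(\overline x_k-x_k)
= \|F(x_k)+\nabla F(x_k)^T(\overline x_k-x_k)\|^2 + \mu_k\|\overline x_k-x_k\|^2,
\]
and now the right-hand side is bounded by $\tfrac{L_H^2}{(1+\nu)^2}\mathrm{dist}(x_k,\Omega)^{2(1+\nu)}+(\xi_{\max}+\omega_{\max}L_F^\eta)L_F^\eta\,\mathrm{dist}(x_k,\Omega)^{\eta+2}$, yielding exponent $\min\{1+\nu,\,1+\tfrac{\eta}{2}\}$ and exactly the constant $\bigl(\tfrac{L_H^2}{(1+\nu)^2}+(\xi_{\max}+\omega_{\max}L_F^\eta)L_F^\eta\bigr)^{1/2}$ that you recognized in $c_2$ but could not place. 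With this in hand the remaining bookkeeping is straightforward.
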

\begin{proof}
    Let $\overline{x}_k\in\Omega$ denote the projection of $x_k$ onto $\Omega$, i.e., $\mathrm{dist}(x_k,\Omega)=\|\overline{x}_k-x_k\|$.
    It follows that
    \begin{align}
    \begin{split}\label{eq:muk_upperbound}
        \mu_k   &=
                \xi_k\|F(x_k)-F(\overline{x}_k)\|^\eta + \omega_k\|\nabla F(x_k)F(x_k) - \nabla F(x_k)^{T}F(\overline{x}_k)\|^\eta\\[2mm]
                &\leq
                \xi_k L_F^\eta\|\overline{x}_k - x_k\|^\eta + \omega_k L_F^\eta\|\nabla F(x_k)\|^\eta\|\overline{x}_k-x_k\|^\eta\\[2mm]
                &\leq
                (\xi_{\max}+\omega_{\max}\|\nabla F(x_k)\|^\eta)L_F^\eta \mathrm{dist}(x_k,\Omega)^\eta\\[2mm]
                &\leq
                (\xi_{\max}+\omega_{\max}L_F^\eta)L_F^\eta \mathrm{dist}(x_k,\Omega)^\eta.
    \end{split}
    \end{align}
    From the residual \eqref{eq:LM_residual}, we decompose the direction $d_k$ as
    \begin{align*}
        d_k = -H_k^{-1}\nabla\psi(x_k) + H_k^{-1} r_k = d_k^{LM} + H_k^{-1} r_k,
    \end{align*}
    where $d_k^{LM}$ denotes the exact LM direction.
    Utilizing this decomposition together with \Cref{lem:matrix}\ref{eq:matrix2}, the residual condition $\Vert r_{k}\Vert\leq\tau\mu_k\|d_k\|$, \Cref{lem:local_convergence}, and \eqref{eq:muk_upperbound}, we derive
    \begin{align}
    \begin{split}\label{eq:bound_newtonstep}
        \|F(x_k) + \nabla F(x_k)^{T}d_k\| 
        &= 
        \|F(x_k) + \nabla F(x_k)^{T}d_k^{LM} + \nabla F(x_k)^{T}H_k^{-1} r_k\|\\
        &\leq 
        \|F(x_k) + \nabla F(x_k)^{T}d_k^{LM}\| + \|\nabla F(x_k)^{T}H_k^{-1}\|\|r_k\|\\
        &\leq 
        \|F(x_k) + \nabla F(x_k)^{T}d_k^{LM}\| +\tfrac{\|r_k\|}{2\sqrt{\mu_k}}
        \leq
        \|F(x_k) + \nabla F(x_k)^{T}d_k^{LM}\| +\tfrac{\tau\sqrt{\mu_k}\|d_k\|}{2}\\
        &\leq
        \|F(x_k) + \nabla F(x_k)^{T}d_k^{LM}\| +\tfrac{\tau}{2}c_1(\xi_{\max} + \omega_{\max}L_F^{\eta})^{\tfrac{1}{2}}L_F^{\tfrac{\eta}{2}}\mathrm{dist}(x_k,\Omega)^{\delta_1+\tfrac{\eta}{2}}.
    \end{split}
    \end{align}
    Now, inasmuch as $d_k^{LM}$ is a unique solution to problem \eqref{eq:def_phi_k}, it satisfies
    \begin{align*}
        \|F(x_k) + \nabla F(x_k)^{T}d_k^{LM}\|^2
        &\leq 
        \phi_k(d_k^{LM})
        \leq 
        \phi_k(\overline{x}_k-x_k)
        =
        \|F(x_k) + \nabla F(x_k)^{T}(\overline{x}_k - x_k)\|^2 + \mu_k\|\overline{x}_k - x_k\|^{2}\\
        &\leq \frac{L_H^2}{(1+\nu)^2} \|\overline{x}_k - x_k\|^{2(1+\nu)}
        + (\xi_{\max}+\omega_{\max}L_F^\eta)L_F^\eta \mathrm{dist}(x_k,\Omega)^{\eta+2},
    \end{align*}
    where we invoked \eqref{eq_Holderian_inequality} and \eqref{eq:muk_upperbound} in the final step. Consequently,
    \begin{equation*}
        \|F(x_k) + \nabla F(x_k)^{T}d_k^{LM}\| \leq \left(
        \frac{L_H^2}{(1+\nu)^2} + (\xi_{\max}+\omega_{\max}L_F^\eta)L_F^\eta
        \right)^{\frac{1}{2}}\mathrm{dist}(x_k,\Omega)^{\min\{1+\nu, 1+\frac{\eta}{2}\}}.
    \end{equation*}
    Combining the above inequality with \eqref{eq:bound_newtonstep} gives
    \begin{equation}
        \|\nabla F(x_k)^Td_k + F(x_k)\| \leq 
        \overline{c}_{2}\mathrm{dist}(x_k,\Omega)^{\overline{\delta}_{2}}, \label{eq:lemma2_prelim}
    \end{equation}
    where
    %{\small
    \begin{align*}
        \overline{c}_{2} \coloneqq 
        \tfrac{\tau}{2}c_1(\xi_{\max} + \omega_{\max}L_F^{\eta})^{\tfrac{1}{2}}L_F^{\tfrac{\eta}{2}}
        +
        \left(
        \tfrac{L_H^2}{(1+\nu)^2} + (\xi_{\max}+\omega_{\max}L_F^\eta)L_F^\eta
        \right)^{\tfrac{1}{2}},\hspace{1cm}
        \overline{\delta}_{2} \coloneqq
        \min\left\{
        1+\nu,
        \delta_1+\tfrac{\eta}{2}
        \right\}.
    \end{align*}
    Invoking the H\"{o}lder metric subregularity and local H\"{o}lder gradient continuity, \Cref{lem:local_convergence}, and \eqref{eq:lemma2_prelim}, we infer 
    \begin{align*}
        \left(\beta\mathrm{dist}(x_k+d_k,\Omega)\right)^{\tfrac{1}{\delta}}
        &\leq
        \|F(x_k+d_k)\|
        \leq
        \left\|F(x_k+d_k) - F(x_k) - \nabla F(x_k)^{T}d_k\right\| + \left\|F(x_k) + \nabla F(x_k)^{T}d_k\right\|\\
        &\leq
        \tfrac{L_H}{1+\nu}\|d_k\|^{1+\nu} + \left\|F(x_k) + \nabla F(x_k)^{T}d_k\right\|
        \leq
        \tfrac{L_H}{1+\nu}c_1^{1+\nu}\mathrm{dist}(x_k,\Omega)^{\delta_1(1+\nu)} + \overline{c}_2\mathrm{dist}(x_k,\Omega)^{\overline{\delta}_{2}}\\
        &\leq
        \left( \tfrac{c_1^{1+\nu}L_H}{1+\nu} + \overline{c}_{2}\right)\mathrm{dist}(x_k,\Omega)^{\min\{\delta_1(1+\nu), \overline{\delta}_{2}\}},
    \end{align*}
    which confirms \eqref{eq:dist_decrease} and thereby adjusts our desired results.
\end{proof}
%%%%%%%%%%%

We now establish the main result of this section. In doing so, we identify a region where the parameter $\eta$ must be chosen to guarantee superlinear convergence.
To simplify, let us define the following constants:
\begin{align}\label{eq:sigma-barr}
    \sigma \coloneqq 
        \sum_{i=1}^\infty\left(\tfrac{1}{2}\right)^{\delta_1\delta_2^i},\hspace{2cm}
    \overline{r} \coloneqq
        \min\left\{
            \tfrac{1}{2}c_2^{\tfrac{-1}{\delta_2-1}},
            \left(
                \tfrac{\tilde{r}}{1+c_1+2^{\delta_1}c_1\sigma}
            \right)^{\tfrac{1}{\delta_1}}
        \right\},
\end{align}
\begin{align*}
    \overline{\eta}\coloneqq 
        \begin{cases}
            \min\left\{\tfrac{2\delta(1+\nu)}{1-\delta} -\tfrac{2}{1-\delta}, 2\delta(1+\nu) - \tfrac{2}{(1+\nu)}\right\}\hspace*{1cm} &  \mathrm{if\ } \xi_{\min}>0,~\delta<1,\\[3mm]
            \min\left\{\tfrac{\delta(1+\nu)}{1-\delta} -\tfrac{1}{1-\delta},\tfrac{2\delta(1+\nu)}{2-\delta}-\tfrac{2}{(1+\nu)(2-\delta)}\right\} & \mathrm{if\ } \xi_{\min}=0,~ \delta<1,\\[3mm]
            2(1+\nu)-\tfrac{2}{(1+\nu)} & \mathrm{if\ } \xi_{\min}\ge 0,~\delta=1,
        \end{cases}
\end{align*}
where the parameters $\tilde{r}, \delta_1, \delta_2, c_1, c_2$ are the constants defined in \Cref{rem:parameters}.

The next result ensures that the constants $\sigma$ and $\overline{r}$ are well defined under some mild conditions.

%%%%%%%%%%%%%%%%%%%%%%%%%%%%%%%%%%%
\begin{lemma}\label{lem:sigma-barr}
    Let $\tfrac{1-\delta}{\delta^{2}}<\nu$ if $\xi_{\min}>0$, and $\tfrac{(1-\delta)(2-\delta)}{\delta^{2}}<\nu$ if $\xi_{\min}=0$.
    If $\eta\in \left(\tfrac{2(1-\delta)}{\delta}, \overline{\eta}\right)$, then $\sigma$ is well defined, and $\overline{r}>0$, defined in \eqref{eq:sigma-barr}.
\end{lemma}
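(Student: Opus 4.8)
The plan is to unpack what it means for $\sigma$ and $\overline{r}$ to be well defined and show each follows from the stated hypothesis on $\nu$ and the range of $\eta$. For $\sigma = \sum_{i=1}^\infty (1/2)^{\delta_1\delta_2^i}$ to be a finite sum, since all terms lie in $(0,1)$, it suffices that $\delta_1\delta_2^i \to \infty$, which holds if and only if $\delta_2 > 1$ (recall $\delta_1 > 0$ always). So the crux of the first half is to verify $\delta_2 > 1$ under the hypotheses. For $\overline{r} > 0$, inspecting its definition in \eqref{eq:sigma-barr}, the second argument of the $\min$ is manifestly positive once $\sigma$ is finite (the denominator $1 + c_1 + 2^{\delta_1}c_1\sigma$ is a finite positive number and $\tilde r > 0$), so positivity of $\overline{r}$ reduces to positivity of the first argument $\tfrac{1}{2}c_2^{-1/(\delta_2-1)}$; since $c_2 > 0$ this is positive precisely when $\delta_2 - 1 \neq 0$, i.e. again $\delta_2 > 1$ (we need the exponent to make sense, and $\delta_2 > 1$ is exactly what the contraction estimate \eqref{eq:dist_decrease} needs as well). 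So both claims collapse to a single inequality: $\delta_2 > 1$.

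First I would recall from \Cref{rem:parameters} that $\delta_2 = \min\{\delta\delta_1(1+\nu),\ \delta(\delta_1 + \tfrac{\eta}{2})\}$, so I must show both $\delta\delta_1(1+\nu) > 1$ and $\delta(\delta_1 + \tfrac{\eta}{2}) > 1$. I would split into the two cases $\xi_{\min} > 0$ and $\xi_{\min} = 0$ exactly as the parameter definitions do. In the case $\xi_{\min} > 0$: here $\delta_1 = \min\{1+\nu - \tfrac{\eta}{2\delta},\, 1\}$. The condition $\eta < \overline{\eta}$ with the relevant branch of $\overline{\eta}$ (when $\delta < 1$, $\overline{\eta} \le \tfrac{2\delta(1+\nu)}{1-\delta} - \tfrac{2}{1-\delta} = \tfrac{2(\delta(1+\nu)-1)}{1-\delta}$ and also $\overline{\eta} \le 2\delta(1+\nu) - \tfrac{2}{1+\nu}$; when $\delta = 1$, $\overline\eta = 2(1+\nu) - \tfrac{2}{1+\nu}$) together with $\eta > \tfrac{2(1-\delta)}{\delta}$ should force both $\delta_1(1+\nu)\delta > 1$ and $\delta(\delta_1 + \tfrac{\eta}{2}) > 1$. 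Concretely, for $\delta\delta_1(1+\nu) > 1$ I would treat the two possibilities $\delta_1 = 1$ (then need $\delta(1+\nu) > 1$, which follows from $\tfrac{1-\delta}{\delta^2} < \nu$) and $\delta_1 = 1+\nu-\tfrac{\eta}{2\delta}$ (then $\delta\delta_1(1+\nu) > 1 \iff \eta < 2\delta(1+\nu) - \tfrac{2}{1+\nu}$, one of the terms in $\overline\eta$). For $\delta(\delta_1+\tfrac{\eta}{2}) > 1$, again split on which term realizes $\delta_1$: if $\delta_1 = 1+\nu-\tfrac{\eta}{2\delta}$ then $\delta_1 + \tfrac{\eta}{2} = 1+\nu + \tfrac{\eta}{2}(1-\tfrac{1}{\delta})$ and $\delta(\delta_1+\tfrac{\eta}{2}) > 1 \iff \eta(1-\delta) < 2(\delta(1+\nu)-1) \iff \eta < \tfrac{2(\delta(1+\nu)-1)}{1-\delta}$ (for $\delta < 1$), again a term in $\overline\eta$; if $\delta_1 = 1$ then need $\delta(1+\tfrac{\eta}{2}) > 1 \iff \eta > \tfrac{2(1-\delta)}{\delta}$, which is exactly the lower bound assumed on $\eta$. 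The $\delta = 1$ sub-case is easier since then $\delta\delta_1(1+\nu) = \delta_1(1+\nu) \ge \delta_1 > 0$ needs care only through $\delta_1$; with $\delta_1 = \min\{1+\nu-\tfrac\eta2,1\}$ and $\overline\eta = 2(1+\nu)-\tfrac{2}{1+\nu}$ one checks $\delta_1(1+\nu) > 1$ directly, and $\delta_1 + \tfrac\eta2 > 1$ likewise.

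The case $\xi_{\min} = 0$ is structurally identical but with the substitution $\delta_1 = \min\{1+\nu - \tfrac{\eta}{2}(\tfrac{2}{\delta}-1),\,1\}$ and the corresponding branch of $\overline\eta$ (with $\tfrac{\delta(1+\nu)}{1-\delta} - \tfrac{1}{1-\delta}$ and $\tfrac{2\delta(1+\nu)}{2-\delta} - \tfrac{2}{(1+\nu)(2-\delta)}$), using the hypothesis $\tfrac{(1-\delta)(2-\delta)}{\delta^2} < \nu$ in place of $\tfrac{1-\delta}{\delta^2} < \nu$; I would run the same case analysis on whether $\delta_1$ equals $1$ or the Hölder expression, checking in each of the four resulting sub-situations that the bounds $\tfrac{2(1-\delta)}{\delta} < \eta < \overline\eta$ yield $\delta_2 > 1$. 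Once $\delta_2 > 1$ is established in all cases, I conclude: the exponents $\delta_1\delta_2^i$ increase to $+\infty$ so the geometric-like tail $\sum (1/2)^{\delta_1\delta_2^i}$ converges (dominated by a convergent geometric series once $\delta_1\delta_2^i \ge 1$), hence $\sigma < \infty$ is well defined; and $\delta_2 - 1 > 0$ makes $c_2^{-1/(\delta_2-1)}$ a well-defined positive number, so the first entry of the $\min$ defining $\overline r$ is positive, the second entry $\big(\tilde r/(1+c_1+2^{\delta_1}c_1\sigma)\big)^{1/\delta_1}$ is positive because $\tilde r > 0$, $c_1 > 0$, $\sigma < \infty$, $\delta_1 > 0$, and therefore $\overline r > 0$.

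The main obstacle I anticipate is purely bookkeeping: there is no deep idea, but the verification of $\delta_2 > 1$ fans out into several cases (two for $\xi_{\min}$, two for which term realizes $\delta_1$, two for which term realizes $\delta_2$, plus the $\delta = 1$ versus $\delta < 1$ split), and in each leaf one must confirm that the particular algebraic combination of $\eta, \delta, \nu$ implied by $\tfrac{2(1-\delta)}{\delta} < \eta < \overline\eta$ and the $\nu$-hypothesis does the job — it is easy to misattribute which term of $\overline\eta$ controls which inequality, so I would organize the argument by first writing, for each case, the two target inequalities as explicit bounds on $\eta$ (or, when $\delta_1=1$, as bounds not involving $\eta$ but needing the $\nu$-condition), and then matching each against the corresponding entry in the definition of $\overline\eta$ or the assumed lower bound on $\eta$.
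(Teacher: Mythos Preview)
Your proposal is correct and follows essentially the same approach as the paper: both reduce the claim to verifying $\delta_2>1$ (and $\delta_1>0$) via case analysis on $\xi_{\min}$, on which branch realizes $\delta_1$, and on $\delta=1$ versus $\delta<1$, matching each resulting inequality against the appropriate piece of $\overline{\eta}$ or the lower bound $\eta>\tfrac{2(1-\delta)}{\delta}$. The paper organizes this slightly more compactly by first expanding $\delta_2$ into an explicit three-term minimum, but the substance is identical; one small caveat is that your parenthetical ``$\delta_1>0$ always'' should itself be justified (it follows from $\eta<\overline{\eta}$), as the paper does.
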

\begin{proof}
    It follows from the assumption that $\delta > \tfrac{1}{1+\nu}$ and $\eta$ is from a nonempty interval, i.e., \(\tfrac{2(1-\delta)}{\delta}<\overline{\eta}\).
    We first verify that $0<\delta_1$ and $\delta_2>1$ defined in \Cref{rem:parameters}.
    By the definition of $\delta_1$ and $\delta_2$, we observe that:
    \begin{align*}
        \delta_2 = 
        \begin{cases}
             \min\left\{\delta\left(1+\frac{\eta}{2}\right), \delta(1+\nu)\left(1+\nu-\frac{\eta}{2\delta}\right),\delta\left(1+\nu-\frac{\eta}{2\delta}+\frac{\eta}{2}\right)\right\} \hspace*{1cm}& ~~\mathrm{if\ } \xi_{\min}>0,\\[3mm]
             \min\left\{\delta\left(1+\frac{\eta}{2}\right), \delta(1+\nu)\left(1+\nu-\frac{\eta(2-\delta)}{2\delta}\right),\delta\left(1+\nu-\frac{\eta}{\delta}+\eta\right)\right\} & ~~\mathrm{if\ } \xi_{\min}=0.
        \end{cases}
    \end{align*}
    We first consider the case $\delta=1$. It holds that 
    $$\delta_2 = \min\left\{1+\tfrac{\eta}{2}, (1+\nu)(1+\nu-\tfrac{\eta}{2}), 1+\nu\right\}>1, \hspace{2cm} \delta_1=\min\{1+\nu-\tfrac{\eta}{2},1\}>0,$$
    as $\eta < 2(1+\nu)-\tfrac{2}{(1+\nu)}$.
    Let us now consider $\delta<1$.
    Inasmuch as $\eta > \tfrac{2(1-\delta)}{\delta}$, we have
    $\delta\left(1+\tfrac{\eta}{2}\right)>1.$
    In addition, the condition $\eta<\overline{\eta}$ guarantees that
    \begin{itemize}
        \item If $\xi_{\min}>0$, then \(
    \delta(1+\nu)\left(1+\nu-\frac{\eta}{2\delta}\right) >1\) and
    \(\delta\left(1+\nu-\tfrac{\eta}{2\delta}+\tfrac{\eta}{2}\right) >1\);
        
        \item If $\xi_{\min}=0$, then \(\delta(1+\nu)\left(1+\nu-\tfrac{\eta(2-\delta)}{2\delta}\right) >1\) and
    \(\delta\left(1+\nu-\tfrac{\eta}{\delta}+\eta\right) >1\).
    \end{itemize}
    Thus, in all cases, $\delta_2>1$ and $\delta_1>0$.
    It implies that $\sigma$ is finite and $\overline{r} > 0$, inasmuch as $\delta_1 \delta_2^i > i$ for sufficiently large $i$ and $\sum_{i=1}^{\infty} \left(\tfrac{1}{2}\right)^i = 1$. This completes the proof.
\end{proof}
%%%%%%%%%%%

\Cref{thm:superlinear} establishes the local superlinear convergence of the iterations distance \(\{\mathrm{dist}(x_k,\Omega)\}_{k\ge 0}\).

%%%%%%%%%%%%%%%%%%%%%%%%%%%%%%%%%%%%%%
\begin{theorem}[Superlinear convergence of ILLM]\label{thm:superlinear}
    Let \Cref{as:local} hold. Let
    $\tfrac{1-\delta}{\delta^{2}}<\nu$ if $\xi_{\min}>0$, and $\tfrac{(1-\delta)(2-\delta)}{\delta^{2}}<\nu$ if $\xi_{\min}=0$ and let
    $\eta\in \left(\tfrac{2(1-\delta)}{\delta} , \overline{\eta}\right)$.
    If the sequence $\{x_k\}_{k\ge 0}$ is generated by ILLM initialized at $x_0\in\mathbb{B}(x^*,\overline{r})$, then the following assertions hold:
    \begin{enumerate}[label=(\alph*)]
        \item\label{thm:superlinear_1}
            The sequence $\{\mathrm{dist}(x_k,\Omega)\}_{k\ge 0}$ converges superlinearly to $0$ with order $\delta_2$, i.e., \eqref{eq:dist_decrease} holds for all $k\ge 0$;
        \item\label{thm:superlinear_2}
            The sequence $\{x_k\}_{k\ge 0}$ is convergent to some
            $\overline{x}\in \Omega\cap \mathbb{B}(x^{*},\tilde{r})$;
        \item\label{thm:superlinear_3}
            If $\eta<2\delta\nu$ when $\xi_{\min}>0$, and $\eta<\tfrac{2\delta\nu}{2-\delta}$ when $\xi_{\min}=0$, then 
            the sequence $\{x_k\}_{k\ge 0}$ converges superlinearly with order $\delta_2$, for all $k$ sufficiently large, i.e.,
            \[
            \|x_k-\overline{x}\|\le 2c_1 c_2 \|x_{k-1}-\overline{x}\|^{\delta_2}.
            \]
    \end{enumerate}
    The parameters $\tilde{r},\delta_2,c_1,c_2$, and $\overline{r}$ are the constants defined in \Cref{rem:parameters} and \eqref{eq:sigma-barr}.
\end{theorem}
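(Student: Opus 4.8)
The plan is to run a single induction that simultaneously keeps the iterates inside $\mathbb{B}(x^*,\tilde r)$ and certifies the superlinear contraction of the distances, then deduce convergence of $\{x_k\}$ by a Cauchy argument, and finally upgrade to $Q$-superlinear convergence of the iterates under the extra restriction on $\eta$. Throughout one may assume the algorithm does not terminate, i.e.\ $x_k\notin\Omega$ for every $k$ (otherwise $F(x_k)=0$ for some $k$ and all claims are trivial); write $t_k:=\mathrm{dist}(x_k,\Omega)$. Note first that the conditions on $\nu$ and $\eta$ put us in the hypotheses of \Cref{lem:sigma-barr}, so $\delta_1>0$, $\delta_2>1$, $\sigma<\infty$ and $\overline r>0$; they also place $\eta$ in the ranges required by \Cref{lem:local_convergence} and \Cref{pro:dist_decrease}. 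A preliminary observation is that $\overline r\le\tilde r$: indeed $1+c_1+2^{\delta_1}c_1\sigma\ge 1$ and $\tilde r<1$ (as $r<2$), so $\overline r^{\delta_1}\le\tilde r<1$, hence $\overline r<1$ and $\overline r=(\overline r^{\delta_1})^{1/\delta_1}\le\overline r^{\delta_1}\le\tilde r$ because $\delta_1\in(0,1]$.

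For part \ref{thm:superlinear_1}, I would prove by induction on $k$ the statement ``$x_k\in\mathbb{B}(x^*,\tilde r)$ and $t_k\le\overline r\,(\tfrac12)^{\delta_2^{k}-1}$''. The base case holds since $x_0\in\mathbb{B}(x^*,\overline r)\subseteq\mathbb{B}(x^*,\tilde r)$ and $t_0\le\|x_0-x^*\|\le\overline r$. For the step, assume the statement at all indices $\le k$. Using $x_{k+1}=x_0+\sum_{j=0}^{k}d_j$ together with \Cref{lem:local_convergence} (valid at each $j\le k$) and the induction hypothesis $t_j\le\overline r(\tfrac12)^{\delta_2^j-1}$, one gets $\|d_j\|\le c_1 t_j^{\delta_1}\le c_1\,2^{\delta_1}\overline r^{\delta_1}(\tfrac12)^{\delta_1\delta_2^j}$, and summing with $\sum_{j\ge1}(\tfrac12)^{\delta_1\delta_2^j}=\sigma$ yields $\sum_{j=0}^{k}\|d_j\|\le c_1\overline r^{\delta_1}(1+2^{\delta_1}\sigma)$; hence $\|x_{k+1}-x^*\|\le\overline r+c_1\overline r^{\delta_1}(1+2^{\delta_1}\sigma)\le\overline r^{\delta_1}(1+c_1+2^{\delta_1}c_1\sigma)\le\tilde r$ by the second defining bound for $\overline r$ in \eqref{eq:sigma-barr}. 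With $x_k,x_{k+1}\in\mathbb{B}(x^*,\tilde r)$ and $x_k\notin\Omega$ secured, \Cref{pro:dist_decrease} gives $t_{k+1}\le c_2 t_k^{\delta_2}$, and plugging in the induction hypothesis and using $\overline r\le\tfrac12 c_2^{-1/(\delta_2-1)}$ from \eqref{eq:sigma-barr} produces $t_{k+1}\le\overline r(\tfrac12)^{\delta_2^{k+1}-1}$, closing the induction. In particular $t_{k+1}\le c_2 t_k^{\delta_2}$ for all $k\ge 0$, which is \eqref{eq:dist_decrease}, and $t_k\to0$, so $\{t_k\}$ converges to $0$ with $Q$-order $\delta_2$.

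For part \ref{thm:superlinear_2}, the induction also gives $\sum_{j\ge0}\|d_j\|\le c_1\sum_{j\ge0}t_j^{\delta_1}<\infty$, so $\{x_k\}$ is Cauchy and converges to some $\overline x$; since $\Omega=F^{-1}(\{0\})$ is closed and $t_k\to0$ we get $\overline x\in\Omega$, and since $\|x_k-x^*\|\le\tilde r$ for every $k$ we conclude $\overline x\in\Omega\cap\mathbb{B}(x^*,\tilde r)$. For part \ref{thm:superlinear_3}, the additional restriction on $\eta$ is exactly what forces $\delta_1=1$ (e.g.\ $\eta<2\delta\nu$ gives $1+\nu-\tfrac{\eta}{2\delta}>1$ when $\xi_{\min}>0$, and similarly for $\xi_{\min}=0$), so \Cref{lem:local_convergence} reads $\|d_j\|\le c_1 t_j$. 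Letting $l\to\infty$ in $x_k+\sum_{j=k}^{l-1}d_j=x_l$ gives $\|x_k-\overline x\|\le c_1\sum_{j\ge k}t_j$. Since $t_{j+1}\le c_2 t_j^{\delta_2}$ with $t_j\to0$ and $\delta_2>1$, for all $k$ large one has $c_2 t_j^{\delta_2-1}\le\tfrac12$ for every $j\ge k$, whence $t_{k+i}\le(\tfrac12)^i t_k$ and $\sum_{j\ge k}t_j\le2t_k$, so $\|x_k-\overline x\|\le2c_1 t_k$. Combining with $t_k\le c_2 t_{k-1}^{\delta_2}$ and the trivial bound $t_{k-1}=\mathrm{dist}(x_{k-1},\Omega)\le\|x_{k-1}-\overline x\|$ (as $\overline x\in\Omega$) gives $\|x_k-\overline x\|\le2c_1c_2\|x_{k-1}-\overline x\|^{\delta_2}$ for all $k$ sufficiently large.

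The delicate point is the ball-membership half of the induction in part \ref{thm:superlinear_1}: one must control the \emph{entire} accumulated displacement $\sum_{j\le k}\|d_j\|$ by a single convergent series, which is precisely why the constants $\sigma$ and $\overline r$ in \eqref{eq:sigma-barr} are tailored the way they are and why \Cref{lem:sigma-barr} ($\sigma<\infty$, $\overline r>0$, hinging on $\delta_2>1$ and $\delta_1>0$ over the stated $\eta$-range) is needed as a prerequisite. Everything else reduces to routine chaining of \Cref{lem:local_convergence} and \Cref{pro:dist_decrease}; the only mild care is to order the inductive step so that ball membership of $x_{k+1}$ is established \emph{before} \Cref{pro:dist_decrease} is invoked at index $k$.
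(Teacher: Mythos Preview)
Your proof is correct and follows essentially the same approach as the paper: an induction keeping the iterates inside $\mathbb{B}(x^*,\tilde r)$ using the series bound $\sum_j\|d_j\|\le(1+c_1+2^{\delta_1}c_1\sigma)\overline r^{\delta_1}$, followed by the Cauchy argument for (b) and the $\delta_1=1$ reduction plus geometric tail bound for (c). The only cosmetic difference is that you package the distance estimate $t_k\le\overline r(\tfrac12)^{\delta_2^k-1}$ into the induction hypothesis itself, whereas the paper runs strong induction on ball membership alone and re-derives this bound inside the step via iterated application of \Cref{pro:dist_decrease}; your explicit remark that ball membership of $x_{k+1}$ must be established \emph{before} invoking \Cref{pro:dist_decrease} at index $k$ is exactly the care the paper's ordering implicitly encodes.
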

\begin{proof}
    For Assertion~\ref{thm:superlinear_1}, without loss of generality, assume that $x_{k}\not\in \Omega$ for all $k\ge 0$.
    Invoking \Cref{lem:sigma-barr}, it holds that $0<\delta_1\leq 1$ and $\delta_2>1$.
    It is sufficient to show that $\{x_k\}_{k\ge 0}\subseteq\mathbb{B}(x^*,\tilde{r})$.
    Then, the superlinear convergence of $\{\mathrm{dist}(x_k,\Omega)\}_{k\ge 0}$ to $0$ follows directly from \Cref{pro:dist_decrease}.
    To this end, we employ induction.
    For the case $k=0$, we have $x_0\in\mathbb{B}(x^*,\tilde{r})$ by assumption.
    Assume that $x_i\in\mathbb{B}(x^*,\tilde{r})$ for $i\leq k$.
    We indicate that $x_{k+1}\in\mathbb{B}(x^*,\tilde{r})$.
    For each $1\leq i\leq k$, it follows from \Cref{pro:dist_decrease} that
    \begin{align}
    \begin{split}\label{eq:upper_bound_dist}
        \mathrm{dist}(x_i,\Omega) 
        \leq
        c_2\mathrm{dist}(x_{i-1},\Omega)^{\delta_2}
        \leq
        \cdots
        \leq
        c_2^{\sum_{j=0}^{i-1}\delta_2^j}\mathrm{dist}(x_0,\Omega)^{\delta_2^i}
        \leq
        c_2^{\tfrac{\delta_2^i-1}{\delta_2-1}}\|x_0-x^*\|^{\delta_2^i}
        \leq
        \left(
        \tfrac{1}{2\overline{r}}
        \right)^{\delta_2^i-1}\overline{r}^{\delta_2^i}
        =
        2\overline{r}\left(\tfrac{1}{2}\right)^{\delta_2^i}.
    \end{split}
    \end{align}
    Following this inequality together with \eqref{eq:lem_dk}, we deduce
    \begin{align*}
        \|x_{k+1}-x^*\| &= \left\|x_{0} + \sum_{i=0}^k d_i -x^*\right\|
        \leq
        \|x_0-x^*\| + \sum_{i=0}^k\|d_i\|
        \leq
        \overline{r}
        +
        c_1\sum_{i=0}^k\mathrm{dist}(x_i,\Omega)^{\delta_1}\\
        &\leq
        \overline{r}^{\delta_1} + c_1\overline{r}^{\delta_1}
        +
        c_1\sum_{i=1}^k(2\overline{r})^{\delta_1}\left(\tfrac{1}{2}\right)^{\delta_1\delta_2^i}
        \leq
        (1+c_1)\overline{r}^{\delta_1}
        +
        c_1(2\overline{r})^{\delta_1}\sum_{i=1}^\infty\left(\tfrac{1}{2}\right)^{\delta_1\delta_2^i}\\
        &=
        (1+c_1+2^{\delta_1}c_1\sigma)\overline{r}^{\delta_1}
        \leq
        \tilde{r},
    \end{align*}
    ensuring the induction step. Hence, $\{x_k\}_{k\ge 0}\subseteq \mathbb{B}(x^*,\tilde{r})$.

    To justify Assertion \ref{thm:superlinear_2}, using \Cref{lem:local_convergence} together with \eqref{eq:upper_bound_dist}, we obtain
    \begin{align*}\label{eq:x_k_convergence}
        \sum_{k=0}^{\infty} \|x_{k+1}-x_{k}\| = \sum_{k=0}^{\infty} \|d_{k}\|
        \leq \sum_{k=0}^{\infty} c_1\mathrm{dist}(x_k,\Omega)^{\delta_1}
        \leq c_1 \sum_{k=0}^{\infty} (2\overline{r})^{\delta_1}\left(\tfrac{1}{2}\right)^{\delta_2^i\delta_{1}}
        =  c_1 (2\overline{r})^{\delta_1} \sigma<\infty.
    \end{align*}
    Consequently, the sequence \(\{x_k\}_{k\ge 0}\) converges to some $\overline{x}\in \Omega\cap\mathbb{B}(x^*,\tilde{r})$.
    
    In Assertion \ref{thm:superlinear_3}, it satisfies that $\delta_1=1$.
    On the other hand, superlinear convergence of $\{\mathrm{dist}(x_k,\Omega)\}_{k\ge 0}$ to $0$ implies that $\mathrm{dist}(x_{k+1},\Omega)\leq \tfrac{1}{2}\mathrm{dist}(x_k,\Omega)$, for all $k$ sufficiently large.
    Thus, for all $k$ sufficiently large, it yields that
    \begin{align*}
        \|x_{k} - \overline{x}\| 
        &= 
        \lim_{s\to\infty} \|x_{k} -x_{s}\| \leq \lim_{s\to\infty} \sum_{i=k}^{s-1} \|x_{i+1}-x_{i}\|
        \leq 
        \sum_{i=k}^{\infty} c_1\mathrm{dist}(x_i,\Omega)
        \leq 
        c_1 \sum_{i=k}^{\infty} \tfrac{1}{2^{i-k}}\mathrm{dist}(x_k,\Omega)\\
        &= 
        2c_{1}\mathrm{dist}(x_k,\Omega) \leq 2c_{1}c_{2}\mathrm{dist}(x_{k-1},\Omega)^{\delta_2}
        \leq 
        2c_{1}c_{2} \|x_{k-1} - \overline{x}\|^{\delta_2},
    \end{align*}
    ensuring our desired result.
\end{proof}

%%%%%%%%%%%%%%%%%%%%%%%%%%%%%%%%%%%%%%%%%%%%%%%%%%%%%%%%%%%%%%%%%%%%%%%%%%%%%%%%%%%%%%
\section{Globally convergent inexact Levenberg-Marquardt method}\label{sec.ILMQR}
%%%%%%%%%%%%%%%%%%%%%%%%%%%%%%%%%%%%%%%%%%%%%%%%%%%%%%%%%%%%%%%%%%%%%%%%%%%%%%%%%%%%%%

This section presents a globally convergent ILM method based on a quadratic regularization framework, followed by a detailed analysis of its convergence properties and iteration complexity.
% The LMQR method solves the quadratic subproblem \eqref{eq:def_phi_k} to compute a direction $d_{k}$ satisfying \eqref{eq:LM_direction}.
A central aspect of LM-type methods is choosing a direction that better aligns the model with the objective function. To quantify this alignment, a ratio is computed, which classifies these methods into two main categories: monotone and nonmonotone.
For monotone methods, the ratio of the actual to predicted reduction is defined as
\begin{equation}
\rho_{k}:=\frac{\psi\left(x_{k}\right)-\psi\left(x_{k}+d_{k}\right)}{q_{k}(0)-q_{k}\left(d_{k}\right)},
\end{equation}
in which the quadratic model $q_{k}:\mathbb{R}^{m}\rightarrow\mathbb{R}$ is given by
\(q_{k}(d):=\frac{1}{2}\left\Vert\nabla F\left(x_{k}\right)^{T} d + F\left(x_{k}\right)\right\Vert^{2}.\)
This ratio $\rho_{k}$ is then used to update the direction $d_{k}$ and the regularization parameter $\mu_{k}$.
In both line search and quadratic regularization settings, global convergence to a first-order stationary point of the merit function $\psi$ is guaranteed, leading to a monotonic sequence of function values, that is, $\psi\left(x_{k+1}\right) \leq \psi\left(x_{k}\right)$.
More recently, leveraging the quadratic model $q_{k}$, a nonmonotone LMQR method was introduced in \cite{ahookhosh2020finding}, which uses the ratio \eqref{eq:retiohat}, where the nonmonotone term $D_{k}$ is defined as
\begin{equation}\label{eq:nmt}
    D_{k}:=
    \left\{\begin{array}{ll}
         \psi(x_{0}) & \quad \mathrm{if\ } k=0,  \\[2mm]
         (1-\theta_{k-1}) \psi(x_{k})+\theta_{k-1} D_{k-1} \hspace{2mm}& \quad \mathrm{if\ } k \geq 1,
    \end{array}\right.
\end{equation}
with $\theta_{k-1} \in\left[\theta_{\min }, \theta_{\max }\right]$ and $0 \leq \theta_{\min } \leq \theta_{\max }<1$; see \cite{ahookhosh2012class,gu2008incorporating}.
In this formulation, the numerator represents the nonmonotone reduction, while the denominator remains the predicted reduction from the model.
In \cite{ahookhosh2020finding}, a new LM parameter was also proposed as a modified version of the adaptive regularization $\mu_{k}$ from \eqref{eq:muk}, defined as
\begin{equation}\label{eq:muhatk}
\overline{\mu}_{k}:=\max \big\{\mu_{\min }, \lambda_{k}\mu_{k}\big\},
\end{equation}
where $\eta >0,\, \mu_{\min}>0,\, \xi_{k} \in[\xi_{\min}, \xi_{\max}],\, \omega_{k} \in[\omega_{\min},\omega_{\max }]$ with $\xi_{\min} + \omega_{\min}>0$, and $\lambda_{k}$ is updated by the rule
$$
\lambda_{k+1}:= 
\begin{cases} 
\nu_{1} \lambda_{k} &\quad \mathrm{if\ } \overline{\rho}_{k}<\varrho_{1},\\
\lambda_{k}         &\quad \mathrm{if\ } \varrho_{1} \leq \overline{\rho}_{k}<\varrho_{2},\\
\nu_{2} \lambda_{k} &\quad \mathrm{if\ } \overline{\rho}_{k} \geq \varrho_{2},
\end{cases}
$$
in which $0<\nu_{2}<1<\nu_{1}$ and $0<\varrho_{1}<\varrho_{2}<1$.
A straightforward comparison between \eqref{eq:muk} and \eqref{eq:muhatk} reveals that $\overline{\mu}_{k}$ is uniformly bounded below and that the additional control $\lambda_{k}$ can enhance the numerical performance.

Let us emphasize that solving the linear system \eqref{eq:LM_direction} exactly in our LM method is commonly impractical for large-scale problems, especially when the current iteration is far from the solution. This motivates the development of the Inexact LMQR (ILMQR) method,  which seeks an approximate solution to the linear system 
\begin{equation}
\left(\nabla F(x_k) \nabla F(x_k)^T + \overline{\mu}_k I\right) d_k = - \nabla F(x_k) F(x_k),
\label{eq:LMQR_direction}
\end{equation}
such that the residual
\begin{equation*}\label{eq:LM_hat_residual}
    \begin{array}{ll}
         \overline{r}_{k}
         =
        \overline{r}(x_k)\hspace{-2.5mm}
         &:= 
         \left(\nabla F(x_{k})\nabla F(x_{k})^{T}+\overline{\mu}_{k}I\right)d_{k}+\nabla F(x_{k})F(x_{k})
         = 
         \overline{H}_{k}d_{k} +\nabla \psi(x_{k}),
    \end{array}
\end{equation*}
satisfies the residual condition $\Vert\overline{r}_{k}\Vert\leq\tau\overline{\mu}_k\|d_k\|$, for some constant $\tau\in(0,\tfrac{1}{2})$. Here,
$$\overline{H}_{k}=\overline{H}(x_{k}):=\nabla F(x_{k}) \nabla F(x_{k})^{T}+\overline{\mu}_{k} I.$$

In the proposed ILMQR algorithm, we first calculate $\overline{\mu}_{k}$ using \eqref{eq:muhatk}, then approximately solve the linear system \eqref{eq:LMQR_direction} to find the direction $d_{k}$ that satisfies the inexactness condition, $\Vert\overline{r}_{k}\Vert\leq\tau\overline{\mu}_k\|d_k\|$. Next, we evaluate the ratio $\overline{\rho}_{k}$ defined in \eqref{eq:retiohat}.
When $\overline{\rho}_{k}\approx 1$, the model aligns well with the objective function in $x_{k}$.
If $\overline{\rho}_{k} \geq \varrho_{1}$, the direction $d_{k}$ is accepted, which means $x_{k+1}=x_{k}+d_{k}$. Otherwise, the parameter $\lambda_{k}$ is increased by setting $\lambda_{k}=\nu_{1} \lambda_{k}$. If $\overline{\rho}_{k} \geq \varrho_{2}$, $\lambda_{k+1}$ is reduced by $\lambda_{k+1}=\max\{1, \nu_{2} \lambda_{k}\}$.
Finally, the algorithm checks the termination criteria.
This procedure is summarized in ILMQR.
%%%%%%%%%%%%%%%%%%%%%%
\RestyleAlgo{boxruled}
\begin{algorithm}[ht!]
    \LinesNumbered
    \DontPrintSemicolon
    \KwIn{$x_{0} \in \mathbb{R}^{m},\,\xi_{\max}>\xi_{\min}\geq 0,\, \omega_{\max}>\omega_{\min}\geq 0$ with $\xi_{\min}+\omega_{\min}>0$,\, $\mu_{\min }>0,\, \eta>0,$
    $0\leq \theta_{\min}\leq \theta_{\max}<1,\, 0<\varrho_{1}<\varrho_{2}<1,\, 0<\nu_{2}<1<\nu_{1},\, 0<\tau<\min\left\{\tfrac{1}{2},\tfrac{1-\varrho_{1}}{1+\varrho_{1}}\right\}$;}
    \Begin
    { 
        $k=0;$
        $\bar{\lambda}_{0}=1;$\;
        \While{a stopping criterion is not satisfied}
        {\label{beginouterloop}
            $\lambda_{k} = \bar{\lambda}_{k}$;\;
            Choose $\xi_{k}\in[\xi_{\min}, \xi_{\max}],\, \omega_{k} \in[\omega_{\min},\omega_{\max }]$, and $\theta_{k-1}\in\left[\theta_{\min }, \theta_{\max }\right]$;\;
            Calculate $\overline{\mu}_{k}$ and $D_{k}$ by \eqref{eq:muhatk} and \eqref{eq:nmt}, respectively;\;
            Find the approximate solution $d_k$ by solving \eqref{eq:LMQR_direction} inexactly with $\|\overline{r}_{k}\|\leq\tau\overline{\mu}_k\|d_k\|$;\;
            Compute $\overline{\rho}_{k}$ by \eqref{eq:retiohat};\;
            $s=0$;\;
            \While{$\overline{\rho}_k<\varrho_{1}$}
            {\label{begininnerloop}
                $s=s+1;\;
                \lambda_{k}=\nu_{1}^{s} \bar{\lambda}_{k}$;\;
                Calculate $\overline{\mu}_k$ by \eqref{eq:muhatk};\;
                Find the approximate solution $d_k$ by solving \eqref{eq:LMQR_direction} inexactly with $\|\overline{r}_{k}\|\leq\tau\overline{\mu}_k\|d_k\|$;\;
                Compute $\overline{\rho}_{k}$ by \eqref{eq:retiohat};
           }\label{endinnerloop}
            $s_{k}=s;$\;
            $x_{k+1}=x_{k}+d_{k};$\;
            \eIf{$\overline{\rho}_{k} \geq \varrho_{2}$}
            {
                $\bar{\lambda}_{k+1}= \max\{1, \nu_{2} \nu_{1}^{s_{k}} \bar{\lambda}_{k}\}$;
            }
            {
                $\bar{\lambda}_{k+1}=\nu_{1}^{s_{k}} \bar{\lambda}_{k}$;
            }
            $k=k+1$;
        }\label{endouterloop}
        \KwOut{$x_{k}$;}
    } 
\caption{ILMQR (Inexact Levenberg-Marquardt algorithm with Quadratic Regularization)\label{ALGORITHM-3:ILMQR}}
\end{algorithm}
%%%%%%%%%%%%%%%

In ILMQR, the loop starting from Line \ref{begininnerloop} to Line \ref{endouterloop} is referred to as the inner loop, while the loop from Line \ref{beginouterloop} to Line \ref{endouterloop} is called the outer loop. Moreover, one may use $\Vert F(x_{k})\Vert \leq \epsilon$ or $\Vert \nabla \psi(x_{k})\Vert \leq \epsilon$ as a stopping criterion in ILMQR.
To establish the global convergence of the sequence $\left\{x_{k}\right\}_{k\ge 0}$ generated by ILMQR to a stationary point of $\psi$, we assume that the following assumptions hold:
%%%%%%%%%%%%%%%%%%%%%%%%%%%%
\begin{ass}\label{as:global}
    Let us consider the system \eqref{eq:nonequa}. Let $x_{0}\in \mathbb{R}^m$ be arbitrary. Assume that
    \begin{enumerate}[label=\textbf{(A\arabic*)}]\setcounter{enumi}{2}
    \item \label{as:global1} The mapping $F$ is Lipschitz gradient continuous with modulus $L>0$ on $\mathbb{R}^m$;
    \item \label{as:global2} The lower level set $\Lambda(x_{0}):=\left\{x \in \mathbb{R}^{m} \mid \psi(x) \leq \psi(x_{0})\right\}$ is bounded, i.e., there exists $\Lambda>0$ that
    $\|x-x_{0}\|\leq \Lambda$ for all $x\in \Lambda(x_{0}).$ Let us define the constant
    $L_{0}:=L \Lambda + \|\nabla F(x_{0})\|.$
\end{enumerate}
\end{ass}
%%%%%%%%%

\Cref{lem:basic_ILMQR} provides some upper and lower bounds for the numerator and denominator of $\rho_{k}-1$, which can be employed to show well-definedness and convergence results of ILMQR.
%%%%%%%%%%%%%%%%%%%%%%%%%%%%%%%%%%%%
\begin{lemma}\label{lem:basic_ILMQR}
    Let \Cref{as:global} hold.
    If $x_{k}\in \Lambda(x_{0})$ and the direction $d_{k}$ solves the system \eqref{eq:LMQR_direction} inexactly such that $\|\overline{r}_{k}\|\leq \tau\overline{\mu}_{k}\|d_{k}\|$, then the following assertions hold:
    \begin{enumerate}[label=(\alph*)]
        \item\label{lem:basic_ILMQR_p1}
            $q_{k}(0)-q_{k}(d_{k})\geq \tfrac{\overline{\mu}_{k}}{2}\|d_{k}\|^{2};$
        \item\label{lem:basic_ILMQR_p2}
            $q_{k}(0)-q_{k}(d_{k})\geq \tfrac{1}{2\left(L_{0}^{2}+\overline{\mu}_{k}\right)} \left\|\overline{r}_{k}-\nabla \psi\left(x_{k}\right)\right\|^{2};$
        \item\label{lem:basic_ILMQR_p3}
            $q_{k}(0)-q_{k}(d_{k})\leq \left(\tfrac{1}{2} L_{0}^{2}+(1+\tau) \overline{\mu}_{k}\right) \|d_{k}\|^{2};$
        \item\label{lem:basic_ILMQR_p4}
            $|q_{k}(d_{k})-\psi(x_{k}+d_{k})| \leq \bigg(\frac{3L^{2}L_{0}^{2}\|F(x_{0})\|^{2}}{8\mu_{\min}^{2}(1-\tau)^{2}} + \frac{LL_{0}^{2}\|F(x_{0})\|}{2\mu_{\min}(1-\tau)} + \frac{L\|F(x_{0})\|}{2}\bigg)\|d_{k}\|^{2}$.
    \end{enumerate}
\end{lemma}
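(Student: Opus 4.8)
The backbone of the argument is one exact identity for the predicted reduction, from which (a)--(c) fall out, while (d) is treated separately via the Lipschitz gradient estimate. Throughout, write $A_k := \nabla F(x_k)$ and note that, since $x_k \in \Lambda(x_0)$ and $F$ has an $L$-Lipschitz gradient, $\|A_k\| \le \|A_k - \nabla F(x_0)\| + \|\nabla F(x_0)\| \le L\|x_k - x_0\| + \|\nabla F(x_0)\| \le L_0$, and moreover $\psi(x_k) \le \psi(x_0)$ gives $\|F(x_k)\| \le \|F(x_0)\|$. Expanding $q_k(d) = \tfrac12\|A_k^T d + F(x_k)\|^2$ as a quadratic in $d$ and eliminating $A_k F(x_k) = \nabla\psi(x_k)$ via the residual identity $\overline{H}_k d_k + \nabla\psi(x_k) = \overline{r}_k$, one obtains
\[
q_k(0) - q_k(d_k) = \tfrac12\|A_k^T d_k\|^2 + \overline{\mu}_k\|d_k\|^2 - \overline{r}_k^T d_k .
\]
Since $|\overline{r}_k^T d_k| \le \|\overline{r}_k\|\,\|d_k\| \le \tau\overline{\mu}_k\|d_k\|^2$ and $\tau < \tfrac12$, dropping the nonnegative term $\tfrac12\|A_k^T d_k\|^2$ yields (a) (indeed with constant $1-\tau > \tfrac12$); and using instead $\|A_k^T d_k\|^2 \le L_0^2\|d_k\|^2$ together with $-\overline{r}_k^T d_k \le \tau\overline{\mu}_k\|d_k\|^2$ gives (c).

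For (b), the identity and $\tau < \tfrac12$ give $q_k(0) - q_k(d_k) \ge \tfrac12\big(\|A_k^T d_k\|^2 + \overline{\mu}_k\|d_k\|^2\big)$, so it suffices to prove the purely algebraic inequality
\[
\|\overline{H}_k d_k\|^2 \le (L_0^2 + \overline{\mu}_k)\big(\|A_k^T d_k\|^2 + \overline{\mu}_k\|d_k\|^2\big),
\]
because $\overline{H}_k d_k = \overline{r}_k - \nabla\psi(x_k)$. Expanding $\|\overline{H}_k d_k\|^2 = \|A_k A_k^T d_k\|^2 + 2\overline{\mu}_k\|A_k^T d_k\|^2 + \overline{\mu}_k^2\|d_k\|^2$ and bounding $\|A_k A_k^T d_k\|^2 \le \|A_k\|^2\|A_k^T d_k\|^2 \le L_0^2\|A_k^T d_k\|^2$, the inequality reduces to $\|A_k^T d_k\|^2 \le L_0^2\|d_k\|^2$, which holds. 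Chaining the two estimates gives (b).

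For (d), set $u := A_k^T d_k + F(x_k)$ and $v := F(x_k + d_k)$, so that $q_k(d_k) - \psi(x_k + d_k) = \tfrac12(\|u\|^2 - \|v\|^2) = \langle v, u - v\rangle + \tfrac12\|u - v\|^2$. The H\"olderian inequality \eqref{eq_Holderian_inequality} with $\nu = 1$ gives $\|u - v\| = \|F(x_k) + A_k^T d_k - F(x_k + d_k)\| \le \tfrac{L}{2}\|d_k\|^2$, hence $\|v\| \le \|u - v\| + \|u\| \le \tfrac{L}{2}\|d_k\|^2 + L_0\|d_k\| + \|F(x_0)\|$. Multiplying out,
\[
|q_k(d_k) - \psi(x_k + d_k)| \le \tfrac{3L^2}{8}\|d_k\|^4 + \tfrac{L L_0}{2}\|d_k\|^3 + \tfrac{L\|F(x_0)\|}{2}\|d_k\|^2 .
\]
It remains to bound $\|d_k\|$: from $d_k = \overline{H}_k^{-1}\overline{r}_k - \overline{H}_k^{-1}\nabla\psi(x_k)$, the residual condition and \Cref{lem:matrix}\ref{eq:matrix1} give $(1-\tau)\|d_k\| \le \|\overline{H}_k^{-1}\nabla F(x_k)F(x_k)\| \le \tfrac{1}{\overline{\mu}_k}L_0\|F(x_0)\| \le \tfrac{L_0\|F(x_0)\|}{\mu_{\min}}$, so $\|d_k\| \le \tfrac{L_0\|F(x_0)\|}{\mu_{\min}(1-\tau)}$. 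Substituting this into the $\|d_k\|^4$ and $\|d_k\|^3$ terms while retaining a factor $\|d_k\|^2$ reproduces precisely the three announced coefficients.

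The routine parts are (a)--(c), which amount to bookkeeping once the key identity and the bound $\|\nabla F(x_k)\| \le L_0$ on the level set are in place. The one place requiring care is (d): the quartic coefficient $3L^2/8$ forces the decomposition $\|u\|^2 - \|v\|^2 = 2\langle v, u-v\rangle + \|u-v\|^2$ (rather than $\langle u+v, u-v\rangle$), and matching the stated constants forces the coarse bound $\|\overline{H}_k^{-1}\nabla F(x_k)\| \le \|\overline{H}_k^{-1}\|\,\|\nabla F(x_k)\| \le L_0/\mu_{\min}$ in place of the sharper \Cref{lem:matrix}\ref{eq:matrix2}.
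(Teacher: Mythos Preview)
Your proof is correct and follows essentially the same route as the paper's: the same exact identity $q_k(0)-q_k(d_k)=\tfrac12\|A_k^Td_k\|^2+\overline{\mu}_k\|d_k\|^2-\overline{r}_k^Td_k$, the same Lipschitz bound $\|\nabla F(x_k)\|\le L_0$ on the level set, the same decomposition for (d), and the same $\|d_k\|$ bound. The only cosmetic differences are that for (b) the paper writes $\tfrac12 d_k^T\overline{H}_k d_k=\tfrac12(\overline{r}_k-\nabla\psi(x_k))^T\overline{H}_k^{-1}(\overline{r}_k-\nabla\psi(x_k))$ and invokes $\lambda_{\min}(\overline{H}_k^{-1})=\|\overline{H}_k\|^{-1}$, whereas you verify the equivalent inequality $\|\overline{H}_k d_k\|^2\le\|\overline{H}_k\|\,d_k^T\overline{H}_k d_k$ by direct expansion; and for the bound on $\|d_k\|$ the paper starts from $\overline{\mu}_k\|d_k\|^2\le(\overline{r}_k-\nabla\psi(x_k))^Td_k$ rather than from $d_k=\overline{H}_k^{-1}(\overline{r}_k-\nabla\psi(x_k))$, arriving at the identical estimate.
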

%%%%%%%%%%%
\begin{proof}
    \ref{lem:basic_ILMQR_p1} We observe that
    \begin{equation}\label{eq:basiclem1}
    \begin{gathered}
        q_{k}(0)-q_{k}(d_{k})
        = 
        \tfrac{1}{2} \|F(x_{k})\|^{2} -\tfrac{1}{2} \|\nabla F(x_{k})^{T}d_{k} + F(x_{k})\|^{2}
        = - \tfrac{1}{2} \|\nabla F(x_{k})^{T}d_{k}\|^{2} - \nabla \psi(x_{k})^{T}d_{k},\\
   \overline{r}_{k}^{T} d_{k} 
    = 
    \Big(\left(\nabla F(x_{k})\nabla F(x_{k})^{T}+\overline{\mu}_{k}I\right)d_{k}+\nabla F(x_{k})F(x_{k})\Big)^{T} d_{k}
    = 
    \|\nabla F(x_{k})^{T}d_{k}\|^{2} + \overline{\mu}_{k}\|d_{k}\|^{2} +  \nabla \psi(x_{k})^{T}d_{k}.
    \end{gathered}
    \end{equation}
    Moreover,
    \begin{align}\label{eq:Lbound_dHd}
        \big(\overline{r}_{k}-\nabla \psi(x_{k}) \big)^{T}d_{k} = d_{k}^{T} \overline{H}_{k} d_{k}\geq \lambda_{\min}(\overline{H}_{k}) \|d_{k}\|^{2}
        = \frac{1}{\lambda_{\max}(\overline{H}_{k}^{-1})} \|d_{k}\|^{2}
        = \frac{1}{\|\overline{H}_{k}^{-1}\|} \|d_{k}\|^{2}
        \geq \overline{\mu}_{k} \|d_{k}\|^{2},
    \end{align}
    where the last inequality uses \Cref{lem:matrix}~\ref{eq:matrix1}.
    Combining the inequalities in \eqref{eq:basiclem1} and using \eqref{eq:Lbound_dHd}, we come to
    \begin{align}\label{eq:Lbound_q0_qd}
    %\begin{array}{ll}
        q_{k}(0)-q_{k}(d_{k})
        &
        = 
        -\overline{r}_{k}^{T} d_{k} + \tfrac{1}{2} \overline{\mu}_{k}\|d_{k}\|^{2} + \tfrac{1}{2}
        \big(\overline{r}_{k}-\nabla \psi(x_{k}) \big)^{T}d_{k}\\[2mm]
        &\geq 
        \left(-\|\overline{r}_{k}\| + \tfrac{1}{2} \overline{\mu}_{k}\|d_{k}\|\right)\|d_{k}\| + \tfrac{\overline{\mu}_{k}}{2} \|d_{k}\|^{2}
        \geq 
        \tfrac{\overline{\mu}_{k}}{2} \|d_{k}\|^{2},\nonumber
    \end{align}
    where the last inequality uses $\|\overline{r}_{k}\|\leq \tau\overline{\mu}_{k}\|d_{k}\|$ with $\tau\in (0,\tfrac{1}{2})$, confirming Assertion~\ref{lem:basic_ILMQR_p1}.
    
    \ref{lem:basic_ILMQR_p2} By virtue of \eqref{eq:Lbound_q0_qd} together with residual condition, we obtain
    \begin{align}\label{eq:Lbound2_q0_qd}
        q_{k}(0)-q_{k}(d_{k}) 
        &\geq 
        \tfrac{1}{2}
        \big(\overline{r}_{k}-\nabla \psi(x_{k}) \big)^{T}d_{k}
        = 
        \tfrac{1}{2}\big(\overline{r}_{k}-\nabla \psi(x_{k}) \big)^{T} \overline{H}_{k}^{-1} \big(\overline{r}_{k}-\nabla \psi(x_{k}) \big)\nonumber\\[2mm]
        &\geq 
        \tfrac{1}{2}\lambda_{\min}(\overline{H}_{k}^{-1}) \left\|\overline{r}_{k}-\nabla \psi(x_{k})\right\|^{2} = 
        \tfrac{1}{2}\|\overline{H}_{k}\|^{-1} \left\|\overline{r}_{k}-\nabla \psi(x_{k})\right\|^{2}.
    \end{align}
    Furthermore, the Lipschitz continuity of $\nabla F$ and $x_{k}\in \Lambda(x_{0})$ imply that
    \begin{gather}
        \|\nabla F(x_{k})\| \leq \|\nabla F(x_{k}) - \nabla F(x_{0})\| + \|\nabla F(x_{0})\|
        \leq  L \| x_{k}-x_{0}\| + \|\nabla F(x_{0})\|
        \leq L_{0},\label{eq:bound_gradient}\\
        \|\overline{H}_{k}\|=\|\nabla F(x_{k}) \nabla F(x_{k})^{T}+\overline{\mu}_{k} I\| \leq \|\nabla F(x_{k}) \|^{2} + \overline{\mu}_{k} \leq L_{0}^{2}+\overline{\mu}_{k}.\label{eq:bound_H}
    \end{gather}
    Substituting \eqref{eq:bound_H} into \eqref{eq:Lbound2_q0_qd} ensures Assertion~\ref{lem:basic_ILMQR_p2}.

    \ref{lem:basic_ILMQR_p3} Utilizing \eqref{eq:Lbound_q0_qd} and \eqref{eq:bound_H} together with residual condition $\|\overline{r}_{k}\|\leq \tau\overline{\mu}_{k}\|d_{k}\|$, it holds that
    \begin{equation*}
    \begin{array}{ll}
        q_{k}(0)-q_{k}(d_{k})
        &= 
        -\overline{r}_{k}^{T} d_{k} + \tfrac{1}{2} \overline{\mu}_{k}\|d_{k}\|^{2} + \tfrac{1}{2}
        \big(\overline{r}_{k}-\nabla \psi(x_{k}) \big)^{T}d_{k}
        = 
        -\overline{r}_{k}^{T} d_{k} + \tfrac{1}{2} \overline{\mu}_{k}\|d_{k}\|^{2} + \tfrac{1}{2}
        d_{k}^{T} \overline{H}_{k} d_{k}\\[3mm]
        &\leq 
        \|\overline{r}_{k}\| \|d_{k}\| + \tfrac{1}{2} \overline{\mu}_{k}\|d_{k}\|^{2} +
         \tfrac{1}{2} \|\overline{H}_{k}\| \|d_{k}\|^{2}
        \leq 
        \tau \overline{\mu}_{k}\|d_{k}\|^{2} + \tfrac{1}{2} \overline{\mu}_{k} \|d_{k}\|^{2}+ \tfrac{1}{2} (L_{0}^{2}+\overline{\mu}_{k}) \|d_{k}\|^{2}\\[3mm]
        &=
        \left(\tfrac{1}{2} L_{0}^{2}+(1+\tau) \overline{\mu}_{k}\right) \|d_{k}\|^{2},
    \end{array}
    \end{equation*}
    verifying Assertion~\ref{lem:basic_ILMQR_p3}.

    \ref{lem:basic_ILMQR_p4} From \eqref{eq:Lbound_dHd}, it follows that
    \begin{align*}
        \overline{\mu}_{k}\|d_{k}\|^{2} 
        &\leq  
        \left(\overline{r}_{k}-\nabla \psi(x_{k})\right)^{T} d_{k}
        \leq 
        \left\|\overline{r}_{k}-\nabla \psi(x_{k})\right\| \|d_{k}\|
        \leq 
        \|\overline{r}_{k}\| \|d_{k}\| + \|\nabla\psi(x_{k})\| \|d_{k}\|
        \leq 
        \tau\overline{\mu}_{k} \|d_{k}\|^{2} + L_{0} \|F(x_{0})\|\|d_{k}\|,
    \end{align*}
    leading to
    \begin{equation}\label{eq:Ubound_d}
        \|d_{k}\| \leq \tfrac{L_{0}\|F(x_{0})\|}{\overline{\mu}_{k}(1-\tau)}\leq \tfrac{L_{0}\|F(x_{0})\|}{\mu_{\min}(1-\tau)}.
    \end{equation}
    In addition, from Lipschitzness of $\nabla F$ and $x_{k}\in \Lambda(x_{0})$ we get
    \begin{align}\label{eq:Ubound_Fx+d}
        \|F(x_{k}+d_{k})\| &\leq \|F(x_{k}+d_{k}) - F(x_{k}) - \nabla F(x_{k})^{T}d_{k}\| + \|F(x_{k})\|+ \|\nabla F(x_{k})^{T}d_{k}\|\nonumber\\[2mm]
        &\leq \tfrac{L}{2} \|d_{k}\|^{2} + \|F(x_{0})\|+\|\nabla F(x_{k})\| \|d_{k}\|
        \leq \tfrac{LL_{0}^{2}\|F(x_{0})\|^{2}}{2\mu_{\min}^{2}(1-\tau)^{2}} + \|F(x_{0})\| + \tfrac{L_{0}^{2}\|F(x_{0})\|}{\mu_{\min}(1-\tau)},
    \end{align}
    utilizing \eqref{eq:Ubound_d} in the last inequality.
    Finally, applying \eqref{eq:Ubound_d} and \eqref{eq:Ubound_Fx+d} we obtain
    \begin{align*}
        |q_{k}(d_{k})-\psi(x_{k}+d_{k})|
        &= 
        \bigg|\tfrac{1}{2}\Big\|\nabla F(x_{k})^{T}d_{k} + F(x_{k})\Big\|^{2} - \tfrac{1}{2} \|F(x_{k}+d_{k})\|^{2} \bigg| \nonumber\\[2mm]
        &= 
        \bigg|\tfrac{1}{2} \Big\|\nabla F(x_{k})^{T}d_{k} + F(x_{k}) - F(x_{k}+d_{k})\Big\|^{2}
        + \Big(\nabla F(x_{k})^{T}d_{k} + F(x_{k}) - F(x_{k}+d_{k})\Big)^{T} F(x_{k}+d_{k}) \bigg| \nonumber\\[2mm]
        &\leq 
        \tfrac{1}{2} \Big\|\nabla F(x_{k})^{T}d_{k} + F(x_{k}) - F(x_{k}+d_{k})\Big\|^{2}
        + \Big\|\nabla F(x_{k})^{T}d_{k} + F(x_{k}) - F(x_{k}+d_{k})\Big\| \|F(x_{k}+d_{k})\| \nonumber\\[2mm]
        &\leq 
        \tfrac{L^{2}}{8}\|d_{k}\|^{4} + \tfrac{L}{2} \|d_{k}\|^{2} \Big( \tfrac{LL_{0}^{2}\|F(x_{0})\|^{2}}{2\mu_{\min}^{2}(1-\tau)^{2}} + \tfrac{L_{0}^{2}\|F(x_{0})\|}{\mu_{\min}(1-\tau)} + \|F(x_{0})\|\Big)\\
        &\leq 
        \bigg(\tfrac{3L^{2}L_{0}^{2}\|F(x_{0})\|^{2}}{8\mu_{\min}^{2}(1-\tau)^{2}} + \tfrac{LL_{0}^{2}\|F(x_{0})\|}{2\mu_{\min}(1-\tau)} + \tfrac{L\|F(x_{0})\|}{2}\bigg)\|d_{k}\|^{2}.
    \end{align*}
    This completes the proof.
\end{proof}
%%%%%%%%%%%

The following proposition provides an essential result on the well-definedness of ILMOR, showing that the inner loop of ILMQR finitely concludes and the parameters $\overline{\mu}_{k}$ and $s_{k}$ are upper-bounded. For the sake of simplicity, let us define the following constants:
$$
\beta_{0}:=\tfrac{\nu_{1}}{1-\tau-\varrho_{1}(1+\tau)}\left(\tfrac{1}{2}(1+\varrho_{1}) L_{0}^{2}+\tfrac{3L^{2}L_{0}^{2}\|F(x_{0})\|^{2}}{8\mu_{\min}^{2}(1-\tau)^{2}} + \tfrac{LL_{0}^{2}\|F(x_{0})\|}{2\mu_{\min}(1-\tau)} + \tfrac{L\|F(x_{0})\|}{2}\right),
\hspace{1cm}
\beta_{1}=\max\{\mu_{\min},\beta_{0}\},
$$
$$
\tilde{\beta}:=\tfrac{\big(\xi_{\max}+L_{0}^{\eta}\omega_{\max}\big) \|F(x_{0})\|^{\eta}}{(\xi_{\min}+\omega_{\min})\epsilon^{\eta}}\max\big\{(\xi_{\min}+\omega_{\min})\epsilon^{\eta}, \mu_{\min}, \beta_{0}\big\}.
$$

%%%%%%%%%%%%%%%%%%%%%%%%%%%%%%%%%%%%%%%%%%%%%%%%%%%%%%%%%%%%%%%%%%%%%%%%%%
\begin{proposition}[Well-definedness of ILMQR]\label{pro:well_defineILMQR}
Let \Cref{as:global} hold.
If the infinite sequence $\left\{x_{k}\right\}_{k\ge 0}$ is generated by ILMQR, then the following assertions hold:
\begin{enumerate}[label=(\alph*)]
    \item\label{pro:well_defineILMQR_p1}
         In each iteration, the inner loop is terminated in a finite number of steps (i.e. $s_{k}<\infty$), $x_{k}\in \Lambda(x_{0})$, and
        $$\psi(x_{k})\leq D_{k}\leq D_{k-1},~~\text{for}~~ k\geq 1.$$
    \item\label{pro:well_defineILMQR_p2}
        If $\|F(x_{k})\|>\epsilon$ and $\|\nabla F(x_{k})F(x_{k})\|>\epsilon$ for all $k=0,1,\ldots,\tilde{k}$, for some $\tilde{k}\geq 0$, then
        \begin{equation*}\label{eq:Ubound_mu_beta}
        \overline{\mu}_{k} \leq \tilde{\beta},~~~k=0,1,\ldots,\tilde{k}.
        \end{equation*}
        \begin{equation}\label{eq:Ubound_s}
            s_{k} \leq \tfrac{\log(\beta_{1})-\log\big((\xi_{\min}+\omega_{\min})\epsilon^{\eta}\big)}{\log(\nu_{1})},~~~k=0,1,\ldots,\tilde{k}.
        \end{equation}
\end{enumerate}
\end{proposition}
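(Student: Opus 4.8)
The plan is to treat the two assertions in sequence, each by an induction on $k$ that feeds on \Cref{lem:basic_ILMQR}. For Assertion~\ref{pro:well_defineILMQR_p1}, I would argue by strong induction on $k$: assuming $x_0,\dots,x_k\in\Lambda(x_0)$, I first show the inner loop at iteration $k$ terminates. The key observation is that $\overline{\rho}_k-1$ can be written with numerator $D_k-\psi(x_k+d_k)-\big(q_k(0)-q_k(d_k)\big)$ over denominator $q_k(0)-q_k(d_k)$, and since $D_k\geq\psi(x_k)=q_k(0)$ the numerator is bounded above by $|q_k(d_k)-\psi(x_k+d_k)|$. Using \ref{lem:basic_ILMQR_p4} to bound this by $C\|d_k\|^2$ and \ref{lem:basic_ILMQR_p1} to bound the denominator below by $\tfrac{\overline{\mu}_k}{2}\|d_k\|^2$, one gets $\overline{\rho}_k \geq 1 - \tfrac{2C}{\overline{\mu}_k}$ (more precisely, combining with \ref{lem:basic_ILMQR_p3} one reaches an inequality of the form $1-\overline{\rho}_k \leq \tfrac{\text{const}}{\overline{\mu}_k}\cdot\text{const}'$ plus a term controlled by $\tau$). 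Since each failed inner step multiplies $\lambda_k$ — hence $\overline{\mu}_k$ — by $\nu_1>1$, after finitely many steps $\overline{\mu}_k$ is large enough that $\overline{\rho}_k\geq\varrho_1$; the precise threshold is exactly $\beta_1$, which is where the constant $\beta_0$ comes from (it is the value of $\overline{\mu}_k$ past which $\overline{\rho}_k\geq\varrho_1$ is forced, after rearranging $1-\varrho_1 \leq \tfrac{\nu_1}{\overline{\mu}_k}\big(\tfrac{1}{2}(1+\varrho_1)L_0^2 + \dots\big)$ and using $\tau<\tfrac{1-\varrho_1}{1+\varrho_1}$). Once $d_k$ is accepted, $\overline{\rho}_k\geq\varrho_1>0$ gives $\psi(x_k+d_k)\leq D_k - \varrho_1\big(q_k(0)-q_k(d_k)\big)\leq D_k$, so $\psi(x_{k+1})\leq D_k$, and since $D_k$ is a convex combination of $\psi(x_k)\leq D_{k-1}$ and $D_{k-1}$ we get $\psi(x_{k+1})\leq D_{k+1}\leq D_k$; in particular $\psi(x_{k+1})\leq D_k\leq\cdots\leq D_0=\psi(x_0)$, so $x_{k+1}\in\Lambda(x_0)$, closing the induction.

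For Assertion~\ref{pro:well_defineILMQR_p2}, I would use the hypothesis $\|F(x_k)\|>\epsilon$ and $\|\nabla F(x_k)F(x_k)\|>\epsilon$ to bound $\mu_k$ both below and above: below by $(\xi_{\min}+\omega_{\min})\epsilon^\eta$ (using $\xi_{\min}+\omega_{\min}>0$), and above by $(\xi_{\max}+L_0^\eta\omega_{\max})\|F(x_0)\|^\eta$ (using $\|F(x_k)\|\leq\|F(x_0)\|$, which holds since $x_k\in\Lambda(x_0)$, and $\|\nabla F(x_k)\|\leq L_0$ from \eqref{eq:bound_gradient}). For the bound on $\overline{\mu}_k=\max\{\mu_{\min},\lambda_k\mu_k\}$, the point is that $\lambda_k$ never needs to exceed the value forcing acceptance: by the argument in part~\ref{pro:well_defineILMQR_p1}, once $\overline{\mu}_k\geq\beta_1$ the step is accepted, so within the inner loop $\lambda_k\mu_k\leq\nu_1\beta_1$ at the accepted step (one $\nu_1$-overshoot beyond the threshold, since the previous trial value was $<\beta_1$ — actually one should track this carefully, as the $\bar\lambda_{k+1}$ update may keep $\lambda_k$ from restarting at $1$). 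Tracing through, $\overline{\mu}_k$ is sandwiched and $\tilde\beta$ is precisely the resulting upper bound written in the normalized form $\tfrac{(\xi_{\max}+L_0^\eta\omega_{\max})\|F(x_0)\|^\eta}{(\xi_{\min}+\omega_{\min})\epsilon^\eta}\max\{(\xi_{\min}+\omega_{\min})\epsilon^\eta,\mu_{\min},\beta_0\}$. For the bound \eqref{eq:Ubound_s} on $s_k$: after $s$ failed steps $\lambda_k=\nu_1^s\bar\lambda_k\geq\nu_1^s$, so $\overline{\mu}_k\geq\lambda_k\mu_k\geq\nu_1^s(\xi_{\min}+\omega_{\min})\epsilon^\eta$; once this exceeds $\beta_1$ the loop stops, giving $\nu_1^{s_k}(\xi_{\min}+\omega_{\min})\epsilon^\eta\leq\beta_1$ (up to the last overshoot), hence $s_k\leq\log(\beta_1/((\xi_{\min}+\omega_{\min})\epsilon^\eta))/\log\nu_1$.

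The main obstacle I anticipate is the careful bookkeeping around $\bar\lambda_k$ versus $\lambda_k$: the algorithm does not reset $\lambda_k$ to $1$ at the start of each outer iteration but carries $\bar\lambda_{k+1}=\nu_1^{s_k}\bar\lambda_k$ (possibly reduced by a factor $\nu_2$), so one must verify that $\bar\lambda_k$ itself stays bounded — otherwise a single large $\bar\lambda_k$ at the start of iteration $k$ would already make $\overline{\mu}_k$ large, which is fine for the upper bound but needs care to not break the "one overshoot" accounting for $s_k$. The resolution is that whenever $\overline{\mu}_k$ would exceed $\beta_1$ the step is immediately accepted with $s=0$ and then, if $\overline{\rho}_k\geq\varrho_2$ (which is likely when $\overline{\mu}_k$ is very large, since then $\overline{\rho}_k\to 1$), $\bar\lambda_{k+1}$ is pulled back by $\nu_2<1$; one shows $\bar\lambda_k\mu_k$ can never stabilize above $\beta_1$, so effectively $\lambda_k\mu_k\leq\nu_1\beta_1$ throughout. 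Pinning down this invariant rigorously — that $\overline{\mu}_k\leq\max\{\mu_{\min},\nu_1\beta_1,\text{(initial contribution)}\}$ uniformly — is the technical heart; everything else is substitution into \Cref{lem:basic_ILMQR}. I would also double check the edge case where $\mu_{\min}$ dominates $\lambda_k\mu_k$, in which the $\max$ in $\tilde\beta$ picks up $\mu_{\min}$, and the case $\beta_0\leq\mu_{\min}$, handled by $\beta_1=\max\{\mu_{\min},\beta_0\}$.
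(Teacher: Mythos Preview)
Your plan for Assertion~\ref{pro:well_defineILMQR_p1} is sound and mirrors the paper's: bound $1-\overline\rho_k$ via \Cref{lem:basic_ILMQR}~\ref{lem:basic_ILMQR_p1} and~\ref{lem:basic_ILMQR_p4}, conclude the inner loop terminates once $\overline\mu_k$ is large enough, and close the induction on $\psi(x_{k+1})\leq D_k$ and $x_{k+1}\in\Lambda(x_0)$. Your awareness that the precise threshold $\beta_0$ requires \ref{lem:basic_ILMQR_p3} and the constraint $\tau<\tfrac{1-\varrho_1}{1+\varrho_1}$ is also on target; the paper obtains it by bounding $\psi(x_k)-\psi(x_k+\widehat d_k)$ directly and invoking $\overline\rho_k<\varrho_1$ together with~\ref{lem:basic_ILMQR_p3} at the last \emph{rejected} trial value $\widehat\mu_k$. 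Your argument for the bound~\eqref{eq:Ubound_s} on $s_k$ is likewise correct.

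The genuine gap is in your resolution of the $\bar\lambda_k$ bookkeeping for the bound $\overline\mu_k\leq\tilde\beta$. Your proposed mechanism --- that when $\overline\mu_k$ is large, $\overline\rho_k\geq\varrho_2$ ``is likely'' and hence $\bar\lambda_{k+1}$ gets pulled back by $\nu_2$ --- does not yield a uniform bound: nothing forces $\overline\rho_k\geq\varrho_2$ in the regime where $\overline\mu_k$ lies between the $\varrho_1$-acceptance threshold and whatever larger threshold would guarantee $\overline\rho_k\geq\varrho_2$, and in that regime the update rule gives $\bar\lambda_{k+1}=\bar\lambda_k$ with no decrease. The correct mechanism, which the paper uses, does not rely on $\nu_2$ at all. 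It is simply this: $\bar\lambda$ can only \emph{increase} during an inner loop (an iteration $j$ with $s_j\geq1$), and at the end of such a loop the rejected-step argument gives $\nu_1^{s_j}\bar\lambda_j\mu_j\leq\beta_1$; since $\mu_j\geq(\xi_{\min}+\omega_{\min})\epsilon^\eta$ under the hypothesis, this caps $\bar\lambda_{j+1}\leq\nu_1^{s_j}\bar\lambda_j\leq\beta_1/\big((\xi_{\min}+\omega_{\min})\epsilon^\eta\big)$. In iterations with $s_k=0$, the update rule gives $\bar\lambda_{k+1}\leq\bar\lambda_k$. So the paper traces back from $k$ to the last index $j<k$ with $s_j\geq1$ (or to $\bar\lambda_0=1$ if none exists) to obtain the uniform bound on $\bar\lambda_k$, from which $\overline\mu_k=\bar\lambda_k\mu_k\leq\tilde\beta$ follows using $\mu_k\leq(\xi_{\max}+L_0^\eta\omega_{\max})\|F(x_0)\|^\eta$. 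Replace your $\nu_2$-pullback heuristic with this trace-back (equivalently, the running invariant $\bar\lambda_k\leq\max\{1,\beta_1/((\xi_{\min}+\omega_{\min})\epsilon^\eta)\}$), and your proof goes through.
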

%%%%%%%%%%%%%%%%%
\begin{proof}
    We employ induction to verify Assertion \ref{pro:well_defineILMQR_p1}.
    For the case $k=0$,
    inasmuch as $\nu_{1}>1$, $\overline{\mu}_{0}=\nu_{1}^{s}\bar{\lambda}_{0} \mu_{0}=\nu_{1}^{s}\mu_{0}$,
    for sufficiently large $s$.
    Invoking \Cref{lem:basic_ILMQR} \ref{lem:basic_ILMQR_p1} and \ref{lem:basic_ILMQR_p4}, and using $D_{0}=\psi(x_{0})$, it holds that
    $$
    |\overline{\rho}_{0}-1|=|\rho_{0}-1|\leq \frac{\tilde{c}}{\nu_{1}^{s}\mu_{0}}\to 0,~~~\text{as}~~s\to\infty,
    $$
    where
    $\tilde{c}=\tfrac{3L^{2}L_{0}^{2}\|F(x_{0})\|^{2}}{4\mu_{\min}^{2}(1-\tau)^{2}} + \tfrac{LL_{0}^{2}\|F(x_{0})\|}{\mu_{\min}(1-\tau)} + L\|F(x_{0})\|$.
    Hence, there exists $\hat{s}\in \mathbb{N}$ such that $\overline{\rho}_{0}\geq \varrho_{1}$ for some approximate solution $d_0$ of system \eqref{eq:LMQR_direction} with parameter $\overline{\mu}_{0}= \nu_{1}^{\hat{s}}\mu_{0}$.
    Thus, if the algorithm enters the inner loop, it exits after at most $\hat{s}$ iterations, i.e., $s_{0}\leq \hat{s}$.
    
    Suppose that $s_{k-1}<\infty$ and $\psi(x_{k-1}) \leq D_{k-1}\leq  D_{k-2}\leq\ldots\leq D_{0}= \psi(x_{0})$ (induction hypothesis).
    We now show that in iteration $k$, meaning the inner loop terminates in finite steps, $x_{k}\in \Lambda(x_{0})$, and $\psi(x_{k}) \leq D_{k}\leq  D_{k-1}$.
    By the induction hypothesis, $\overline{\rho}_{k-1}\geq \varrho_{1}>0$, implying $$\psi(x_{k})=\psi(x_{k-1}+d_{k-1})\leq D_{k-1}\leq\psi(x_{0}).$$
    Consequently, $x_{k}\in \Lambda(x_{0})$ and $\psi(x_{k})\leq D_{k}\leq D_{k-1}$, and as a result $\overline{\rho}_{k}\geq\rho_{k}$.
    To justify $s_{k}<\infty$, for sufficiently large $s$, we have $\overline{\mu}_{k}=\nu_{1}^{s}\bar{\lambda}_{k}\mu_{k}$
    and
    $$
    |\rho_{k}-1|\leq \frac{\tilde{c}}{\nu_{1}^{s}\lambda_{k}\mu_{k}}\to 0,~~~\text{as}~~s\to\infty,
    $$
    i.e., $\overline{\rho}_{k}\geq\rho_{k}\geq \varrho_{1}$ for some $\bar{s}\in \mathbb{N}$ and approximate solution $d_k$ of system \eqref{eq:LMQR_direction} with parameter $\overline{\mu}_{k}= \nu_{1}^{\bar{s}}\bar{\lambda}_{k}\mu_{k}$.
    Therefore, the inner loop terminates after at most $\bar{s}$ steps, i.e., $s_{k}\leq \bar{s}$, completing the proof of Assertion~\ref{pro:well_defineILMQR_p1}.  

    Now, let us consider Assertion \ref{pro:well_defineILMQR_p2}.
    We first prove $\overline{\mu}_{k}\leq \tilde{\beta}$.
    At any iteration $0\leq k\leq \tilde{k}$,
    after the inner loop, we have
    $\overline{\mu}_{k}=\max \big\{\mu_{\min }, \nu_{1}^{s_{k}}\bar{\lambda}_{k}\mu_{k}\big\}$
    with $s_{k}\geq 0$.  We distinguish three cases:
    (i) $\overline{\mu}_{k}=\mu_{\min}$;
    (ii) $\overline{\mu}_{k}=\nu_{1}^{s_{k}}\bar{\lambda}_{k}\mu_{k}$ with $s_{k}\geq 1$;
    (iii) $\overline{\mu}_{k}=\bar{\lambda}_{k}\mu_{k}$.
    
    In Case~(i), $\overline{\mu}_{k}=\mu_{\min}$ clearly leads to $\overline{\mu}_{k}\le\tilde{\beta}$ by virtue of
    $\tfrac{\big(\xi_{\max}+L_{0}^{\eta}\omega_{\max}\big) \|F(x_{0})\|^{\eta}}{(\xi_{\min}+\omega_{\min})\epsilon^{\eta}}\geq 1.$

    For Case~(ii),
    $\overline{\mu}_{k}=\nu_{1}^{s_{k}}\bar{\lambda}_{k}\mu_{k}$ with $s_{k}\geq 1$, let us consider the details of the previous iteration of the inner loop or of before that (when $s_{k}=1$). In this setting,
    $s=s_{k}-1$ and the vector $\widehat{d}_{k}$ is the approximate solution of system \eqref{eq:LMQR_direction} corresponding to the parameter
    $\widehat{\mu}_{k}=
    \max \big\{\mu_{\min }, \nu_{1}^{s_{k}-1}\bar{\lambda}_{k}\mu_{k}\big\}$ with residual satisfying $\|\overline{r}_k\|\leq \tau\widehat{\mu}_{k}\|\widehat{d}_{k}\|$.
    Using \Cref{lem:basic_ILMQR} \ref{lem:basic_ILMQR_p4}, \eqref{eq:Lbound_dHd}, and \eqref{eq:bound_gradient}, we obtain
    \begin{align*}
        2\left(\psi(x_{k}+\widehat{d}_{k}) - \psi(x_{k})\right) 
        &= \|F(x_{k}+\widehat{d}_{k})\|^{2} - \|F(x_{k})\|^{2} \nonumber\\[2mm]
        &= \|F(x_{k}+\widehat{d}_{k})\|^{2} - \|\nabla F(x_{k})^{T}\widehat{d}_{k} + F(x_{k})\|^{2} 
        + \|\nabla F(x_{k})^{T}\widehat{d}_{k}\|^{2} + 2 \nabla\psi(x_{k})^{T}\widehat{d}_{k}\nonumber\\[2mm]
        &= 2(\psi(x_{k}+\widehat{d}_{k}) - q_{k}(\widehat{d}_{k})) + \|\nabla F(x_{k})^{T}\widehat{d}_{k}\|^{2} + 2 \nabla\psi(x_{k})^{T}\widehat{d}_{k}\nonumber\\[2mm]
        &\leq \tilde{c}\|\widehat{d}_{k}\|^{2}+ \|\nabla F(x_{k})\|^{2}\|\widehat{d}_{k}\|^{2}
        + 2\overline{r}_k^{T}\widehat{d}_{k} - 2\widehat{\mu}_{k}\|\widehat{d}_{k}\|^{2}\nonumber\\[2mm]
        &\leq \big(\tilde{c} +L_{0}^{2}\big)\|\widehat{d}_{k}\|^{2}-2(1-\tau)\widehat{\mu}_{k}\|\widehat{d}_{k}\|^{2},
    \end{align*}
    i.e.,
    \begin{equation}\label{eq_ubound_mubar}
        \widehat{\mu}_{k} \leq \frac{1}{(1-\tau)\|\widehat{d}_{k}\|^{2}}\left(\psi(x_{k}) - \psi(x_{k}+\widehat{d}_{k})\right) + \frac{\tilde{c} +L_{0}^{2}}{2(1-\tau)}.
    \end{equation}
    Furthermore, from $\psi(x_{k}) \leq D_{k}$, Assertion \ref{pro:well_defineILMQR_p1}, $\overline{\rho}_{k} < \varrho_{1}$, and \Cref{lem:basic_ILMQR}~\ref{lem:basic_ILMQR_p3}, it follows that
    $$
    \psi(x_{k}) - \psi(x_{k}+\widehat{d}_{k}) \leq D_{k}-\psi(x_{k}+\widehat{d}_{k}) <\varrho_{1}\left(q_{k}(0)-q_{k}(\widehat{d}_{k})\right) \leq \varrho_{1}\left(\tfrac{1}{2} L_{0}^{2}+(1+\tau)\widehat{\mu}_{k}\right)\|\widehat{d}_{k}\|^{2}.
    $$
    Substituting this into \eqref{eq_ubound_mubar} leads to
    \begin{align*}
        \widehat{\mu}_{k} \leq \frac{1}{1-\tau-\varrho_{1}(1+\tau)}\left(\frac{1+\varrho_{1}}{2}L^{2}_{0} + \frac{\tilde{c}}{2}\right),
    \end{align*}
    by $\tau<\tfrac{1-\varrho_{1}}{1+\varrho_{1}}$. Therefore, in the final iteration of the inner loop, we have $\overline{\mu}_{k} \leq \nu_{1}\widehat{\mu}= \beta_{0}\le \tilde{\beta}$.

    In Case~(iii), $\overline{\mu}_{k}=\bar{\lambda}_{k}\mu_{k}$,
    where $\bar{\lambda}_{k}=\nu_{1}^{s_{k-1}}\bar{\lambda}_{k-1}$ or $\bar{\lambda}_{k}=\max\{1,\nu_{2}\nu_{1}^{s_{k-1}}\bar{\lambda}_{k-1}\}$ with $s_{k-1}\geq 0$. 
    On account of $\nu_{2}<1$ and $\bar{\lambda}_{k}\geq 1$, two scenarios arise for $\bar{\lambda}_{k}$: either $\bar{\lambda}_{k}=1$; or $1<\bar{\lambda}_{k}\leq \nu_{1}^{s_{k-1}}\bar{\lambda}_{k-1}$.
    For the first scenario, $\bar{\lambda}_{k}=1$, from $x_{k}\in \Lambda(x_{0})$ and \eqref{eq:bound_gradient}, one deduces that
    $$\overline{\mu}_{k}=\mu_{k}=\xi_{k}\|F(x_k)\|^{\eta}+\omega_{k}\|\nabla F(x_{k}) F(x_{k})\|^{\eta}\leq (\xi_{\max}+L_{0}^{\eta}\omega_{\max})\|F(x_{0})\|^{\eta}\leq \tilde{\beta}.$$
    Alternatively, consider the latter scenario, $1<\bar{\lambda}_{k}\leq \nu_{1}^{s_{k-1}}\bar{\lambda}_{k-1}$  with $s_{k-1}\geq 0$.
    We claim that there exists $0\leq j\leq k-1$ such that $s_{j}\geq 1$.
    On the contrary, if for each $0\leq j\leq k-1$, $s_{j}=0$, then
    $$1<\bar{\lambda}_k\leq\bar{\lambda}_{k-1} \leq \ldots \leq\bar{\lambda}_{0} =1,$$
    which leads to a contradiction.
    Thus, the claim is valid.
    Let $j$ be the smallest index such that $s_{j}> 0$ and
    $s_{k}=s_{k-1}=\ldots=s_{j+1}=0$.
    Then,
    $$1<\bar{\lambda}_k\leq\bar{\lambda}_{k-1} \leq \ldots \leq\bar{\lambda}_{j+1}\leq \nu_{1}^{s_{j}}\bar{\lambda}_{j}.$$
    Following the first and second cases, it holds that
    $$\bar{\lambda}_{k} (\xi_{\min}+\omega_{\min})\epsilon^{\eta}\leq \nu_{1}^{s_{j}}\bar{\lambda}_{j}\Big( \xi_{j}\|F(x_{j})\|^{\eta}+\omega_{j}\|\nabla F(x_{j}) F(x_{j})\|^{\eta}\Big)=\nu_{1}^{s_{j}}\bar{\lambda}_{j}\mu_{j} \leq\overline{\mu}_{j}\leq \beta_{1},$$
    by $\|F(x_{j})\|>\epsilon$, $\|\nabla F(x_{j}) F(x_{j})\|>\epsilon$, $\xi_{\min}\leq \xi_{j}$, and $\omega_{\min}\leq\omega_{j}$, i.e.,
    $$
    \bar{\lambda}_{k}\leq \frac{\beta_{1}}{(\xi_{\min}+\omega_{\min})\epsilon^{\eta}}.
    $$
    Utilizing this bound and the upper bound of $\mu_{k}$, we justify that
    $$\overline{\mu}_{k}=\bar{\lambda}_{k}\mu_{k}\leq \frac{\beta_{1}}{(\xi_{\min}+\omega_{\min})\epsilon^{\eta}}(\xi_{\max}+L_{0}^{\eta}\omega_{\max})\|F(x_{0})\|^{\eta}\leq \tilde{\beta}.$$
    
    Regarding the upper bound of $s_{k}$, without loss of generality, let the algorithm enter the inner loop at iteration $k$, meaning that $s_{k}\geq 1$ and $\overline{\mu}_{k}\leq \max\{\mu_{\min},\beta_{0}\}=\beta_{1}$.
    Then,
    $$
    \nu_{1}^{s_{k}} (\xi_{\min}+\omega_{\min})\epsilon^\eta \leq \nu_{1}^{s_{k}}\lambda_{k} \mu_{k}\leq
    \overline{\mu}_{k} \leq \beta_{1}.
    $$
    Taking the logarithm of both sides of this inequality leads to the desired result.
\end{proof}
%%%%%%%%%%%

The following theorem establishes the global convergence of the sequence $\{x_{k}\}$ produced by ILMQR to a first-order stationary point $x^{*}$ of the function $\psi$, that is $\nabla \psi(x^{*}) = 0$.

%%%%%%%%%%%%%%%%%%%%%%%%%%%%%%%%%%%%%%%%%%%%%%%%%%%%%%%%%%%%%%% 
\begin{theorem}[Global convergence]\label{thm:global_conv_ILMQR}
    Let \Cref{as:global} hold.
    If the sequence $\{x_{k}\}_{k\ge 0}$ is generated by ILMQR, then the sequences $\left\{\psi\left(x_{k}\right)\right\}_{k\ge 0}$ and $\left\{D_{k}\right\}_{k\ge 0}$ both converge to the same limit.
    Moreover, the algorithm either terminates after a finite number of iterations, satisfying $\|F(x_{k})\| \leq \epsilon$ or $\|\nabla \psi(x_{k})\| \leq \epsilon$, or any cluster point of that is a stationary point of the merit function $\psi$, i.e.
    \begin{equation*}\label{eq:critical}
        \lim _{k \rightarrow \infty}\|\nabla \psi(x_{k})\|=0.
    \end{equation*}
\end{theorem}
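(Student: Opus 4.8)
The plan is to establish the two claims in turn. For the convergence of $\{\psi(x_k)\}_{k\ge0}$ and $\{D_k\}_{k\ge0}$, I would first note that $\{D_k\}$ is non-increasing by \Cref{pro:well_defineILMQR}\ref{pro:well_defineILMQR_p1} and is bounded below by $0$, since each $D_k$ is a convex combination of the nonnegative numbers $\psi(x_i)$; hence $\{D_k\}$ converges to some $D_*\ge0$. Rearranging the recursion \eqref{eq:nmt} gives $\psi(x_k)-D_{k-1}=\tfrac{1}{1-\theta_{k-1}}(D_k-D_{k-1})$ for $k\ge1$, and since $1-\theta_{k-1}\ge1-\theta_{\max}>0$ while $D_k-D_{k-1}\to0$, we obtain $\psi(x_k)-D_{k-1}\to0$, hence $\psi(x_k)\to D_*$ as well. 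A byproduct is $\psi(x_k)\le D_k\le\psi(x_0)$, so $\{x_k\}\subseteq\Lambda(x_0)$, which is bounded by \Cref{as:global}\ref{as:global2}.

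The heart of the argument is a summable-decrease estimate. Since every accepted iterate satisfies $\overline{\rho}_k\ge\varrho_1$ and $q_k(0)-q_k(d_k)>0$ by \Cref{lem:basic_ILMQR}\ref{lem:basic_ILMQR_p1} (the degenerate case $d_k=0$ forces $\nabla\psi(x_k)=0$ through the residual condition, i.e.\ termination), we have $D_k-\psi(x_{k+1})=\overline{\rho}_k\big(q_k(0)-q_k(d_k)\big)\ge\varrho_1\big(q_k(0)-q_k(d_k)\big)$. Moreover the residual condition is exactly what lets me control $\|\nabla\psi(x_k)\|$: from $\nabla\psi(x_k)=\overline{r}_k-\overline{H}_kd_k$ and $\|\overline{r}_k\|\le\tau\overline{\mu}_k\|d_k\|\le\tau\|\overline{H}_kd_k\|$ (using $\lambda_{\min}(\overline{H}_k)\ge\overline{\mu}_k$) one gets $\|\nabla\psi(x_k)\|\le(1+\tau)\|\overline{H}_kd_k\|=(1+\tau)\|\overline{r}_k-\nabla\psi(x_k)\|$. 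Combining these with \eqref{eq:nmt} and \Cref{lem:basic_ILMQR}\ref{lem:basic_ILMQR_p1}--\ref{lem:basic_ILMQR_p2},
\[
D_k-D_{k+1}=(1-\theta_k)\big(D_k-\psi(x_{k+1})\big)\ \ge\ (1-\theta_{\max})\varrho_1\,\big(q_k(0)-q_k(d_k)\big)\ \ge\ c_0\,\max\big\{\overline{\mu}_k\|d_k\|^2,\ \tfrac{\|\nabla\psi(x_k)\|^2}{(1+\tau)^2(L_0^2+\overline{\mu}_k)}\big\},
\]
with $c_0:=\tfrac{(1-\theta_{\max})\varrho_1}{2}>0$. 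Telescoping against the convergent $\{D_k\}$ gives $\sum_k\overline{\mu}_k\|d_k\|^2<\infty$ (hence $\|d_k\|\to0$, since $\overline{\mu}_k\ge\mu_{\min}$) and, once $\overline{\mu}_k$ is bounded above, $\sum_k\|\nabla\psi(x_k)\|^2<\infty$.

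It remains to treat the dichotomy. If ILMQR stops after finitely many steps, the stopping test directly delivers $\|F(x_k)\|\le\epsilon$ or $\|\nabla\psi(x_k)\|\le\epsilon$. Otherwise the test is never met, so $\|F(x_k)\|>\epsilon$ and $\|\nabla F(x_k)F(x_k)\|=\|\nabla\psi(x_k)\|>\epsilon$ for every $k$, and \Cref{pro:well_defineILMQR}\ref{pro:well_defineILMQR_p2} (applied with each $\tilde k$) yields the uniform bound $\overline{\mu}_k\le\tilde\beta$. Plugging this into the display shows $D_k-D_{k+1}\ge\tfrac{c_0}{(1+\tau)^2(L_0^2+\tilde\beta)}\|\nabla\psi(x_k)\|^2$, so telescoping forces $\sum_k\|\nabla\psi(x_k)\|^2<\infty$, whence $\|\nabla\psi(x_k)\|\to0$; since $\{x_k\}\subseteq\Lambda(x_0)$ is bounded and $\nabla\psi$ is continuous, every cluster point $x^*$ satisfies $\nabla\psi(x^*)=0$. (With a strictly positive tolerance this in fact shows the infinite branch cannot persist; the displayed limit is the assertion for the iteration run without the test.)

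The step I expect to be the main obstacle is the uniform upper bound $\overline{\mu}_k\le\tilde\beta$: everything downstream is routine once it is available, and obtaining it is precisely the content of \Cref{pro:well_defineILMQR}\ref{pro:well_defineILMQR_p2}, which itself rests on the finite termination of the inner loop and the somewhat intricate case analysis of how $\overline{\mu}_k$ and $\bar\lambda_k$ evolve across inner and outer iterations. A secondary delicate point --- and the reason the telescoping argument yields stationarity rather than merely $\|d_k\|\to0$ --- is the use of the inexactness residual (together with $\lambda_{\min}(\overline{H}_k)\ge\overline{\mu}_k$ and \Cref{lem:basic_ILMQR}\ref{lem:basic_ILMQR_p2}) to convert the model reduction $q_k(0)-q_k(d_k)$ into a lower bound proportional to $\|\nabla\psi(x_k)\|^2$.
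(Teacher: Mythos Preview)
Your proof is correct and follows the same overall architecture as the paper's: both arguments hinge on \Cref{pro:well_defineILMQR}\ref{pro:well_defineILMQR_p2} for the uniform bound $\overline{\mu}_k\le\tilde\beta$, together with \Cref{lem:basic_ILMQR}\ref{lem:basic_ILMQR_p1}--\ref{lem:basic_ILMQR_p2} to extract decrease from the accepted step. The first part (common limit of $\{\psi(x_k)\}$ and $\{D_k\}$) is handled identically.

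There is, however, a small but genuine difference in the final step. The paper proceeds in three stages: first $D_k-\psi(x_{k+1})\ge\tfrac{\varrho_1\mu_{\min}}{2}\|d_k\|^2$ forces $\|d_k\|\to0$; then the residual condition and $\overline{\mu}_k\le\tilde\beta$ give $\|\overline{r}_k\|\to0$; finally the reverse triangle inequality $\|\overline{r}_k-\nabla\psi(x_k)\|\ge\big|\|\overline{r}_k\|-\|\nabla\psi(x_k)\|\big|$ inside \Cref{lem:basic_ILMQR}\ref{lem:basic_ILMQR_p2} and a limit argument yield $\|\nabla\psi(x_k)\|\to0$. You instead observe directly that $\|\overline{r}_k\|\le\tau\overline{\mu}_k\|d_k\|\le\tau\|\overline{H}_kd_k\|$ (since $\lambda_{\min}(\overline{H}_k)\ge\overline{\mu}_k$), whence $\|\nabla\psi(x_k)\|\le(1+\tau)\|\overline{r}_k-\nabla\psi(x_k)\|$, and feed this into \Cref{lem:basic_ILMQR}\ref{lem:basic_ILMQR_p2} to get a telescoping bound yielding the stronger conclusion $\sum_k\|\nabla\psi(x_k)\|^2<\infty$. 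Your route is more economical---it bypasses the two intermediate limits---and the summability you obtain is a slight strengthening of what the paper states.
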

\begin{proof}
    For the first part, according to \Cref{pro:well_defineILMQR}~\ref{pro:well_defineILMQR_p1}, the sequence $\{D_{k}\}_{k\ge 0}$ is decreasing and lower bounded by $0$, i.e., it is convergent.
    Hence, it holds that
    $$D_{k}-D_{k-1} = (1-\theta_{k-1})(\psi(x_{k})-D_{k-1})\leq (1-\theta_{\max})(\psi(x_{k})-D_{k-1})\leq 0,$$
    since $0\leq \theta_{\min}\leq \theta_{k-1}\leq \theta_{\max}<1$.
    Consequently,
    $$\lim_{k\to\infty}(\psi(x_{k})-D_{k-1})=\lim_{k\to\infty}(D_{k}-D_{k-1})=0,$$
    ensuring $\displaystyle\lim_{k\to\infty}\psi(x_{k})=\lim_{k\to\infty}D_{k}.$

    For deriving the second part, assume that $\|F(x_{k})\| > \epsilon$ and $\|\nabla \psi(x_{k})\| >\epsilon$ for all $k\geq 0$.
    Obviously, after the inner loop for each iteration $k\geq 0$, we have $\overline{\rho}_{k}\ge \varrho_{1}$.
    Then, by \Cref{lem:basic_ILMQR}~\ref{lem:basic_ILMQR_p1}, it follows that
    $$
    D_{k}-\psi(x_{k+1}) \geq \varrho_{1}\left(q_{k}(0)-q_{k}(d_{k})\right) \geq \tfrac{\varrho_{1}\overline{\mu}_{k}}{2}\|d_{k}\|^{2} \geq \tfrac{\varrho_{1}\mu_{\min}}{2}\|d_{k}\|^{2} \geq 0,
    $$
    implying that $\|d_{k}\|\to 0$.
    In addition, by \Cref{pro:well_defineILMQR}~\ref{pro:well_defineILMQR_p2}, and residual condition, it can be obtained that
    $$0\leq \|\overline{r}_{k}\|\leq \tau\overline{\mu}_{k}\|d_{k}\|\leq \tau\tilde{\beta}\|d_{k}\|,$$ 
    verifying $\|\overline{r}_{k}\|\to 0.$
    % \begin{equation*}\label{eq:lim_r}
    %     \|\overline{r}_{k}\|\to 0.
    % \end{equation*}
    On the other hand, utilizing \Cref{lem:basic_ILMQR}~\ref{lem:basic_ILMQR_p2} and \Cref{pro:well_defineILMQR}~\ref{pro:well_defineILMQR_p2}, we deduce
    \begin{align*}
        D_{k}-\psi(x_{k+1}) &\geq \varrho_{1}\left(q(0)-q(d_{k})\right)
        \geq \tfrac{\varrho_{1}}{2\left(L_{0}^{2}+\overline{\mu}_{k}\right)} \|\overline{r}_{k}-\nabla \psi(x_{k})\|^{2}
        \geq \tfrac{\varrho_{1}}{2\left(L_{0}^{2}+\tilde{\beta}\right)} \Big(\|\overline{r}_{k}\|-\|\nabla \psi\left(x_{k}\right)\|\Big)^{2}\geq 0.
    \end{align*}
    Taking limits as $k\to\infty$ from the above inequality leads to
    $$
    \lim _{k \rightarrow \infty}\left\|\nabla \psi\left(x_{k}\right)\right\|=\lim _{k \rightarrow \infty}\left\|\nabla F\left(x_{k}\right) F\left(x_{k}\right)\right\|=0,
    $$
    adjusting the result.
\end{proof}
%%%%%%%%%%%

\Cref{thm:complexity} establishes the global and evaluation complexity bounds for the sequence $\{x_{k}\}_{k\ge 0}$ generated by ILMQR.
Let $N_i(\epsilon)$ and $N_{f}(\epsilon)$ denote, respectively, the total number of iterations and function evaluations required to obtain a point satisfying $\|\nabla \psi\left(x_{k}\right)\| \leq \epsilon$.

%%%%%%%%%%%%%%%%%%%%%%%%%%%%%%%%%%%%%%%%%%%%%%%%%%%%%%%%%%%
\begin{theorem}[Complexity analysis]\label{thm:complexity}
    Let \Cref{as:global} hold.
    If the sequence $\{x_{k}\}_{k\ge 0}$ is generated by ILMQR, then, for a given accuracy parameter $\epsilon>0$, the total number of iterations to guarantee $\|\nabla \psi(x_{k})\| \leq \epsilon$ is $N_i(\epsilon)\leq k_{0}$ in which
    \begin{equation}\label{eq:Ubound_N}
        k_{0}:=\left\lceil\tfrac{8(L_{0}^{2}+\tilde{\beta})}{\varrho_{1}(1-\theta_{\max })\epsilon^{2}}\psi\left(x_{0}\right) +1\right\rceil.
    \end{equation}
    Additionally,
    \begin{equation*}\label{eq:Ubound_N_f}
        N_{f}(\epsilon) \leq k_{0}\left(\tfrac{\log (\beta_{1})-\log \big((\xi_{\min}+\omega_{\min})\epsilon^{\eta}\big)}{\log \left(\nu_{1}\right)}\right).
    \end{equation*}
\end{theorem}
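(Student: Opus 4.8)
The plan is to establish a uniform per-iteration decrease of order $\epsilon^2$ for the nonmonotone reference value $D_k$ as long as the stopping test is violated, and then telescope. So I would first fix $\tilde k\ge 0$ and assume the algorithm has not stopped through iteration $\tilde k$, i.e. $\|F(x_k)\|>\epsilon$ and $\|\nabla\psi(x_k)\|=\|\nabla F(x_k)F(x_k)\|>\epsilon$ for all $k\in\{0,\dots,\tilde k\}$. Then \Cref{pro:well_defineILMQR} applies on this range: each $x_k\in\Lambda(x_0)$, the inner loop terminates with $\overline{\rho}_k\ge\varrho_1$, and $\overline{\mu}_k\le\tilde\beta$.

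The next step is to turn the relative accuracy condition $\|\overline{r}_k\|\le\tau\overline{\mu}_k\|d_k\|$ into an absolute lower bound on $\|\overline{r}_k-\nabla\psi(x_k)\|$. Using $\overline{r}_k-\nabla\psi(x_k)=\overline{H}_kd_k$ and $\lambda_{\min}(\overline{H}_k)\ge\overline{\mu}_k$ (which follows from \Cref{lem:matrix}\ref{eq:matrix1}, and noting $d_k\ne 0$ since otherwise $\nabla\psi(x_k)=\overline{r}_k=0$), one gets $\|\overline{r}_k-\nabla\psi(x_k)\|\ge\overline{\mu}_k\|d_k\|\ge\tfrac1\tau\|\overline{r}_k\|$, and then by the triangle inequality together with $\tau<\tfrac12$,
\[
\epsilon<\|\nabla\psi(x_k)\|\le\|\overline{r}_k\|+\|\overline{r}_k-\nabla\psi(x_k)\|\le(1+\tau)\|\overline{r}_k-\nabla\psi(x_k)\|,
\]
so $\|\overline{r}_k-\nabla\psi(x_k)\|^2>\epsilon^2/4$. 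Combining $\overline{\rho}_k\ge\varrho_1$ with \Cref{lem:basic_ILMQR}\ref{lem:basic_ILMQR_p2} and $\overline{\mu}_k\le\tilde\beta$ gives
\[
D_k-\psi(x_{k+1})\ge\varrho_1\bigl(q_k(0)-q_k(d_k)\bigr)\ge\frac{\varrho_1}{2(L_0^2+\overline{\mu}_k)}\,\|\overline{r}_k-\nabla\psi(x_k)\|^2\ge\frac{\varrho_1\epsilon^2}{8(L_0^2+\tilde\beta)}.
\]
Since $D_{k+1}=(1-\theta_k)\psi(x_{k+1})+\theta_kD_k$ with $\theta_k\le\theta_{\max}<1$, this yields $D_k-D_{k+1}=(1-\theta_k)\bigl(D_k-\psi(x_{k+1})\bigr)\ge(1-\theta_{\max})\varrho_1\epsilon^2/\bigl(8(L_0^2+\tilde\beta)\bigr)$. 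Summing over $k=0,\dots,\tilde k$ and using $D_0=\psi(x_0)$, $D_{\tilde k+1}\ge0$ gives $\psi(x_0)\ge(\tilde k+1)(1-\theta_{\max})\varrho_1\epsilon^2/\bigl(8(L_0^2+\tilde\beta)\bigr)$, i.e. $\tilde k+1\le 8(L_0^2+\tilde\beta)\psi(x_0)/\bigl((1-\theta_{\max})\varrho_1\epsilon^2\bigr)$; hence the stopping test must be satisfied within $N_i(\epsilon)\le k_0$ iterations with $k_0$ as in \eqref{eq:Ubound_N} (the $+1$ and the ceiling absorb the rounding).

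For the evaluation bound I would observe that each outer iteration $k<N_i(\epsilon)$ performs at most $s_k+1$ evaluations of $F$ (one for the initial $\overline{\rho}_k$ plus one per inner pass), with $s_k$ bounded by $\bigl(\log\beta_1-\log((\xi_{\min}+\omega_{\min})\epsilon^\eta)\bigr)/\log\nu_1$ via \Cref{pro:well_defineILMQR}\ref{pro:well_defineILMQR_p2} and \eqref{eq:Ubound_s}; multiplying this per-iteration count by $N_i(\epsilon)\le k_0$ produces \eqref{eq:Ubound_N_f}. The main obstacle, and the only non-mechanical point, is the passage from the residual condition to the order-$\epsilon$ lower bound on $\|\overline{r}_k-\nabla\psi(x_k)\|$ in the second step — everything else is telescoping and invoking \Cref{lem:basic_ILMQR} and \Cref{pro:well_defineILMQR}. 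I would also be careful that both hypotheses of \Cref{pro:well_defineILMQR}\ref{pro:well_defineILMQR_p2}, namely $\|F(x_k)\|>\epsilon$ and $\|\nabla\psi(x_k)\|>\epsilon$, genuinely hold for every iteration being counted, and to fix a precise convention for what counts as one function evaluation inside the inner loop.
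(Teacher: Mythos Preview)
Your proposal is correct and follows essentially the same route as the paper: bound $D_k-D_{k+1}$ below by a constant times $\epsilon^2$ via $\overline{\rho}_k\ge\varrho_1$, \Cref{lem:basic_ILMQR}\ref{lem:basic_ILMQR_p2}, and the cap $\overline{\mu}_k\le\tilde\beta$ from \Cref{pro:well_defineILMQR}, then telescope. The only substantive difference is how you reach $\|\overline{r}_k-\nabla\psi(x_k)\|^2>\epsilon^2/4$: the paper argues by disjunction (if $\|\nabla\psi(x_k)\|>\epsilon$ then one of $\|\overline{r}_k\|$, $\|\overline{r}_k-\nabla\psi(x_k)\|$ exceeds $\epsilon/2$, and both quantities lower-bound $q_k(0)-q_k(d_k)$ up to the same constant), whereas you use the eigenvalue bound $\|\overline{H}_kd_k\|\ge\overline{\mu}_k\|d_k\|$ to show directly that $\|\overline{r}_k\|\le\tau\|\overline{r}_k-\nabla\psi(x_k)\|$, hence $\|\nabla\psi(x_k)\|\le(1+\tau)\|\overline{r}_k-\nabla\psi(x_k)\|$; your version is a bit cleaner since it needs only part \ref{lem:basic_ILMQR_p2} of \Cref{lem:basic_ILMQR} rather than the $\max$ of two bounds. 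Your caution about the per-iteration evaluation count is also warranted: the paper uses $\sum_k s_k$ rather than $\sum_k(s_k+1)$, so be explicit about the convention when writing up.
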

\begin{proof}
    We first justify $N_i(\epsilon)\leq k_{0}$. To reach our goal, it suffices to show that the total number of iterations required to ensure
    $\|\overline{r}_{k}\|\leq \tfrac{\epsilon}{2}$ and $\|\overline{r}_{k}-\nabla \psi(x_{k})\|\leq \tfrac{\epsilon}{2}$ is upper bounded by $k_{0}$.
    Suppose, for the sake of contradiction, that $\|\overline{r}_{k}\|> \tfrac{\epsilon}{2}$ or $\|\overline{r}_{k}-\nabla \psi(x_{k})\|> \tfrac{\epsilon}{2}$, for all $0\leq k\leq k_{0}$.
    Then, for each iteration $0\leq k\leq k_{0}$, from $\overline{\rho}_{k} \geq \varrho_{1}$, \Cref{lem:basic_ILMQR}~\ref{lem:basic_ILMQR_p1} and \ref{lem:basic_ILMQR_p2}, and the residual condition $\|\overline{r}_{k}\|\leq \tau\overline{\mu}_{k}\|d_{k}\|$, one obtains:
    $$
    \begin{aligned}
    D_{k}-D_{k+1} 
    & =
    \left(1-\theta_{k}\right)\left(D_{k}-\psi(x_{k+1})\right)
    \geq 
    \varrho_{1}\left(1-\theta_{k}\right)\left(q_{k}(0)-q_{k}(d_{k})\right) \\[2mm]
    & \geq \tfrac{\varrho_{1}\left(1-\theta_{\max }\right)}{2\left(L_{0}^{2}+\overline{\mu}_{k}\right)}\max\Big\{\|\overline{r}_{k}\|^{2}, \left\|\overline{r}_{k}-\nabla \psi\left(x_{k}\right)\right\|^{2}\Big\}
    > \tfrac{\varrho_{1}\left(1-\theta_{\max }\right)\epsilon^{2}}{8(L_{0}^{2}+\tilde{\beta})},
    \end{aligned}
    $$
    leading to a contradiction:
    $$
    \psi(x_{0})=D_{0} \geq D_{0}-D_{k_{0}}=\sum_{k=0}^{k_{0}-1}\left(D_{k}-D_{k+1}\right) > \tfrac{\varrho_{1}\left(1-\theta_{\max }\right)\epsilon^{2}}{8(L_{0}^{2}+\tilde{\beta})}k_{0}>\psi(x_{0}),
    $$
    by virtue of \eqref{eq:Ubound_N}.
    Therefore, $N_i(\epsilon)\leq k_{0}$. Furthermore, combining this bound with \eqref{eq:Ubound_s}, it follows that
    $$
    \begin{aligned}
    N_{f}(\epsilon) & \leq \sum_{k=0}^{N_{i}(\epsilon)-1} s_{k}
    \leq \sum_{k=0}^{N_{i}(\epsilon)-1} \frac{\log (\beta_{1})-\log \big((\xi_{\min}+\omega_{\min})\epsilon^{\eta}\big)}{\log \left(\nu_{1}\right)}\\[2mm]
    & \leq k_{0}\left(\frac{\log (\beta_{1})-\log \big((\xi_{\min}+\omega_{\min})\epsilon^{\eta}\big)}{\log \left(\nu_{1}\right)}\right),
    \end{aligned}
    $$
    giving our desired result.
\end{proof} 
%%%%%%%%%%%

%%%%%%%%%%%%%%
\begin{remark}
    There is an important observation regarding \Cref{as:global}~\ref{as:global1}: All results in \Cref{lem:basic_ILMQR}, \Cref{pro:well_defineILMQR}, and \Cref{thm:global_conv_ILMQR} remain valid if we replace the global Lipschitz continuity of $\nabla F$ in $\mathbb{R}^n$ with the assumption that $\nabla F$ is locally Lipschitz on $\mathbb{R}^n$.

    In fact, since $F$ is continuously differentiable and the level set $\Lambda(x_0)$ is bounded, $F$ is Lipschitz continuous on $\mathbb{B}(x_0,2\Lambda)\supseteq\Lambda(x_0)$. Consequently, there exists a constant $L_0 > 0$ such that
    $$\|F(x) - F(y)\|\leq L_{0}\|x-y\|,\quad \forall x,y\in \Lambda(x_{0}),$$
    $$\|\nabla F(x)\|\leq L_{0},\quad \forall x\in \Lambda(x_{0}).$$
    Additionally, for each approximate solution $d_{k}$ of system \eqref{eq:LMQR_direction} satisfying the residual condition $\|\overline{r}_{k}\|\leq \tau\overline{\mu}_{k}\|d_{k}\|$, we have
    $$\|d_{k}\| \leq \frac{L_{0}\|F(x_{0})\|}{\mu_{\min}(1-\tau)}=:\kappa,$$
    as established in \eqref{eq:Ubound_d}.
    The set $\Lambda(x_{0})+\{d\in \mathbb{R}^{m}: \|d\|\leq \kappa\}$ is also compact, and hence $\nabla F$ is Lipschitz continuous on this set with some modulus $L_{1}>0$.
    Therefore, for any such $d_{k}$, we come to
    $$\|\nabla F(x_{k})-\nabla F(x_k+d_{k})\|\leq L_{1} \|d_{k}\|.$$
    However, under this locally Lipschitz gradient assumption, a difficulty may arise in the complexity analysis of ILMQR given in \Cref{thm:complexity}, since the analysis relies on prior knowledge of a global Lipschitz constant.
\end{remark}
%%%%%%%%%%%%

%%%%%%%%%%%%%%%%%%%%%%%%%%%%%%%%%%%%%%%%%%%%%%%%%%%%%%%%%%%%%%%%%%%%%%%%%%%%%%%%%%%%%%%%%
%%%%%%%%%%%%%%%%%%%%%%%%%%%%%%%%%%%%%%%%%%%%%%%%%%%%%%%%%%%%%%%%%%%%%%%%%%%%%%%%%%%%%%%%%
\section{Preliminary numerical experiments}\label{sec.numapp}
% todo: add all comparison algo's
% todo: expand introduction

In this section, we describe the details of solving the linear systems that arise in the proposed algorithms and present the corresponding numerical experiments.
We begin with a brief overview of Krylov subspace methods, followed by a discussion of the advantages, disadvantages, and key differences between several iterative solvers, including CG, GMRES($\ell$), CGLS, and LSQR. We omit methods such as BiCG and BiCGStab($\ell$) as they lack theoretical convergence guarantees. Based on this discussion, we conclude that LSQR is the most suitable method for our setting.
Next, we compare the classical choices of the regularization parameter $\mu_k$ in inexact frameworks with our proposed adaptive variant. In particular, we test the choices $\mu_k=\|F(x_k)\|^2$, $\mu_k = \|F(x_k)\|$, $\mu_k = \|\nabla F(x_k)F(x_k)\|$, and the strategy from \cite{ahookhosh2020finding}.
The experiments involve nonlinear systems arising in biochemical reaction networks, as well as monotonic equations and other nonlinear mappings. Here, our method outperforms the others in the biochemical reaction network tests and performs competitively on the other problems.
We then evaluate an image deblurring task, where the objective is to reconstruct a blurred and noisy image. In this setting, we compare our methods with SpaRSA \cite{wright2009sparse} and TwIST \cite{bioucas2007new}. The results show only marginal differences in performance between the methods.
All experiments are run on an AMD Ryzen 7 6800H with 32GB of RAM.

To evaluate algorithmic performance, we take advantage of the Dolan and Mor\'e performance profile~\cite{dolan2002benchmarking} using performance metrics such as $N_i$, $N_f$, or $N_f+3N_i$. 
This profiling method compares the performance of each algorithm relative to the best observed performance on a per-problem basis. Let $\mathcal{S}$ denote the set of algorithms and $\mathcal{P}$ the set of test problems. For each problem $p\in \mathcal{P}$ and algorithm $s\in\mathcal{S}$, the performance ratio is~defined~as
\begin{equation*}
r_{p,s}:=\frac{t_{p,s}}{\min\{t_{p,s}:s\in\mathcal{S}\}},%\label{eq:perf}
\end{equation*}
in which $t_{p,s}$ is the computational result (that is, $N_i$, $N_f$ or $N_f+3N_i$).
If an algorithm fails to solve a problem $r_{p,s}$ is set to a failure threshold $r_\text{failed}$, chosen twice the maximum observed performance ratio. The overall performance profile of the algorithm $s$ for a factor
$\tau\in\mathbb{R}$ is given by
\[
\rho_{s}(\tau):=\frac{1}{n_{p}}\textrm{size}\{p\in\mathcal{P}:r_{p,s}\leq\tau\},
\]
representing the probability that $r_{p,s}$ is within the factor $\tau$
of the best possible ratio. The function $\rho_s(\tau)$ can be interpreted as a cumulative distribution function for the performance ratio. In particular, $\rho_s(1)$ indicates the probability that algorithm $s$ achieves the best performance, while $\displaystyle\lim_{\tau \to r_\mathrm{failed}} \rho_s(\tau)$ gives the fraction of problems successfully solved by $s$. This performance profile is a statistical measure of the efficiency among algorithms.
% In \Cref{fig:rps}, for example, $\tau$ is represented on the $x$-axis, and $P(r_{p,s}\leq\tau:1\leq s\leq n_{s})$ on the $y$-axis.

%%%%%%%%%%%%%%%%%%%%%%%%%%%%%%%%%%%%%%%%%%%%%%%%%%%%%%%%%%%%%%%%%%%%%%%%%%%%%%%%%%%%%%%%%
\subsection{{\bf Krylov subspace schemes for solving the linear subproblem}}

In each iteration, the ILM method computes the search direction by approximately solving the linear system \eqref{eq:LM_direction}, a step that often dominates the overall computational cost, especially in large-scale problems. Hence, the choice of an efficient iterative solver is crucial to the method's performance.
The available iterative solvers can generally be categorized into stationary methods and Krylov-type methods. Stationary methods, such as Chebyshev, Richardson, (weighted) Jacobi, and Gauss-Seidel, are rarely employed as standalone solvers in practice. Instead, they are commonly used as preconditioners or smoothers. In contrast, Krylov-type methods, including CG, BiCG, BiCGStab($\ell$), GMRES($\ell$), CGLS, and LSQR, possess favorable convergence and scalability properties, making them more suitable for the ILM framework.

At the basis of most state-of-the-art iterative solvers for linear systems lies the concept of a Krylov subspace.

\begin{definition}[Krylov subspace]
Given a matrix $A\in\mathbb{R}^{n\times n}$ and vector $v\in\mathbb{R}^n$, the Krylov sequence generated by $A$ and $v$ is given by
$\{A^kv\}_{k\ge 0}=\{v,~ Av,~ A^2v,~ \ldots\}$
and the $k$-dimensional Krylov subspace $\mathcal{K}_k(A,v)$ is defined~as
\[
\mathcal{K}_k(A,v)\coloneqq
\mathrm{span}
\left\{
v, Av, \ldots, A^{k-1}v
\right\}.
\]
\end{definition}
Most classical Krylov methods are based on solving a minimization problem over a $k$-dimensional Krylov-subspace. These methods aim to iteratively approximate the solution to the linear system
\(Ax = b\).

%%%%%%%%%%%%%%%%%%%%%%%%%%%%%%%%%%%%%%%%%%%%%%%%%%%%%%%%%%%%%%%%%%%%%%%%%%%%%%%%%%%%%%%%%
\subsubsection{{\bf The CG scheme}}
Let $A$ be a symmetric positive definite matrix. The Conjugate Gradient (CG) method generates, at each iteration, an approximate solution $x_k$ by solving the following minimization problem
\begin{equation*}
    \|x^*-x_k\|_A = \min_{x\in x_0+\mathcal{K}_k(A,r_0)}\|x^*-x\|_A,
\end{equation*}
where $x^*\in \R^n$ with $Ax^*=b$, \( r_0 = b - Ax_0 \) denotes the initial residual and $\|v\|_A \coloneqq \sqrt{v^TAv}$ is the energy norm.
Thus, the method ensures a monotonic decrease in error, often leading to fast convergence in practice~\cite{liesen2013krylov}.

%%%%%%%%%%%%%%%%%%%%%%%%%%%%%%%%%%%%%%%%%%%%%%%%%%%%%%%%%%%%%%%%%%%%%%%%%%%%%%%%%%%%%%%%%
\subsubsection{{\bf The GMRES scheme}}                                                                            
Let $A$ be an invertible matrix. The Generalized Minimal Residual (GMRES) method computes an approximate solution $x_k$ in iteration $k$ by solving the residual norm over the Krylov subspace minimization problem,
\begin{equation*}
\|Ax_k - b\|_2=\min_{x\in x_0+\mathcal{K}_k(A,r_0)} \|Ax - b\|_2.
\end{equation*}
This approach ensures that the residual's Euclidean norm is reduced at each step, making GMRES particularly suitable for solving general nonsymmetric linear systems \cite{liesen2013krylov}.

%%%%%%%%%%%%%%%%%%%%%%%%%%%%%%%%%%%%%%%%%%%%%%%%%%%%%%%%%%%%%%%%%%%%%%%%%%%%%%%%%%%%%%%%%
\subsubsection{{\bf The LSQR and CGLS schemes}}
Unlike CG and GMRES, the LSQR and Conjugate Gradient for Least Squares (CGLS) methods apply to rectangular systems where $A\in\mathbb{R}^{m\times n}$ with $m\neq n$. Both algorithms are mathematically equivalent to applying CG to normal equations $A^TAx = A^Tb$, yet exhibit better numerical stability, especially when $A$ is ill-conditioned.
These solvers are also well-suited for solving regularized least squares problems of the form:
\begin{align*}
\min_x \|Ax-b\|_2^ 2+\lambda^2 \|x\|^2_2,
\end{align*}
where $\lambda>0$ is a regularization parameter. The optimality conditions of this problem lead to the linear system $(A^TA+\lambda^2 I)x=A^Tb$, which coincides with the structure of the LM subproblem in \eqref{eq:LM_direction}, allowing LSQR and CGLS to be used directly within the LM framework. Moreover, this regularized problem can be equivalently expressed as
\begin{equation*}
\min_x
\left\|
\begin{bmatrix}
A\\
\lambda I
\end{bmatrix}
x
-
\begin{bmatrix}
b\\
0
\end{bmatrix}
\right\|_2,
\end{equation*}
which can be solved efficiently using LSQR-type methods optimized for numerical robustness \cite{paige1982lsqr,paige1982algorithm}.

%%%%%%%%%%%%%%%%%%%%%%%%%%%%%%%%%%%%%%%%%%%%%%%%%%%%%%%%%%%%%%%%%%%%%%%%%%%%%%%%%%%%%%%%%
\subsubsection{{\bf Comments on Krylov subspace schemes for solving \eqref{eq:LM_direction}}}
% While our experiments do not make use of this fact yet, 
Krylov-subspace methods are {\it matrix-free}, relying only on products with $Ax$ and/or $A^Tx$. This feature not only simplifies implementation ($\nabla F$ does not need to be formed explicitly), but also facilitates efficient parallelization.
Among the considered solvers, LSQR offers the most favorable trade-offs for solving the LM subproblem \eqref{eq:LM_direction}. In contrast, CG and GMRES require square systems, possibly leading to the explicit computation of $A^\top A$, which can be dense and ill-conditioned even when $A$ is sparse. Their convergence typically depends on the square of the condition number of $A$ and may behave similarly to LSQR or CGLS but with reduced robustness. Furthermore, GMRES, in particular, is memory-intensive due to its full orthogonalization process, limiting its scalability. Although CGLS is mathematically equivalent to LSQR, empirical observations suggest that LSQR performs better, especially in the presence of damping. Therefore, all numerical experiments in this study employ LSQR. For further details, see \cite{paige1982lsqr,paige1982algorithm,liesen2013krylov}.

%%%%%%%%%%%%%%%%%%%%%%%%%%%%%%%%%%%%%%%%%%%%%%%%%%%%%%%%%%%%%%%%%%%%%%%%%%%%%%%%%%%%%%%%%%
\subsection{{\bf Application to biochemical reaction networks}}\label{sec:biochem}
%todo: Ni + 3Nf
We now evaluate the proposed methods on a set of 21 biochemical models. These problems are particularly challenging due to their exponential structure, often leading to erratic convergence behavior. For consistency, the same parameter choices are used across all problems; however, problem-specific tuning could further enhance performance. In particular, we observed that the decay rate of $\mu_k$ is very aggressive in some cases, potentially impacting convergence. The stopping criterion is set to $\|F(x_k)\| < 10^{-6}$ or a maximum of 100{,}000 iterations. We report both the termination residual $\|F(x_{N_i})\|$ and the composite performance metric $N_f + 3N_i$, where $N_f$ is the number of evaluations of $F$ and $N_i$ is the number of iterations. If the stopping criterion is not met within the iteration budget, the run is labeled as ``failed''. As shown in \Cref{tab:my_label}, the ILLM method successfully converges on all problems except `iJP815', and exhibits small residuals across the board.

ILMQR and ILLM showed nearly identical performance in this setting. For instance, in `Ecoli\_core', both methods converged in 148 iterations; in `iAF692', ILMQR required 677 iterations versus 944 for ILLM. In both cases, the inner loop of ILMQR was triggered only once. Since the differences were negligible, we excluded ILMQR from further comparison and focused on ILLM versus the following established algorithms:
\begin{enumerate}
    \item[$\bullet$] ILMYF:\label{item:yf} Levenberg-Marquardt line search method with $\mu_k=\|F(x_k)\|^2$ from \cite{yamashita2001rate};
    \item[$\bullet$] ILMFY:Levenberg-Marquardt line search method with $\mu_k=\|F(x_k)\|$ from \cite{fan2005quadratic};
    \item[$\bullet$] ILevMar: Levenberg-Marquardt trust-region method with $\mu_k=\|\nabla F(x_k)F(x_k)\|$ from \cite{ipsen2011rank};
    \item[$\bullet$] ILLMAFV: Levenberg-Marquardt method from \cite{ahookhosh2020finding} (direct version of \Cref{ALGORITHM-2:illm}) with 
    \[\small \mu_k=\begin{cases}
        \max\left\{0.95\|F(x_k)\|^\eta + 0.05\|\nabla F(x_k)F(x_k)\|^\eta, 10^{-8}\right\} &~~~ \mathrm{if}~ 0.95^k>0.01,\\[2mm]
        \max\bigg\{\max\big\{0.95^k,10^{-10}\big\}\|F(x_k)\|^\eta + \left(1-\max\big\{0.95^k,10^{-10}\big\}\right)\|\nabla F(x_k)F(x_k)\|^\eta, 10^{-8}\bigg\} &~~~ \text{otherwise.}
    \end{cases}\]
\end{enumerate}
All methods share the same auxiliary parameters to ensure a fair comparison. Given that Jacobians are relatively dense in these problems, direct solvers were found to be more effective than LSQR in terms of both speed and accuracy, and are used in all methods except ILLM. The parameter choices for ILLM are
$\xi_k=0.5(0.9)^k,$ $\omega_k=0.5(0.9)^k,$ and $\eta=1.3$.
As seen in \Cref{fig:rps} and \Cref{tab:my_label}, ILLM outperforms the other methods in all performance metrics. Notably, ILMFY emerges as the second-best performing method in most instances.

\begin{figure}
    \centering
    
    \subfloat[Metric: $N_i$]{\includegraphics[width=.3\textwidth]{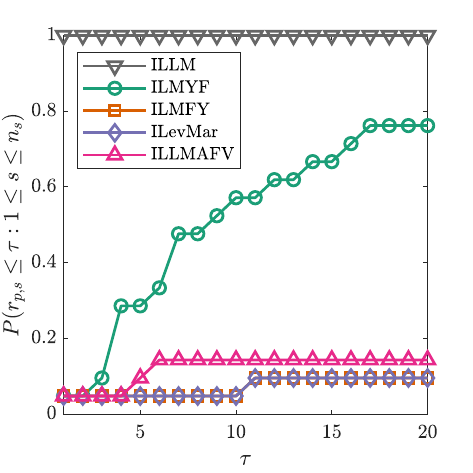}}
    \hspace{0.1cm}
    \subfloat[Metric: $N_f$]{\includegraphics[width=.3\textwidth]{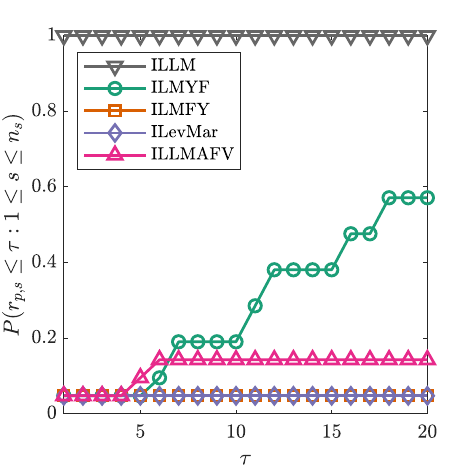}}
    \hspace{0.1cm}
    \subfloat[Metric: $N_f+3N_i$]{\includegraphics[width=.3\textwidth]{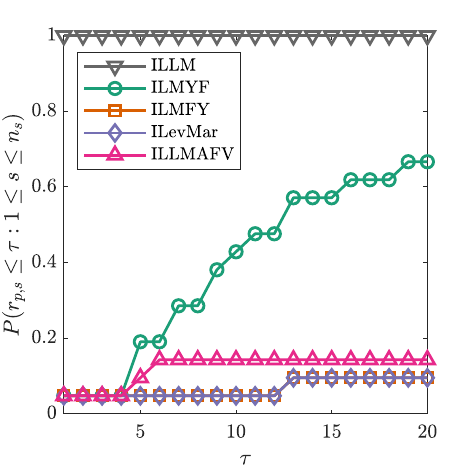}}
    \caption{Comparison between performance for ILLM, ILMYF, ILMFY, ILevMar, and ILLMAFV on 21 biochemical problems and three different metrics.}
    \label{fig:rps}
\end{figure}

%%%%%%%%%%%%%%%%%%%%%%%%%%%%%%%%%%%%%%%%%%%%%%%%%%%%%%%%%%%%%%%%%%%%%%%%%%%%%%%%%%%%%%%%%%
\subsection{{\bf Finding zeros of large-scale monotone mappings}}\label{sec:synth}
We now consider the problem of finding a zero of a monotone mapping $F:\mathbb{R}^n\to\mathbb{R}^n$ satisfying \((F(x)-F(y))^T(x-y)\geq 0,\) for all \(x,y\in\mathbb{R}^n.\)
Such mappings frequently arise as subproblems in generalized proximal algorithms and in the study of variational inequality problems \cite{bouaricha1998tensor,ortega2000iterative,zhao2001monotonicity}, making them an interesting class of problems in nonlinear analysis. The benchmark problems used in this section are listed in \Cref{tab:mappings_monotone}. Let us emphasize that all problems have extremely sparse Jacobians (tridiagonal or block-tridiagonal with tridiagonal blocks), which makes them easily scalable and well-suited for iterative linear solvers such as LSQR.

We compare the same set of algorithms as in \Cref{sec:biochem}. As shown in \Cref{fig:monotone}, ILLM achieves superlinear convergence in most cases. Although ILevMar occasionally performs slightly better, it fails to converge on several problems for which ILLM succeeds. This robustness is reflected in the performance profiles in \Cref{fig:perf_monotone}, where both ILLM and ILevMar consistently outperform the other methods. \Cref{tab:iters_monotone}, reporting $N_f + 3N_i$, shows that ILLM is the most robust, succeeding on all but one problem where the other methods frequently fail.

\begin{figure}[!htb]
    \centering
    \subfloat[Metric: $N_i$]{
    \includegraphics[width=.3\textwidth]{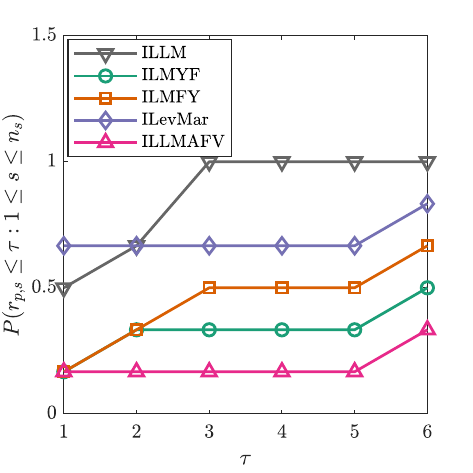}}
    \hspace{0.1cm}
    \subfloat[Metric: $N_f$]{
    \includegraphics[width=.3\textwidth]{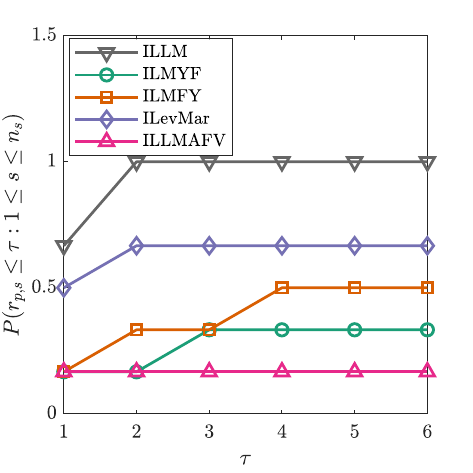}}
    \hspace{0.1cm}
    \subfloat[Metric: $N_f+3N_i$]{
    \includegraphics[width=.3\textwidth]{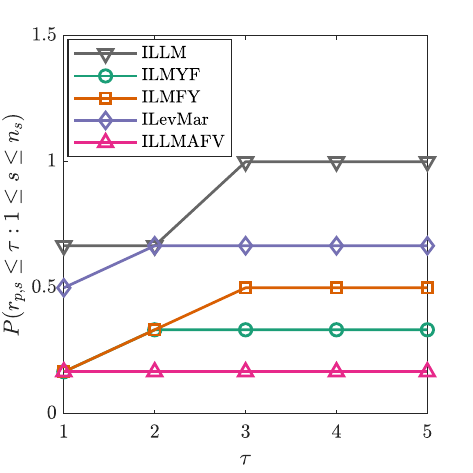}}
    \caption{Comparison between performance for ILLM, ILMYF, ILMFY, ILevMar, and ILLMAFV on six monotone mappings and three different metrics.}
    \label{fig:perf_monotone}
\end{figure}

\begin{table}[!htb]
    \centering
    \begin{tabular}{|c|c|c|c|c|c|}
    \hline
     Problem label  & ILLM           &ILMYF          &ILMFY              &ILevMar            & ILLMAFV   \\\hline
(a)     &35770          &failed     &failed    &failed     &failed\\ 
(b)     &  275          &failed     & 5326     &137        &21047\\ 
(c)     & 8790          &14236      &26302     &failed     &failed\\ 
(d)     &  110          &18219      &  168     &119        &failed\\ 
(e)     &failed         &failed     &failed    &failed     &failed\\ 
(f)     &310            &failed     &10390     &125        &37127\\ 
\hline
\end{tabular}
    \caption{$N_f+3N_i$ for every monotone mapping and algorithm.}
    \label{tab:iters_monotone}
\end{table}

\begin{figure}[!htb]
    \centering
    \centering
\subfloat[Convergence of function (a)]{\includegraphics[width=.3\textwidth]{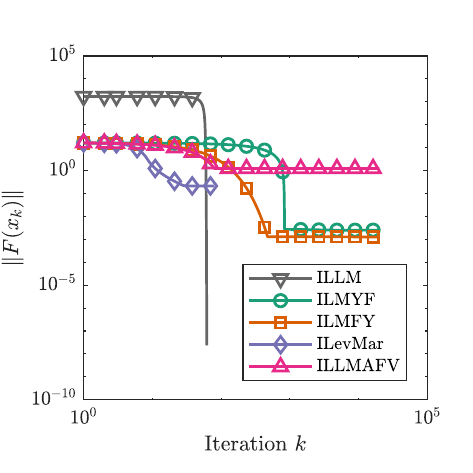}}%
\hspace{0.1cm}
\subfloat[Convergence of function (b)]{\includegraphics[width=.3\textwidth]{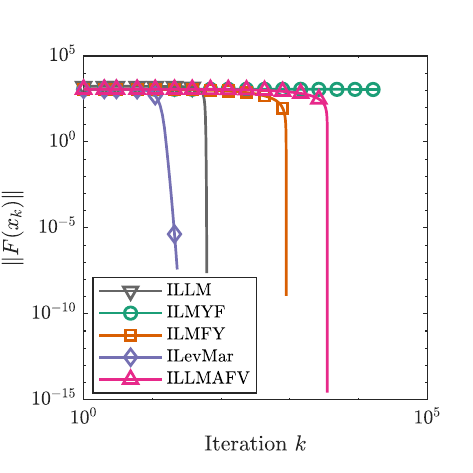}}%
\hspace{0.1cm}
\subfloat[Convergence of function (c)]{\includegraphics[width=.3\textwidth]{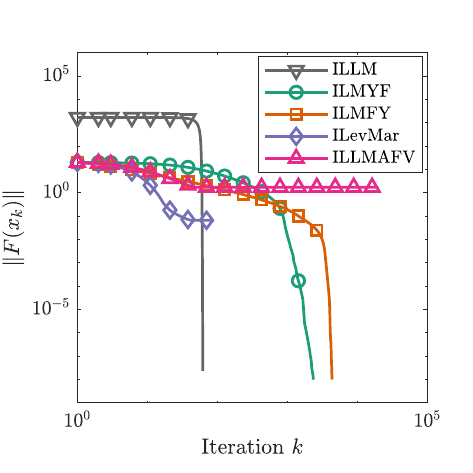}}%
\hspace{0.1cm}
\subfloat[Convergence of function (d)]{\includegraphics[width=.3\textwidth]{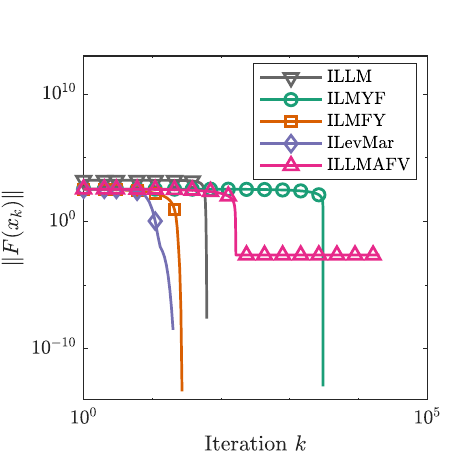}}%
\hspace{0.1cm}
\subfloat[Convergence of function (e)]{\includegraphics[width=.3\textwidth]{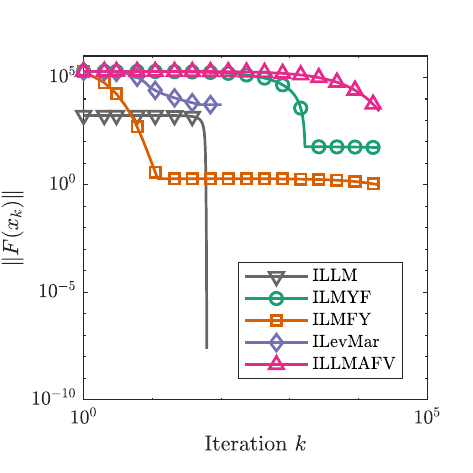}}%
\hspace{0.1cm}
\subfloat[Convergence of function (f)]{\includegraphics[width=.3\textwidth]{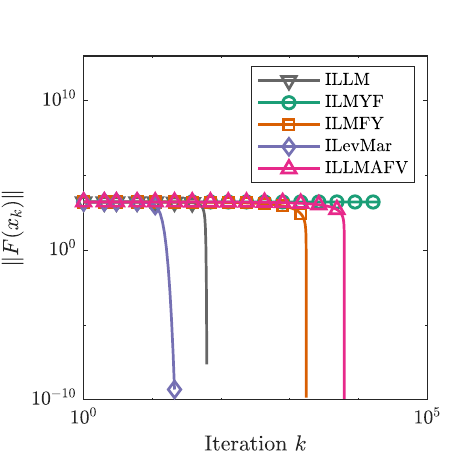}}%
\hspace{0.1cm}
    \caption{Convergence of $\|F(x_k)\|$ for six monotone mappings and five algorithms.}
    \label{fig:monotone}
\end{figure}

%%%%%%%%%%%%%%%%%%%%%%%%%%%%%%%%%%%%%%%%%%%%%%%%%%%%%%%%%%%%%%%%%%%%%%%%%%%%%%%%%%%%%%%%%%
\subsection{{\bf Finding zeros of nonmonotone mappings}}\label{sec:nonMap}
Our next set of experiments concerns a collection of mappings $F:\mathbb{R}^n\to \mathbb{R}^m$ characterized by an extremely sparse Jacobian.
The mappings are square (i.e., $n=m$) or overdetermined (i.e., $n>m$), and are not necessarily monotone. A detailed overview is provided in \Cref{tab:mappings}. We use the same parameters as in the monotone case.

As shown in \Cref{fig:synthetics}, our method converges superlinearly and successfully solves all problems. Consequently, \Cref{fig:perf_synth} illustrates that our approach consistently achieves the best performance. It is worth noting that ILevMar produced NaNs in two instances, for which we assigned the maximum iteration count of $10^3$. \Cref{tab:iters_mappings} reinforces this observation: ILLM is the only method that successfully solves all but one problem, while ILMFY performs competitively in a few cases-highlighting that our approach remains robust without compromising efficiency.

%\vspace{-3mm}
%%%%%%%%%%%%%%%%%
\begin{figure}[!htb]
\centering
\subfloat[Convergence of function (i)]{\includegraphics[width=.3\textwidth]{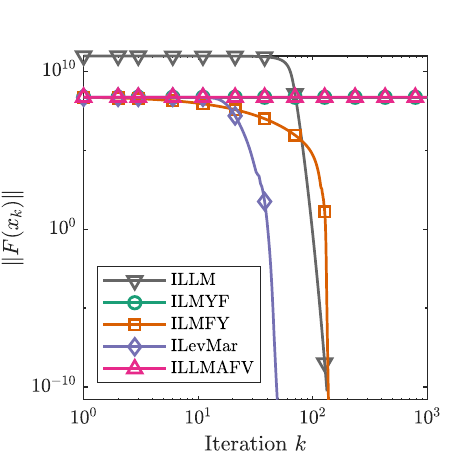}}%
\hspace{0.1cm}
\subfloat[Convergence of function (ii)]{\includegraphics[width=.3\textwidth]{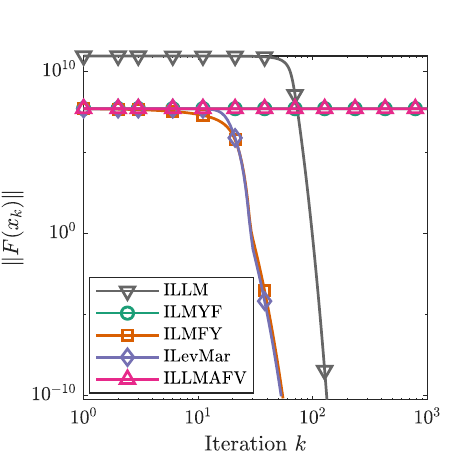}}%
\hspace{0.1cm}
\subfloat[Convergence of function (iii)]{\includegraphics[width=.3\textwidth]{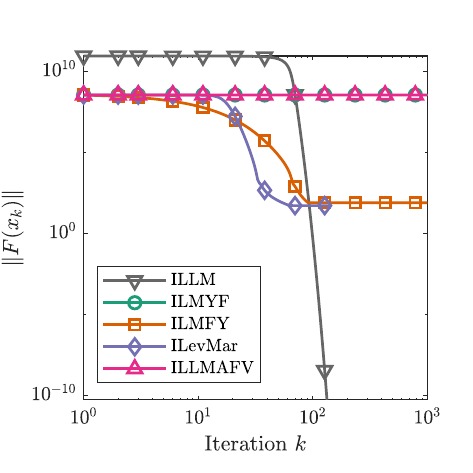}}%
\hspace{0.1cm}
\subfloat[Convergence of function (iv)]{\includegraphics[width=.3\textwidth]{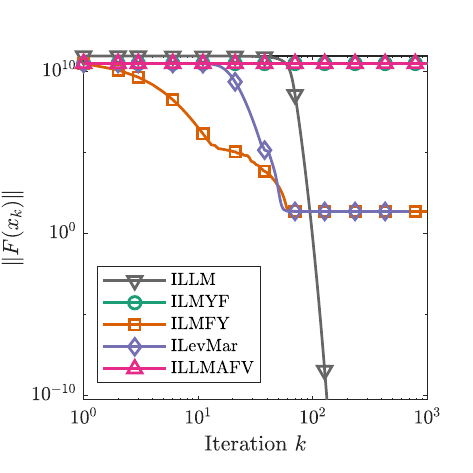}}%
\hspace{0.1cm}
\subfloat[Convergence of function (v)]{\includegraphics[width=.3\textwidth]{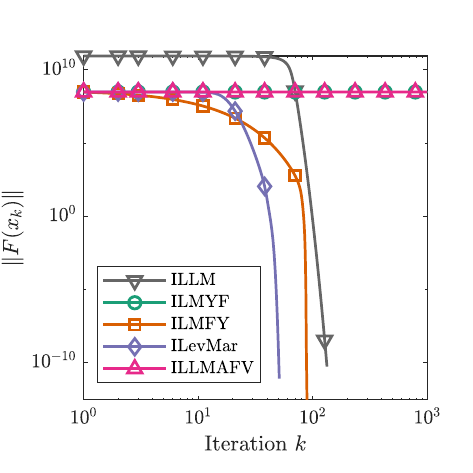}}%
\hspace{0.1cm}
\subfloat[Convergence of function (vi)]{\includegraphics[width=.3\textwidth]{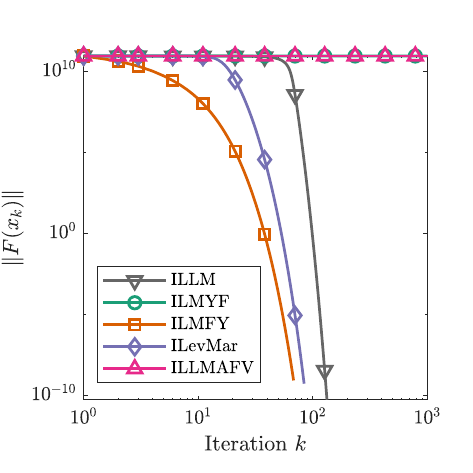}}%
\hspace{0.1cm}
\caption{Convergence of $\|F(x_k)\|$ for six nonlinear mappings and five algorithms.}
    \label{fig:synthetics}
\end{figure}

\begin{figure}[!htb]
    \centering
    \subfloat[Metric: $N_i$]{\includegraphics[width=.3\textwidth]{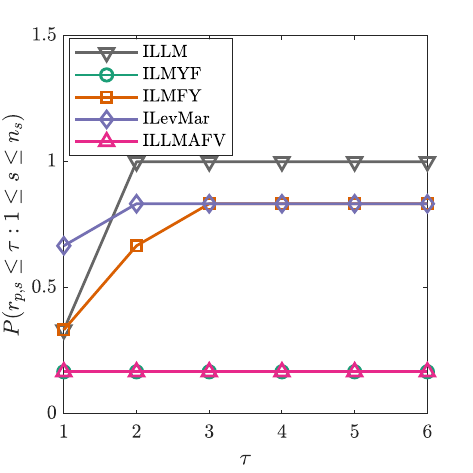}}
    \hspace{0.1cm}
    \subfloat[Metric: $N_f$]{\includegraphics[width=.3\textwidth]{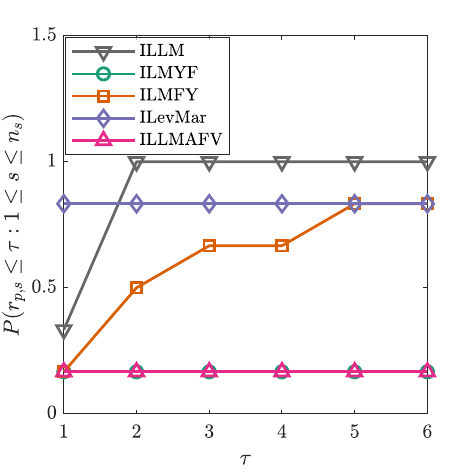}}
    \hspace{0.1cm}
    \subfloat[Metric: $N_f+3N_i$]{\includegraphics[width=.3\textwidth]{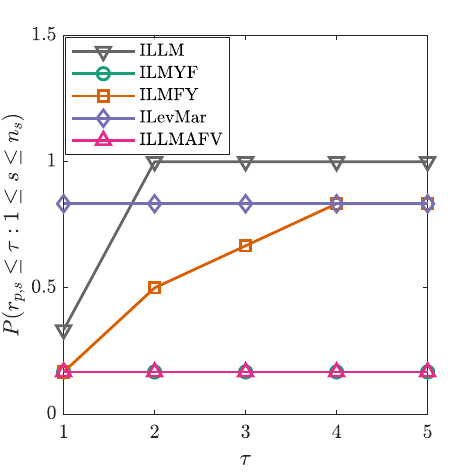}}
    \vspace{-2mm}
    \caption{Comparison among ILLM, ILMYF, ILMFY, ILevMar, and ILLMAFV on three different metrics.}
    \label{fig:perf_synth}
\end{figure}

\begin{table}[h]
    \centering
    \begin{tabular}{|c|c|c|c|c|c|}
    \hline
         Problem label&ILLM &ILMYF      &ILMFY      &ILevMar    & ILLMAFV   \\\hline
         (i)    & 470       &failed     & 832       & 245       &failed        \\
         (ii)   & 520       &failed     & 364       & 265       &failed        \\
         (iii)  & 635       &failed     &failed     &failed     &failed        \\
         (iv)   &failed     &failed     &failed     &failed     &failed        \\
         (v)    & 445       &failed     & 542       & 255       &failed        \\
         (vi)   & 665       &failed     & 446       & 420       &failed        \\
    \hline
    \end{tabular}
    \
    \caption{$N_f+3N_i$ for every nonlinear mapping and algorithm.}
    \label{tab:iters_mappings}
\end{table}

%%%%%%%%%%%%%%%%%%%%%%%%%%%%%%%%%%%%%%%%%%%%%%%%%%%%%%%%%%%%%%%%%%%%%%%%%%%%%%%%%%%%%%%
\subsection{{\bf Application to image deblurring}}

We conclude with image deblurring experiments on $256\times 256$ grayscale images corrupted by 2\% Gaussian noise, comparing our method against SpaRSA and TwIST methods \cite{wright2009sparse,bioucas2007new}. 
In the blurring process, each pixel in an image $X\in\mathbb{R}^{n\times m}$ is replaced by a weighted average of neighbourhood pixels, yielding a blurred image $B\in\mathbb{R}^{n\times m}$. Vectorizing both images to obtain $x, b\in\mathbb{R}^{nm}$, this process become a linear system $Ax=b$
where $A\in\mathbb{R}^{nm\times nm}$ is a sparse, ill-conditioned blurring matrix derived from a point spread function (PSF) \cite{hansen2006deblurring}.

Given the ill-posed nature of this problem, solving the inverse problem directly is unstable. Instead, we solve the regularized normal equations:
\begin{equation*}
    A^TAx - A^Tb + \gamma x = 0,
\end{equation*}
with $\gamma$ set to the noise level. In our experiments, we generate blurred data using IRTools \cite{gazzola2019ir}, applying a mild $15^\circ$ rotation blur matrix $A\in\mathbb{R}^{256^2\times 256^2}$ to a known image, vectorized into $x^*\in\mathbb{R}^{256^2}$, and adding Gaussian noise as
\begin{equation*}\label{eq:imageDeb}
    b = Ax^* + \mathcal{N}(0,0.02),
\end{equation*}
where $Ax^*\in\mathbb{R}^{256^2}$ is the blurred (vectorized) image and $\mathcal{N}(0,0.02)$ is a vector with entries drawn from the normal distribution with mean 0 and standard deviation 0.02 \cite{hansen2006deblurring,hansen2010discrete}.
To generate the images and blurring matrix in MATLAB, we utilize the IRTools package. We use default settings for SpaRSA and TwIST.

We evaluate performance using Root Mean Squared Error (RMSE) and Peak Signal-to-Noise Ratio (PSNR) given by
\begin{equation*}
    {\rm RMSE}:=\sqrt{\frac{1}{mn}\sum_{i=1}^{mn}\left(x^*_i - \tilde{x}_i\right)^2},\quad\quad
    {\rm PSNR}:=10\log_{10}\left(\frac{M^2}{\text{RMSE}^2} \right).
\end{equation*}
where $\tilde{x}$ is a recovered
image and $M$ denotes the maximum pixel intensity (in our case $M=1$).

\Cref{tab:snr} summarizes results on seven test cases. While performance differences are subtle, TwIST excels on images with large dark regions and low smoothness, whereas our method is better on smoother textures. Visual comparisons in \Cref{fig:deblur_imgs} show clearer structural recovery, especially in the `\texttt{circuit}' image, by our method, with better-defined traces than those recovered by SpaRSA or TwIST. Despite noise, both objective values and gradients are small at the termination.

%\begin{figure}[!htb] 
%\centering
%\subfloat{\includegraphics[width=.3\textwidth]{Figs/ilmar_synthetic/performanceNi3.pdf}}
%\hspace{0.1cm}
%\subfloat[Metric: $N_f$]{\includegraphics[width=.3\textwidth]{Figs/ilmar_synthetic/performanceNf3.pdf}}
%\hspace{0.1cm}
%\subfloat[Metric: $N_f+3N_i$]{\includegraphics[width=.3\textwidth]{Figs/ilmar_synthetic/performanceNiNf4.pdf}}
%\vspace{-2mm}
%\caption{Comparison among ILLM, ILMYF, ILMFY, ILevMar, and ILLMAFV on three different metrics.}
%\label{fig:deblur_imgs}
%\end{figure}

%	\newcommand*\mytablecontents{}
%\foreach \img in {hst,satellite,pattern2,smooth,ppower,cameraman,circuit}{
 % \foreach \type in {exact, blur, ilmar, sparsa, twist}{
  %  \xappto\mytablecontents{\includegraphics[width=.15\textwidth]{\img_\type_256.png} &}
 % }
%  \gappto\mytablecontents{\\}
%}

%\begin{figure}
%    \centering
%    \begin{tabular}{ccccc}
%        Exact & Blur & ILMQR & SpaRSA & TwIST \\
%        \mytablecontents
%    \end{tabular}
%    \caption{Comparison between ILMQR, SpaRSA, and TwIST for image deblurring with 2\% Gaussian noise. Upon visual inspection, SpaRSA performs worse, as confirmed by the RMSE.}
%    \label{fig:deblur_imgs}
%\end{figure}

\begin{figure}[htbp]
    \centering
    \begin{tabular}{ccccc}
        Exact & Blur & ILMQR & SpaRSA & TwIST \\
        \includegraphics[width=0.15\textwidth]{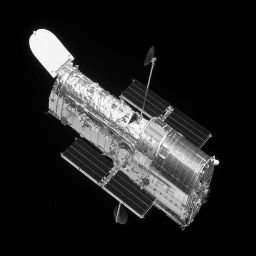} &
        \includegraphics[width=0.15\textwidth]{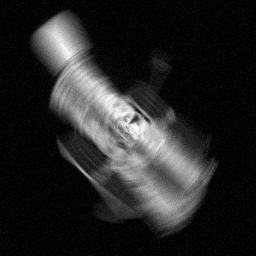} &
        \includegraphics[width=0.15\textwidth]{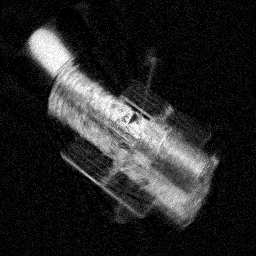} &
        \includegraphics[width=0.15\textwidth]{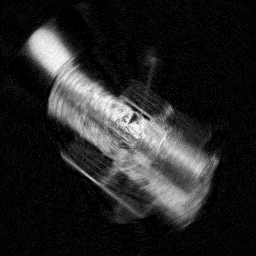} &
        \includegraphics[width=0.15\textwidth]{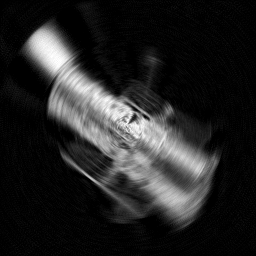} \\
        
        \includegraphics[width=0.15\textwidth]{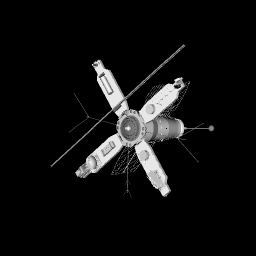} &
        \includegraphics[width=0.15\textwidth]{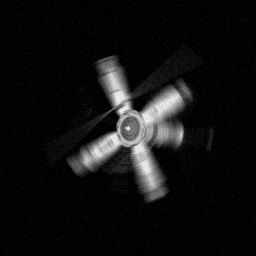} &
        \includegraphics[width=0.15\textwidth]{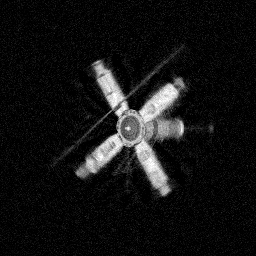} &
        \includegraphics[width=0.15\textwidth]{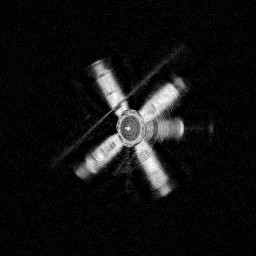} &
        \includegraphics[width=0.15\textwidth]{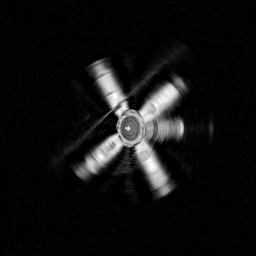} \\
        
        \includegraphics[width=0.15\textwidth]{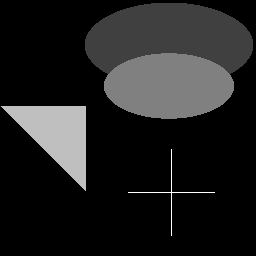} &
        \includegraphics[width=0.15\textwidth]{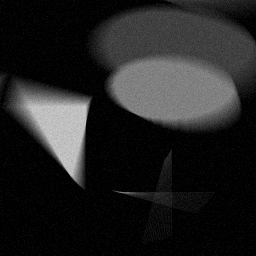} &
        \includegraphics[width=0.15\textwidth]{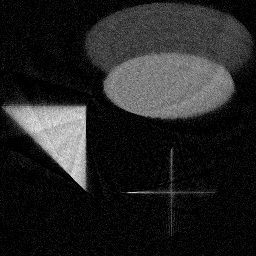} &
        \includegraphics[width=0.15\textwidth]{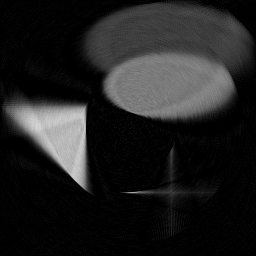} &
        \includegraphics[width=0.15\textwidth]{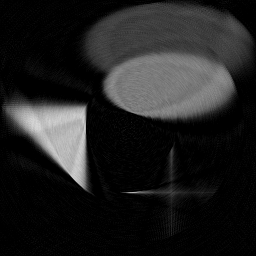} \\
        
        \includegraphics[width=0.15\textwidth]{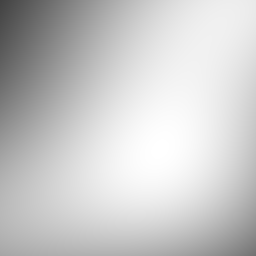} &
        \includegraphics[width=0.15\textwidth]{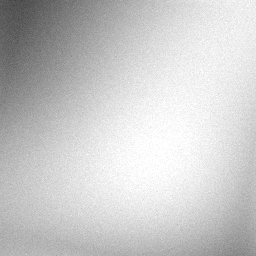} &
        \includegraphics[width=0.15\textwidth]{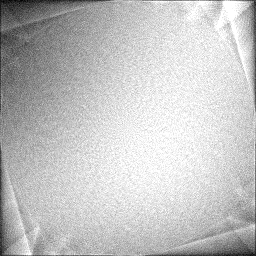} &
        \includegraphics[width=0.15\textwidth]{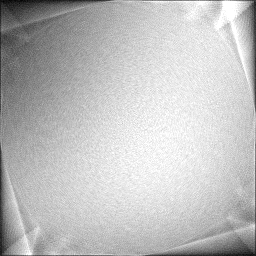} &
        \includegraphics[width=0.15\textwidth]{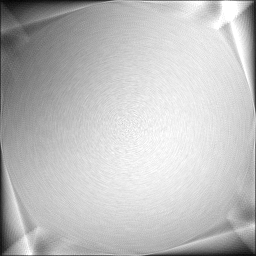} \\
        
        \includegraphics[width=0.15\textwidth]{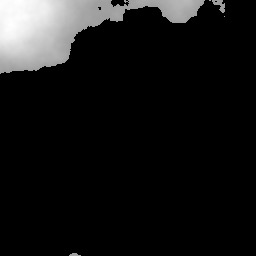} &
        \includegraphics[width=0.15\textwidth]{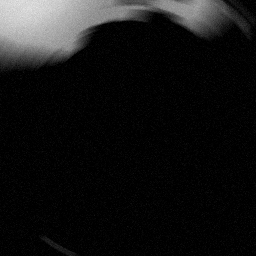} &
        \includegraphics[width=0.15\textwidth]{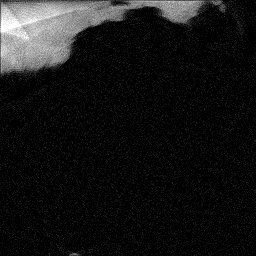} &
        \includegraphics[width=0.15\textwidth]{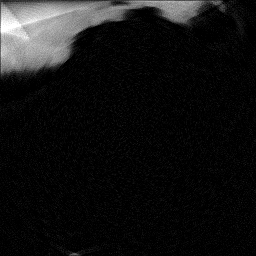} &
        \includegraphics[width=0.15\textwidth]{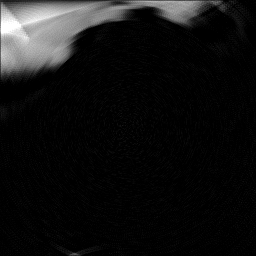} \\
        
        \includegraphics[width=0.15\textwidth]{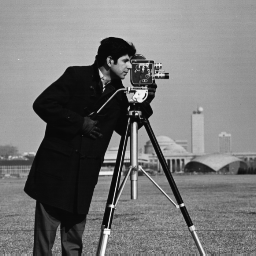} &
        \includegraphics[width=0.15\textwidth]{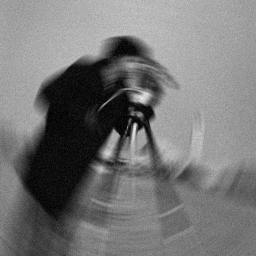} &
        \includegraphics[width=0.15\textwidth]{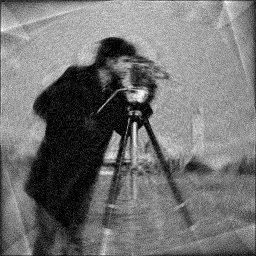} &
        \includegraphics[width=0.15\textwidth]{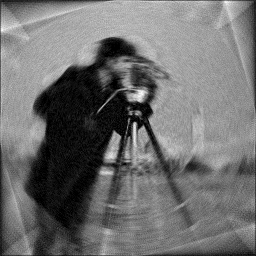} &
        \includegraphics[width=0.15\textwidth]{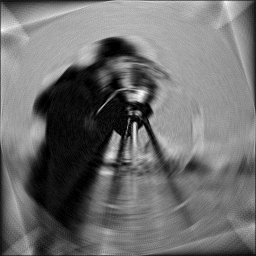} \\
        
        \includegraphics[width=0.15\textwidth]{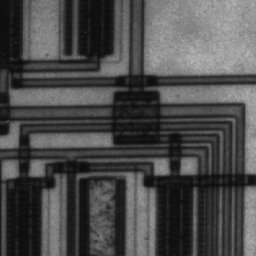} &
        \includegraphics[width=0.15\textwidth]{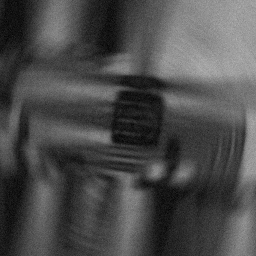} &
        \includegraphics[width=0.15\textwidth]{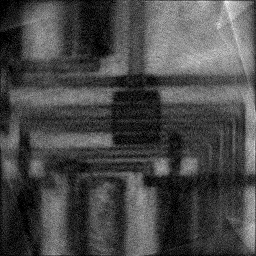} &
        \includegraphics[width=0.15\textwidth]{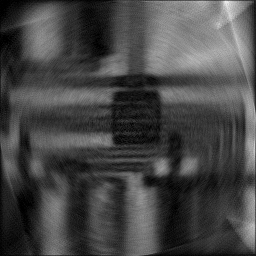} &
        \includegraphics[width=0.15\textwidth]{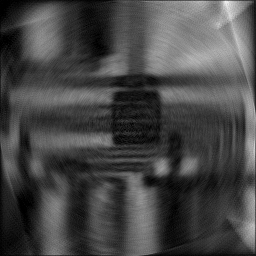} \\
    \end{tabular}
    \caption{Comparison between ILMQR, SpaRSA, and TwIST for image deblurring with 2\% Gaussian noise. Upon visual inspection, SpaRSA performs worse, as confirmed by the RMSE.}
    \label{fig:deblur_imgs}
\end{figure}

\begin{table}[h]
\begin{center}
    \begin{tabular}{|l|ccc|ccc|}
    \hline
        &       &   RMSE    &       
        &       & PSNR      &       
        \\ \hline
    Image label            
        & ILMQR  & SpaRSA    & TwIST 
        & ILMQR  & SpaRSA    & TwIST 
        \\ \hline
    \texttt{hst}    
        & 0.07 & \textbf{0.07}  & 0.07 
        & {22.8 dB} & \textbf{23.2 dB} & {23.0 dB} 
        % & 6.57 dB & 0.50 dB & \textbf{7.43 dB} 
        \\
    \texttt{satellite} 
        & 0.07 & \textbf{0.06}  & {0.06} 
        & 23.4 dB & \textbf{24.1 dB} & {24.0 dB} 
        % & 5.61 dB & 3.61 dB & \textbf{7.62 dB} 
        \\
    \texttt{pattern2} 
        & 0.07 & 0.07 & \textbf{0.07} 
        & 22.7 dB & 23.0 dB & \textbf{23.1 dB} 
        % & 4.67 dB & -2.17 dB & \textbf{5.71 dB}
        \\
    \texttt{smooth} 
        &\textbf{0.09} & 0.09 & 0.10 
        & \textbf{21.4 dB} & 20.9 db & 20.2 dB 
        % & \textbf{-9.07 dB} & -26.0 dB & -12.0 dB 
        \\
    \texttt{ppower} 
        & \textbf{0.09} & 0.09 & 0.09 
        & \textbf{21.3 dB} & 21.3 dB & 18.7 dB 
        % & \textbf{2.08 dB} & -6.70 dB & 0.70 dB 
        \\
    \texttt{cameraman} 
        & \textbf{0.10} & 0.11 & 0.11 
        & \textbf{19.9 dB} & 19.4 dB & 18.8 dB 
        % & \textbf{2.80 dB} & -10.4 dB & 0.50 dB
        \\
    \texttt{circuit} 
        & \textbf{0.08} & 0.08 & 0.08 
        & \textbf{22.2 dB} & 21.9 dB & 21.7 dB \\
        % & \textbf{4.48 dB} & -7.81 dB & 2.60 dB
    \hline
 \end{tabular}
\end{center}
 \caption{Comparison in RMSE and PSNR between ILMQR, SpaRSA, and TwIST for all seven deblurring problems.}
 \label{tab:snr}
\end{table}

%%%%%%%%%%%%%%%%%%%%%%%%%%%%%%%%%%%%%%%%%%%%%%%%%%%%%%%%%%%%%%%%%%%%%%%%%%%%%%%%%%%%%%%%%
%%%%%%%%%%%%%%%%%%%%%%%%%%%%%%%%%%%%%%%%%%%%%%%%%%%%%%%%%%%%%%%%%%%%%%%%%%%%%%%%%%%%%%%%%
%\vspace{-7mm}
\section{Conclusions}\label{sec:conclusion}
We proposed two inexact Levenberg–Marquardt algorithms -- ILLM and ILMQR -- for finding zeros of continuously differentiable mappings. The regularization parameter $\mu_k$ is defined adaptively, offering flexibility to tailor it for specific problem structures. For the ILLM method,  under H\"{o}lder metric subregularity and local H\"{o}lder gradient continuity, we established local (superlinear) convergence properties, with particular attention to the tolerance required for the iterative linear solver. To ensure global convergence, we introduced the ILMQR algorithm, which incorporates a nonmonotone quadratic regularization strategy and a modified update for $\mu_k$. We proved the well-definedness of the method, established global convergence, and provided a complexity analysis under Lipschitz gradient continuity. 
Since the proposed methods are inexact -- relying on iterative solvers to approximately solve the linear systems -- we provided a brief overview of Krylov-type methods, which remain the state of the art for solving sparse linear systems. Among the commonly used methods, LSQR emerged as the most suitable for our setting due to its favorable conditioning and reliance solely on matrix-vector products with $\nabla F$ and its transpose. We evaluated the proposed approach on a diverse set of problems to demonstrate its effectiveness and practical performance.

\section*{Appendix}
This appendix provides supplementary tables referenced in the main text. \Cref{tab:my_label} lists all models used in the biochemical reaction network experiments, together with the corresponding results for the algorithms under comparison. \Cref{tab:mappings_monotone,tab:mappings} detail the monotone and nonmonotone mappings considered in \Cref{sec:synth,sec:nonMap}, respectively.
\begin{landscape}
\renewcommand{\arraystretch}{1.4}
\begin{table}
    \centering
\begin{tabular}{|l|cc|cc|cc|cc|cc|}
\hline
                    & ILLM     &   &ILMYF & &ILMFY & &ILevMar & &ILLMAFV & \\\hline
     Problem label           & $N_f+3N_i$& $\|F(x_{N_i})\|$& $N_f+3N_i$& $\|F(x_{N_i})\|$& $N_f+3N_i$& $\|F(x_{N_i})\|$& $N_f+3N_i$& $\|F(x_{N_i})\|$& $N_f+3N_i$& $\|F(x_{N_i})\|$  \\ \hline
     `Ecoli\_core'  &740        & 9.44e-07       &  3281         & 7.08e-09  &failed &8.53e-05 &failed &2.47e-04 &4180      &9.99e-07 \\
     `iAF692'       &4720       & 9.44e-07       &392112         &9.99e-07   &failed &3.41e-04 &failed &4.19e-04 &failed    &1.33e-06 \\
     `iAF1260'      &2085       & 3.06e{-05}     & 68970         &9.99e-07   &failed &1.13e-03 &failed &7.96e-04 &failed    &3.19e-06\\
     `iBsu1103'     &2105       & 2.16e-05       & 38362         &9.99e-07   &failed &4.45e-04 &failed &4.80e-04 &failed    &1.57e-06\\
     `iCB925'       &3995       & 6.49e{-05}     & 39174         &9.99e-07   &failed &7.01e-04 &failed &7.11e-04 &failed    &1.05e-06\\
     `iIT341'       &4540       & 9.98e{-06}     & 19923         &9.99e-07   &failed &4.09e-04 &failed &4.71e-04 &failed    &1.02e-06\\
     `iJN678'       &3490       & 2.16e-05       & 28206         &9.99e-07   &failed &4.42e-04 &failed &4.75e-04 &failed    &1.58e-06\\
     `iJN746'       &1505       & 9.93e{-06}     & 32064         &9.99e-07   &failed &5.56e-04 &failed &5.29e-04 &failed    &1.83e-06\\
     `iJO1366'      &5935       &9.99e-07        & 48949         &9.99e-07   &failed &1.37e-03 &failed &8.49e-04 &failed    &3.15e-06\\ 
     `iJP815'       &failed     &0.0255          &failed         &NaN        &failed &9.90e-04 &failed &NaN      &failed    &NaN\\ 
     `iJR904'       & 1305      &9.24e-07        & 37027         &9.99e-07   &failed &1.08e-03 &failed &7.64e-04 &failed    &2.54e-06\\ 
     `iMB745'       &95675      &9.99e-07        &602843         &9.95e-07   &failed &1.59e-04 &failed &2.87e-04 &392890    &3.17e-06\\ 
     `iNJ661'       & 3465      &9.99e-07        &127897         &9.99e-07   &failed &4.78e-04 &failed &4.90e-04 &failed    &2.07e-06\\ 
     `iRsp1095'     & 7590      &4.56e-05        & 76553         &9.99e-07   &failed &5.73e-04 &failed &5.11e-04 &failed    &1.65e-06\\ 
     `iSB619'       & 1240      &8.91e-07        & 26536         &9.99e-07   &failed &7.33e-04 &failed &6.87e-04 &failed    &1.35e-06\\ 
     `iTH366'       & 7180      &9.99e-07        & 46513         &9.99e-07   &failed &1.56e-04 &failed &2.70e-04 &306685    &9.99e-07\\ 
     `iTZ479\_v2'   & 1760      &9.72e-07        & 21195         &9.99e-07   &failed &2.43e-04 &failed &3.53e-04 &failed    &1.05e-06\\ 
     `iYL1228'      & 9180      &5.44e-06        &failed         &2.04e-06   &failed &1.62e-03 &failed &9.23e-04 &failed    &3.15e-06\\ 
     `L\_lactis\_MG1363'& 3655  &9.98e-07        &14638          &9.98e-07   &failed &7.45e-04 &failed &6.68e-04 &failed    &1.70e-06\\ 
     `Sc\_thermophilis\_rBioNet'& 2695 &9.89e-07 &42165          &9.99e-07   &failed &6.12e-03 &failed &9.38e-04 &failed    &2.57e-06\\ 
     `T\_Maritima'  & 1430      &9.63e-07        &18028          &9.99e-07   &failed &2.72e-04 &failed &3.68e-04 &failed    &1.08e-06 \\
     \hline

\end{tabular}\\
\vspace{1em}
    \caption{Performance $(N_f+3N_i)$ and final objective value for every solver and every biochemical reaction network problem.}
    \label{tab:my_label}
\end{table}
\end{landscape}

\begin{table}[h]
    \centering
      \renewcommand{\arraystretch}{1.5}
    \begin{tabular}{|c|c|c|l|c|}
    \hline
    Problem label \hspace{2mm} & Dimensions \hspace{2mm}& $n$ \hspace{2mm}& $F(x)_i$ \hspace{2mm}& Reference \\\hline
    % \item $F:\mathbb{R}^n\to\mathbb{R}^n:F(x)_i = \sqrt{i}(x_i-i)$
    (a)   & $\mathbb{R}^n\to\mathbb{R}^n$ &$10000$ & 
    $\begin{cases}
        2x_1 + \sin(x_1)-1 & (i=1)\\
        -2x_{i-1} + 2x_i + \sin(x_i) -1\\
        2x_n + \sin(2x_n) -1 & (i=n)
    \end{cases}$
    & \cite{li2011class} \\
    (b) & $\mathbb{R}^n\to\mathbb{R}^n$ &$1000000$ & $2x_i-\sin(x_i)$ & \cite{li2011class}\\
    (c) & $\mathbb{R}^n\to\mathbb{R}^n$ &$10000$ &
    $(Ax + 3h^2X - 10h^2)_i$ & \\
    & & & $h = \frac{1}{\sqrt{n}+1};\quad A = B\otimes I_{\sqrt{n}} + I_{\sqrt{n}}\otimes B$; &\\ 
    & & & $B = \begin{pmatrix}
        2   & -1    &       &  \\
        -1  & 2     &\ddots &  \\
            &\ddots &\ddots &-1\\
            &       &-1     &2 
    \end{pmatrix}; 
    \quad  X = \left(x_i^3\right)_i$
    & \cite{li2011class} \\
    (d) & $\mathbb{R}^n\to\mathbb{R}^n$ &$10000$ &
    $\begin{cases}
        \frac{5}{2}x_1 + x_{2} -1 & (i=1)\\
        x_{i-1} + \frac{5}{2}x_i + x_{i+1} -1\\
        x_{n-1} + \frac{5}{2}x_n -1 & (i=n) 
    \end{cases}$
    & \cite{cheng2009prp} \\
    % (e) & $\mathbb{R}^n\to\mathbb{R}^n$ &2500 & $x_i - \frac{1}{n}x_i^2 + \frac{1}{n}\sum_{j=1}^nx_j + i$ & \cite{yan2010globally}\\
    (e) & $\mathbb{R}^n\to\mathbb{R}^n$ &$10000$ & 
    $\begin{cases}
        \frac{1}{3}x_1^3 + \frac{1}{2}x_2^2 & (i=1)\\
        \frac{-1}{2}x_i^2 + \frac{i}{3}x_i^3 + \frac{1}{2}x_{i+1}^2\\
        \frac{-1}{2}x_n^2 + \frac{n}{3}x_n^3& (i=n) 
    \end{cases}$
    & \cite{yan2010globally} \\
    (f) & $\mathbb{R}^n\to\mathbb{R}^n$ &$1000000$ &
    $\begin{cases}
        x_1 - \mathrm{exp}\left(\cos\left(\frac{x_1+x_2}{n+1}\right)\right) & (i=1)\\
        x_i - \mathrm{exp}\left(\cos\left(\frac{x_{i-1}+x_i+x_{i+1}}{n+1}\right)\right) \\
        x_n - \mathrm{exp}\left(\cos\left(\frac{x_{n-1}+x_n}{n+1}\right)\right) & (i=n)
    \end{cases}$
    & \cite{yan2010globally}\\
    \hline
    \end{tabular}
    %\vspace{2mm}
    \caption{The set of monotone equations with related dimensions used in the numerical experiments.}
    \label{tab:mappings_monotone}
\end{table}

\begin{table}[!htb]
    \centering
    \begin{tabular}{|c|c|c|l|c|}
    \hline
Problem label & Dimensions & $n$ & $F(x)_i$ & Reference \\\hline
% \item $F:\mathbb{R}^n\to\mathbb{R}^n:F(x)_i = \sqrt{i}(x_i-i)$
(i)     &$\mathbb{R}^n\to\mathbb{R}^n$      &3000 &$x_i^2-i$ & \cite{dan2002convergence} \\
(ii)    &$\mathbb{R}^{2n}\to\mathbb{R}^n$   &1500 & $x_ix_{i+n}-\sqrt{i}$ & \cite{yin2024modified}\\
(iii)   &$\mathbb{R}^{2n}\to\mathbb{R}^n$   &1500& $(x_i + x_{n+i})(x_i + x_{n+i} - \sqrt{i})$ & \cite{yin2024modified}\\
(iv)    &$\mathbb{R}^{n}\to\mathbb{R}^n$    &3000& $\begin{cases}
    \sqrt{i}\ \mathrm{exp}\left(\frac{x_i + x_{i+1}}{n}\right) & \text{if } i \text{ odd}\\
    \sqrt{i}\ (x_{i-1}+x_{i})(x_{i-1}+x_{i} - 1) \hspace*{5mm}& \text{if } i \text{ even}
\end{cases}$ & \cite{dan2002convergence}\\
(v)     &$\mathbb{R}^{2n}\to\mathbb{R}^n$   &1500& $(3-2x_{2i-1})x_{2i-1}- 2\sin(x_{2i})+1 $ & \cite{yin2024modified}\\
(vi)    &$\mathbb{R}^{3n}\to\mathbb{R}^n$   &1000& $x_{i}x_{n+i}x_{2n+i}-\sqrt[4]{i}$ & \cite{yin2024modified}\\
\hline
\end{tabular}
    \caption{Nonlinear square and rectangular mappings used in the numerical experiments.}
    \label{tab:mappings}
\end{table}

%%%%%%%%%%%%%%%%%%%%%%%%%%%%%%%%%%%%%%%%%%%%%%%%%%%%%%%%%%%%%%%%%%%%%%%%%%%%%%%%%%%%%%%%%
%%%%%%%%%%%%%%%%%%%%%%%%%%%%%%%%%%%%%%%%%%%%%%%%%%%%%%%%%%%%%%%%%%%%%%%%%%%%%%%%%%%%%%%%%
	
% ~~~~~~~~~~~~~~~~~~~~~~~~~~~~~~~~~~~~~~~~~~~~~~~~~~~~~~~~~~~~~~~~~~~~~~~~~~~~~~~~~~~~~~~~~~~~~~~~~

%\ifarxiv
%    \bibliographystyle{plain}
%\else
%    \phantomsection
%    \addcontentsline{toc}{section}{References}
%    \bibliographystyle{spmpsci}
%\fi
\bibliographystyle{plain}
\bibliography{Bibliography}

\end{document}